\documentclass[reqno,10pt,a4paper]{amsart}
\pagestyle{headings}

\usepackage{amsmath}
\usepackage{amssymb}
\usepackage{latexsym}
\usepackage{epsf}
\usepackage{graphics}
\usepackage{tikz}
\usetikzlibrary{decorations.pathreplacing}

\usepackage[dvips]{rotating}

\usepackage[all]{xy}
\usepackage{multicol}

\UseComputerModernTips
\CompileMatrices

\setlength{\oddsidemargin}{0.5cm}
\setlength{\evensidemargin}{0.5cm}
\setlength{\textwidth}{15cm}
\setlength{\textheight}{23cm}

\DeclareMathOperator{\Hom}{Hom}
\DeclareMathOperator{\Ext}{Ext}

\renewcommand{\ge}{\geqslant}
\renewcommand{\le}{\leqslant}

\newcommand{\C}{\mathbb{C}}

\newcommand{\N}{\mathbb{N}}
\newcommand{\R}{\mathbb{R}}

\newcommand{\Z}{\mathbb{Z}}

\DeclareMathOperator{\sgn}{sgn}

\newcommand{\half}{\textstyle\frac{1}{2}}

\newcommand{\ee}{\varepsilon}

 %%% (pre)standard module for affine
\newcommand{\Sb}{{S}} %%% (pre)standard module for double blob
  %%% Gram det
 
\newcommand{\dd}{\delta}
\newcommand{\dl}{\delta_L}
\newcommand{\dr}{\delta_R}
\newcommand{\kl}{\kappa_L}
\newcommand{\kr}{\kappa_R}
\newcommand{\kk}{\kappa_{LR}}
\newcommand{\twoBTL}{\mathrm{2BTL}}

\hyphenation{co-deter-min-ant co-deter-min-ants pa-ra-met-rised
pre-print pro-pa-gat-ing pro-pa-gate
fel-low-ship Cox-et-er dis-trib-ut-ive}

\DeclareMathOperator{\uri}{{ur}_1}
\DeclareMathOperator{\uro}{{ur}_0}

\newcommand{\W}[2]{W^{#1}_{#2}}  %% DN cell module (added by ppm 23/6/24)

\newcommand\WW{\W{n,m}{\ee_1, \ee_2}}
\newcommand\WWB{B^{n,m}_{\ee_1, \ee_2}}  %% canonical basis of WW
\newcommand\Wb{\W{n}{}(b)}
\newcommand{\WWGam}{\Gamma^{n,m}_{\ee_1,\ee_2}} %% canonical cell mod gram det
\newcommand{\Gramdet}[2]{\Gamma^{#1}_{#2}}  %% cell mod gram det
\newcommand{\Gramcon}{G^{n,m}_{\ee_1,\ee_2}} %% canonical cell mod gram matrix
\newcommand{\Gram}[2]{G^{#1}_{#2}}  %% cell mod gram matrix

\newcommand{\HCn}{H(\tilde{C}_{n})}  %%  
\newcommand{\bnp}{b'_n }  %% 
\newcommand{\beq}{\begin{equation}}
\newcommand{\eq}{\end{equation}}

\newcommand{\bluex}[1]{}

\newcommand{\redx}[1]{}
\newcommand{\bnx}{{b_n^x}}

\newcommand{\de}{d} %% hook for special diagrams
\newcommand{\we}[2]{\frac{#1}{#2}} %% notation for DN-weight
\newcommand{\thetam}{-m + \ee_1 w_1 +\ee_2 w_2} %% good value of \theta

\newcommand{\mat}[1]{\left( \begin{array}{#1}}
\newcommand{\tam}{ \end{array} \right)}

\newcommand{\dmpone}{  
\tikz[scale=0.5,baseline={(0,0.4)}]{
      \draw (1,2) to[out=-90,in=-90] node[pos=0.5]{$\bullet$} +(1,0);
      \draw (3,2) to[out=-90,in=-90] node[pos=0.5]{$\bullet$} +(1,0);
      \draw (1,0) to[out=90,in=90] node[pos=0.5]{$\bullet$} +(1,0);
      \draw (3,0) to[out=90,in=90] node[pos=0.5]{$\bullet$} +(1,0);
      \draw (2.5,1) node {$\cdots$};
      \draw (4.8,0) to (4.8,2);
      \draw (5.5,1) node {$\cdots$};
      \draw (6,0) to (6,2);
      \draw (0.5,2) rectangle (6.5,0);
      \draw
          [decorate,decoration={brace,amplitude=4pt,mirror},yshift=-4pt]
          (4.8,0) -- (6,0) node [midway,yshift=-10pt] {${}^{m+1}$};}
}

\newcommand{\dmmone}{
\tikz[scale=0.5,baseline={(0,0.4)}]{
      \draw (1,2) to[out=-90,in=-90] node[pos=0.5]{$\bullet$} +(1,0);
      \draw (3,2) to[out=-90,in=-90] node[pos=0.5]{$\bullet$} +(1,0);
      \draw (1,0) to[out=90,in=90] node[pos=0.5]{$\bullet$} +(1,0);
      \draw (3,0) to[out=90,in=90] node[pos=0.5]{$\bullet$} +(1,0);
      \draw (2.5,1) node {$\cdots$};
      \draw (4.8,0) to node[pos=0.5]{$\bullet$} (4.8,2);
      \draw (5.3,0) to (5.3,2);
      \draw (6,1) node {$\cdots$};
      \draw (6.5,0) to (6.5,2);
      \draw (0.5,2) rectangle (7,0);
      \draw
          [decorate,decoration={brace,amplitude=4pt,mirror},yshift=-4pt]
          (5.3,0) -- (6.5,0) node [midway,yshift=-10pt] {${}^{m}$};}
}

\newcommand{\dppone}{
\tikz[scale=0.5,baseline={(0,0.4)}]{
      \draw (1,2) to[out=-90,in=-90] node[pos=0.5]{$\bullet$} +(1,0);
      \draw (3,2) to[out=-90,in=-90] node[pos=0.5]{$\bullet$} +(1,0);
      \draw (1,0) to[out=90,in=90] node[pos=0.5]{$\bullet$} +(1,0);
      \draw (3,0) to[out=90,in=90] node[pos=0.5]{$\bullet$} +(1,0);
      \draw (2.5,1) node {$\cdots$};
      \draw (4.8,0) to (4.8,2);
      \draw (5.5,1) node {$\cdots$};
      \draw (6,0) to (6,2);
      \draw (6.5,0) to node[pos=0.5,white]{$\bullet$} node[pos=0.5]{$\circ$} (6.5,2);
      \draw (0.5,2) rectangle (7,0);
      \draw
          [decorate,decoration={brace,amplitude=4pt,mirror},yshift=-4pt]
          (4.8,0) -- (6,0) node [midway,yshift=-10pt] {${}^{m}$};}
}

\newcommand{\dpmmone}{
\tikz[scale=0.5,baseline={(0,0.4)}]{
      \draw (1,2) to[out=-90,in=-90] node[pos=0.5]{$\bullet$} +(1,0);
      \draw (3,2) to[out=-90,in=-90] node[pos=0.5]{$\bullet$} +(1,0);
      \draw (1,0) to[out=90,in=90] node[pos=0.5]{$\bullet$} +(1,0);
      \draw (3,0) to[out=90,in=90] node[pos=0.5]{$\bullet$} +(1,0);
      \draw (2.5,1) node {$\cdots$};
      \draw (4.8,0) to node[pos=0.5]{$\bullet$} (4.8,2);
      \draw (5.3,0) to (5.3,2);
      \draw (6,1) node {$\cdots$};
      \draw (6.5,0) to (6.5,2);
      \draw (7,0) to node[pos=0.5,white]{$\bullet$} node[pos=0.5]{$\circ$} (7,2);
      \draw (0.5,2) rectangle (7.5,0);
      \draw
          [decorate,decoration={brace,amplitude=4pt,mirror},yshift=-4pt]
          (5.3,0) -- (6.5,0) node [midway,yshift=-10pt]
          {${}^{m-1}$};}
}

\newcommand{\epsfboxd}[1]{\epsfbox{#1}}

\newcommand{\NN}{\N_0} 
\newcommand{\lrb}{left-right blob}
\newcommand{\Bx}{B^{x'}}  % lets be  consistent with mgp1 and mgp3
\newcommand{\Bxx}{B^{x}}% lets be  consistent with mgp1 and mgp3
\newcommand{\bx}{b^{x}}% lets be  consistent with mgp1 and mgp3
\newcommand{\ddd}{\pmb{\delta}}% lets be  consistent with mgp3

\newcommand{\basis}{\beta}  %% hook for basis for cell modules

\newcommand{\params}{\dd,\dl,\dr,\kl,\kr,\kk }

\newcommand{\PPi}{\Pi}   %% the path basis hook 

\newcommand{\axioma}{Fix $\ddd \in k^6$ except for $\kk$,
  generic, but 
       so that $w_1,w_2$ are defined.}
\newcommand{\axiomb}{Fix ${\we{n,m}{\ee_1,\ee_2}} \in \Lambda^+_n$.}
\newcommand{\PP}{{\mathcal P}}  %% hook for set of paths cf M91
\newcommand{\po}{p_0}     %% fund path

\newlabel{prop:pres}{{1.2.2}{4}}
\newlabel{prop:cellular1}{{1.3.1}{6}}
\newlabel{sect:params}{{2}{7}}
\newlabel{ss:gram}{{2.1}{8}}
\newlabel{ppm01}{{3}{9}}
\newlabel{eq:p2}{{2}{9}}
\newlabel{eq:p3}{{3}{9}}
\newlabel{ppm1}{{4}{10}}
\newlabel{sblob gram1}{{4}{11}}
\newlabel{ourpara}{{2.2}{12}}
\newlabel{sect:organ}{{3}{12}}
\newlabel{ss:pswap}{{3.1}{13}}
\newlabel{bxeiso}{{3.1.1}{13}}
\newlabel{bxfiso}{{3.1.2}{13}}
\newlabel{fonlands}{{3.1.3}{14}}
\newlabel{subsect:globalparam}{{3.2}{14}}
\newlabel{sect:homs}{{4}{15}}
\newlabel{fig:orbit2}{{5}{16}}
\newlabel{thm:homblob}{{4.1.1}{16}}
\newlabel{thm:hom1}{{4.1.2}{17}}
\newlabel{thm:hom1glob1}{{4.1.4}{17}}
\newlabel{thm:hom2}{{4.1.5}{18}}
\newlabel{thm:hom2glob1}{{4.1.6}{18}}
\newlabel{thm:hom2glob3}{{4.1.7}{18}}
\newlabel{lem:M}{{4.2.1}{20}}
\newlabel{fig:nonzero}{{6}{21}}
\newlabel{thm:hom3}{{4.2.3}{21}}
\newlabel{lem:efzero}{{4.2.4}{21}}
\newlabel{eq23}{{5}{23}}
\newlabel{lem:Djzero}{{4.2.5}{23}}
\newlabel{fig:Dfthree}{{4.2}{24}}
\newlabel{eq25}{{6}{24}}
\newlabel{eqdzero}{{7}{25}}
\newlabel{lem:DzeroD'}{{4.2.6}{25}}
\newlabel{fig:fig22}{{8}{26}}
\newlabel{fig:lem52}{{9}{27}}
\newlabel{lem:52}{{4.2.7}{27}}
\newlabel{lem:CDzero}{{4.2.8}{27}}
\newlabel{fig:cdzerolines}{{10}{27}}
\newlabel{fig:cdzero}{{11}{28}}
\newlabel{fig:cdzeromirror}{{12}{28}}
\newlabel{thm:hom3glob1}{{4.2.11}{29}}
\newlabel{thm:hom3glob2}{{4.2.12}{29}}
\newlabel{lem:hddef}{{4.3.1}{30}}
\newlabel{hcnonzero}{{8}{31}}
\newlabel{thm:hom4}{{4.3.2}{31}}
\newlabel{fig:Dnodec2}{{13}{32}}
\newlabel{fig:Dnodec4}{{14}{33}}
\newlabel{thm:hom4glob1}{{4.3.5}{34}}
\newlabel{sect:poset}{{5}{35}}
\newlabel{subsect:onedim}{{5.1}{35}}
\newlabel{standardsimple}{{5.1.1}{35}}
\newlabel{lem:noext}{{5.1.2}{35}}
\newlabel{fourthree}{{5.2}{36}}
\newlabel{notcompfactor}{{5.2.1}{36}}
\newlabel{thm:poset}{{5.2.3}{38}}
\newlabel{app}{{A}{40}}
\newlabel{lem:bracket}{{A.1.1}{40}}
\newlabel{eq24}{{9}{40}}
\newlabel{lem:cd1}{{A.2.1}{41}}
\newlabel{lem:cd2}{{A.2.2}{41}}
\newlabel{lem:cd3}{{A.2.3}{42}}
\newlabel{apptlinfty}{{B}{43}}
\newlabel{fig:Dnodec1}{{15}{43}}

\begin{document}
\theoremstyle{plain}
\newtheorem{thm}{Theorem}[section]
\newtheorem{prop}[thm]{Proposition}
\newtheorem{cor}[thm]{Corollary}
\newtheorem{clm}[thm]{Claim}
\newtheorem{lem}[thm]{Lemma}
\newtheorem{conj}[thm]{Conjecture}
\theoremstyle{definition}
\newtheorem{defn}[thm]{Definition}
\newtheorem{rem}[thm]{Remark}
\newtheorem{eg}[thm]{Example}
\newtheorem*{rem*}{Remark}

\title%
[Blocks for the symplectic blob algebra over the complex field]%
{Blocks for \\ the symplectic blob algebra over the complex field}
\author{O. H. King \and  P. P. Martin
\and A. E. Parker} 
\email{oking@dhfs.uk} 
\address{Department of Mathematics \\ University of Leeds \\ Leeds,
  LS2 9JT \\ UK}
\email{p.p.martin@leeds.ac.uk}
\email{A.E.Parker@leeds.ac.uk}

\subjclass[2010]{05E15, 16G99, 20G05 and 20G08}

\begin{abstract}
The \emph{symplectic blob algebra} is a 
physically motivated quotient of the Hecke algebra  
$H(\tilde{C}_n)$
with a diagram calculus. 
We find the blocks for the symplectic blob algebra for all
specialisations of its parameters over the complex numbers. 
We determine Gram
determinants for the cell modules
with respect to a canonical contravariant form. 
We  show in particular that
the algebra is semisimple over the complex numbers unless at least
one of the ``quantisation'' parameters, 
or the sum or difference of two of these 
parameters is integral, 
or the bulk parameter $q$ is a root of unity. 
We 
find decomposition numbers in many of the $q$-generic cases.
\end{abstract}

\maketitle

\section*{Introduction}\label{intro}

The \emph{symplectic blob algebra}, $\bnx$, introduced in
\cite{gensymp},
is a quotient of the Hecke algebra $H(\tilde{C}_n)$ (see for instance  
\cite{humph2}, or Definition \ref{de:HeC} below).
It is of interest from a representation theory perspective 
both formally 
(i.e. in representation Theory, in the sense of \cite[\S 5.1]{schiffler}), 
and combinatorially. 
In statistical mechanics it 
controls boundary conditions in computation
for various important lattice models
(see e.g. \cite{degiernichols,gensymp}
and cf. \cite{Baxter81,marbk,martsaleur}). 
It links to established
objects of current study such as the blob algebra
\cite{Plaza,bowmancoxspeyer,LibedinskyPlaza}, 
Hecke algebras \cite{daughram,humph2},
KLR algebras \cite{kholaud,rouquier,brundankleshchev},
and Lie theory \cite{brauerstroppelehrig,jantz,marwood98}. 
Our focus in this paper is the computation
of fundamental invariants and the role of alcove geometry 
(confer
\cite{jantz,CoxDevisscherMartin0609,marwood98,blobcgm,Soergel97a,MartinWoodcock03}).

We may define $\bnx$ using a basis of diagrams which
can be thought of loosely as type-$\tilde{C}_n$ Temperley-Lieb diagrams.
These are obtained by suitably stacking
the `decorated' generators shown in Figure~\ref{fig:1}
(see \cite[\S6]{gensymp} or \S\ref{sect:notn} below for details).

\begin{figure}[ht]
\begin{multline*} 
e:=\raisebox{-0.4cm}{\epsfboxd{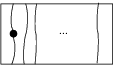}},\  
e_1:=\raisebox{-0.4cm}{\epsfboxd{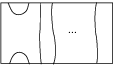}},\ 
e_2:= \raisebox{-0.4cm}{\epsfboxd{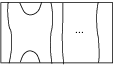}},\ 
\cdots,\ \\
e_{n-1}:=\raisebox{-0.4cm}{\epsfboxd{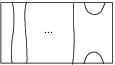}},\ 
f:=\raisebox{-0.4cm}{\epsfboxd{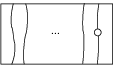}}. 
\end{multline*} 
\caption{Generating diagrams for the symplectic blob algebra. \label{fig:1}}
\end{figure}

\noindent
The algebra $\bnx$ is defined over any commutative ring $k$ 
containing  a 6-tuple
$\ddd= ( \params )$
of `straightening parameters'. 
Since we 
study representation theory, 
our aim is primarily the Artinian cases, 
and indeed the cases where $k$ is an algebraically closed field. 
Thus for each such $k$ there is an algebra $\bnx$ for each point in $k^6$.
One knows \cite{gensymp} 
(and see \S\ref{sec:gram})
that the non-semisimple cases 
lie on certain algebraic sets. 
The generic semisimple cases are well-understood \cite{gensymp}, 
so it is the points on the algebraic sets that are of
interest.

It turns out that 
the dependence of representation theory on position in the
variety is more easily described in terms of alternative
variety-specific parameterisations. 
In these the non-semisimple sub-varieties correspond to integrality of
some or all of the new parameters, as we shall see in section~\ref{sec:hom}
(see also \cite{martsaleur,degiernichols}). 
In particular we introduce the bulk parameter $q$ (where $\dd =
q+q^{-1}$) and quantisation
parameters $w_1$, $w_2$ (derived from $\dl$ and $\dr$).
\label{sec:qdef} 

In \cite{gensymp} 
various general properties of the algebra $\bnx$ are established. 
For instance 
 a cellular basis is constructed; 
its generic semisimple structure over $\C$ is determined; 
and it is shown to be  quasi-hereditary 
on an open subvariety of the  non-semisimple variety.
Full tilting modules are constructed in \cite{reeves2}.
An efficient presentation is found in \cite{mgp2I};
and in  \cite{degiernichols} a closely related algebra is studied,
leading to useful alternative bases for certain cell modules, which
are crucial to our calculation of the action of a certain special central
element.

It follows from comparison with 
the ordinary blob algebra case 
\cite{blobcgm} 
that the programme of study of the non-generic
non-semisimple representation theory of $\bnx$ is a considerably
harder challenge. 
As in  \cite{blobcgm}, however, 
a key component is to construct `enough' standard
module morphisms; and  these were  constructed  in \cite{mgp3}.
This paper, using the morphisms from \cite{mgp3}  and 
also using \cite{degiernichols},
investigates the sufficiency of this set.

Quite generally, 
if there is a non-zero homomorphism between two standard modules, then the
two modules  belong to the same block. 
Indeed,  
determination of {\em all} homomorphisms between standard modules  
in a quasi-hereditary structure 
 allows a complete description of the blocks (see the appendix). 
Our main block result in this paper, Theorem \ref{thm:blocksbnx}
is a complete description of blocks over the complex numbers.

\begin{figure}[ht]
  \centering
  \begin{tikzpicture}[scale=0.2,>=latex,baseline=0] 
    \pgfmathsetmacro{\w}{3}
    \pgfmathsetmacro{\ww}{1}
    \draw (0,-16)--(0,16);
    \draw (-16,0)--(16,0);
    \foreach \m in {0,2,4,6,8,10,12,14,16}
    \foreach \e in {-1,1}
    \foreach \f in {-1,1}
    {       
      \draw (\e * \m - \e * \e * \w - \e * \f * \ww,\f * \m - \f * \e * \w - \f * \f * \ww)  node {$\bullet$};
    }
    \fill[white] (0 -\w + \ww, 0 + \w - \ww) circle (15pt);
    \fill[white] (0 -\w - \ww, 0 + \w - \ww) circle (15pt); 
    \draw (0 - \w - \ww, 0 - \w - \ww) -- ++(16,16) node[anchor=south west] {$+,+$};
    \draw (-2 - \w + \ww, 2 + \w - \ww) -- ++(-14,14) node[anchor=south east] {$-,+$};
    \draw (2 - \w + \ww, -2 + \w - \ww) -- ++(14,-14) node[anchor=north west] {$+,-$};
    \draw (-2 - \w - \ww, -2 - \w - \ww) -- ++(-14,-14) node[anchor=north east] {$-,-$};

    \draw[dashed,->] (0,0) .. controls (-3,0) and (0,3) .. (0,0);
    \draw[dashed,->] (6 - \w - \ww, 6 - \w - \ww) -- (6 - \w - \ww,-6 + \w + \ww);
    \draw[dashed,->] (6 - \w - \ww, 6 - \w - \ww) to[out=-90,in=0] (-6 + \w + \ww, -6 + \w + \ww);
    \draw[dashed,->] (8 - \w - \ww, 8 - \w - \ww) -- (8 - \w - \ww,-8 + \w + \ww);
    \draw[dashed,->] (8 - \w - \ww, 8 - \w - \ww) -- (-8 + \w + \ww,8 - \w - \ww);
    \draw[dashed,->] (8 - \w - \ww, 8 - \w - \ww) to[out=180,in=90] (-8 + \w + \ww, -8 + \w + \ww);

    \draw[dashed] (10 - \w - \ww, 10 - \w - \ww) rectangle (-10 + \w + \ww,-10 + \w + \ww);
    \draw[dashed] (12 - \w - \ww, 12 - \w - \ww) rectangle (-12 + \w + \ww,-12 + \w + \ww);
    \draw[dashed] (14 - \w - \ww, 14 - \w - \ww) rectangle (-14 + \w + \ww,-14 + \w + \ww);
    \draw[dashed] (16 - \w - \ww, 16 - \w - \ww) rectangle (-16 + \w + \ww,-16 + \w + \ww);

    \draw[dashed,->] (-10 - \w - \ww, -10 - \w - \ww) -- (10 + \w + \ww, -10 - \w - \ww);
    \draw[dashed,->] (-10 - \w - \ww, -10 - \w - \ww) -- (-10 - \w - \ww, 10 + \w + \ww);

    \draw[dashed,->] (-12 - \w - \ww, -12 - \w - \ww) -- (-12 - \w - \ww, 12 + \w + \ww);
    \draw[dashed,->] (-14 - \w - \ww, -14 - \w - \ww) -- (-14 - \w - \ww, 14 + \w + \ww);
  \end{tikzpicture}
  \caption{Graphical depiction of morphisms and reflection orbits for 
the cell modules of $b^x_{13}$ 
with quantisation parameters $w_1=3$ and $w_2=1$.}\label{fig:w1andw2-intro}
\end{figure}

The homomorphisms found in \cite{mgp3} are not shown there to be a complete set,
so only give a lower bound on the size of blocks. 
However
these results combined with a result about the action of certain
central elements on the standard modules 
allow us to 
obtain an {\em upper} bound on the size of blocks.
The homomorphisms (along with some restriction results to the blob
algebra) then allow a complete characterisation of the blocks.

Algebras related to towers of recollement \cite{cmpx} often have a 
geometric linkage principle, describing their blocks  
in terms of an alcove geometry on some Euclidean `weight' space, 
similar to that seen in Lie theory 
\cite{jantz,ander1}.  
In some cases the link with Lie theory is direct Ringel duality
\cite{dlabring2,ringel2,Martin92}, 
and in others it can be intriguingly less direct
(cf. e.g. 
\cite{marwood00,marwood98,CoxDevisscherMartin0609,brauerstroppelehrig}). 
A uniform recipe for this is not yet known. 
In such characterisations there are two
challenges that vary in difficulty. 
A fundamental one is the complexity of the underlying weight
space and arrangement of reflection hyperplanes. 
Then  
there is 
the `(parabolic) Kazhdan-Lusztig polynomial aspect' 
\cite{Soergel97a,Soergel97b,Deodhar94}: 
determining which
reflections between weights correspond to homomorphims between modules. 
To illustrate these points, we refer first to the
Temperley-Lieb and blob algebras over $\C$. 
Both  have 
$\R$ as the   
underlying space. 
The reflection hyperplanes are
also easily described 
(see \cite[Ch.7]{marbk} and \cite{marwood00} respectively for the in-depth
results). 
However in the case of the former only reflections of
weights through the adjacent hyperplanes correspond to non-zero
homomorphisms, whereas in the latter we have homomorphisms coming from
all reflections. 
At the other end of the spectrum we have the Brauer
algebra, where the underlying space and alcove geometry is much more
complicated, as well as the correspondence between the reflections and
the representation theory of the algebra. 
In the process of studying the symplectic
blob algebra, we hope to obtain an example 
which sheds light on the general phenomenon indicated by 
these two extremes. 
As indicated by Figure
\ref{fig:w1andw2-intro}, 
taken from Section~\ref{sec:w1andw2}, 
this paper does indeed report progress on this front.

\medskip

The paper is structured as follows. 
We give a brief review of notation together with an index in section \ref{sect:notn},
and of the construction of cell modules in \S\ref{sect:cell}.
In \S\ref{sec:repara} we discuss the role of the ground ring. 
In \S\ref{sec:dnpath} we review the De Gier--Nichols path basis of
cell modules. The first main theorems are in \S\ref{sec:necblocks},
which 
gives conditions for two cell modules to be in the same block. In
\S\ref{sec:gram} we compute Gram determinants, and in
\S\ref{sec:decompblocks}--\ref{sec:wnb} the main theorems on
decomposition matrices and geometric characterisation of blocks are
given.

\section{Notation and preliminary definitions} \label{sect:notn}

Let $k$ be a field and 
\begin{equation} \label{eq:defddd}
\ddd = (\dd,\dl,\dr,\kl,\kr,\kk ) \in k^6 .  
\end{equation}

\noindent 
Let $\NN$ denote the natural numbers including $0$. 
Let $n,m \in \NN$.

\noindent
The set $\Bx_n$ of 
 {\em \lrb\ pseudo-diagrams} \cite{mgp3} may be defined as    
follows.
Consider the set of decorated Temperley--Lieb diagrams on $n$ strings
in Figure \ref{fig:1} 
(as usual for Temperley--Lieb diagrams, 
isotopic pictures are identified). 
Then $\Bx_n$ is 
the set of pictures (up to isotopy) obtained by stacking such pictures. 
Write $d|d'$ for diagram $d$ stacked over diagram $d'$.

\begin{table}[ht]
$$
\begin{tabular}{|c|c|c|c|c|c|c|c|}
\hline
$\raisebox{-0.2cm}{\epsfbox{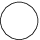}}$ %undecorated loop 
&$\raisebox{-0.6cm}{\epsfbox{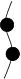}}$ %two consecutive $L$'s (black blobs)
&$\raisebox{-0.6cm}{\epsfbox{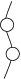}}$ %two consecutive $R$'s (white blobs)
&$\raisebox{-0.2cm}{\epsfbox{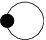}}$ %loop decorated with an $L$ (black blob)
&$\raisebox{-0.2cm}{\epsfbox{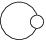}}$ %loop decorated with an $R$ (white blob)
&$\raisebox{-0.2cm}{\epsfbox{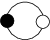}}$ %loop decorated with a $LR$ pair 
&$\raisebox{-0.6cm}{\epsfbox{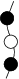}}$ %an $LRL$ string (black, white, black blob)
&$\raisebox{-0.6cm}{\epsfbox{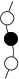}}$ %an $RLR$ string ( white, black, white blob)
\\
\hline
\end{tabular}
\qquad
\qquad
\raisebox{-.63cm}{\epsfbox{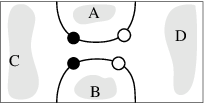}}
$$
\caption{
Some features appearing in diagrams in the set $\Bx_n$.
\label{blobtab0}}
\end{table}

\begin{table}[ht]
$$
\begin{tabular}{|c|c|c|c|c|c|c|c|}
\hline
$\raisebox{-0.2cm}{\epsfbox{loop.eps}}$ %undecorated loop 
&$\raisebox{-0.6cm}{\epsfbox{l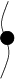}}$ %two consecutive $L$'s (black blobs)
&$\raisebox{-0.6cm}{\epsfbox{r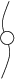}}$ %two consecutive $R$'s (white blobs)
&$\raisebox{-0.2cm}{\epsfbox{lloop.eps}}$ %loop decorated with an $L$ (black blob)
&$\raisebox{-0.2cm}{\epsfbox{rloop.eps}}$ %loop decorated with an $R$ (white blob)
&$\raisebox{-0.2cm}{\epsfbox{lrloop.eps}}$ %loop decorated with a $LR$ pair 
&$\raisebox{-0.6cm}{\epsfbox{lrlline.eps}}$ %an $LRL$ string (black, white, black blob)
&$\raisebox{-0.6cm}{\epsfbox{rlrline.eps}}$ %an $RLR$ string ( white, black, white blob)
\\
\hline
$\delta$ %undecorated loop 
& $\dl \raisebox{-0.6cm}{\epsfbox{lline.eps}}$ 
& $\dr \raisebox{-0.6cm}{\epsfbox{rline.eps}}$ 
& $\kl$
& $\kr$
& $\kk$
& $\kk\raisebox{-0.6cm}{\epsfbox{lline.eps}}$ 
& $\kk\raisebox{-0.6cm}{\epsfbox{rline.eps}}$ \\%$R$\\
\hline
\end{tabular}
$$
\caption{Table encoding  
straightening relations for  $b^{x}_n$.\label{blobtab}}
\end{table}

Let
$\Bxx_n$ denote the subset of   $\Bx_n$   
excluding diagrams with features as in  
Table~\ref{blobtab0}.
Given  
$d \in \Bx_n$, an element $f(d)$ of $k \Bxx_n$ is
obtained by
applying the straightening relations encoded in Table~\ref{blobtab} 
(the feature on the top is replaced by the 
given scalar multiple of the feature beneath) 
and  
 the ``topological relation'':  
\begin{equation} \label{topquot}
\kappa_{LR} \;\; \raisebox{-0.2in}{\epsfbox{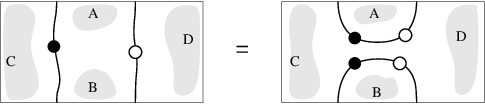}}
\end{equation}
(where each labelled shaded area is a subdiagram without propagating lines)
until such operations are exhausted. 
It is shown in \cite{mgp3} that $f(d)$ does not depend on the details
(i.e. we have confluence in a Bergman diamond sense). 
Thus we have in particular a well-defined map 
$\Bxx_n \times \Bxx_n \rightarrow k \Bxx_n$
given by 
\begin{equation} \label{de:fdd}
(d,d') \mapsto f(d|d') .
\end{equation}

\begin{defn} 
Fix $k$ and $\ddd \in k^6$. 
Then the {\em symplectic blob algebra}
$\bx_n    = \bx_n(\ddd)  $ is the $k$-algebra 
with basis
$\Bxx_n$, and 
multiplication as in (\ref{de:fdd}).
\end{defn}

For example, 
consider the poset $(\Lambda_n , \prec )  $ given in Fig.\ref{fig:cmwl}(a)
and the elements $\de_l$ ($l \in \Lambda_n$) of  $\bx_n$ as indicated
in Figure \ref{fig:cmwl2},
where in particular
  \begin{align*}
    d_0 = 
 \tikz[scale=0.5,baseline={(0,0.4)}]{
      \draw (1,2) to[out=-90,in=-90] node[pos=0.5]{$\bullet$} +(1,0);
      \draw (3,2) to[out=-90,in=-90] node[pos=0.5]{$\bullet$} +(1,0);
      \draw (1,0) to[out=90,in=90] node[pos=0.5]{$\bullet$} +(1,0);
      \draw (3,0) to[out=90,in=90] node[pos=0.5]{$\bullet$} +(1,0);
      \draw (2.5,1) node {$\cdots$};
      \draw (4.5,0) to[out=90,in=90] node[pos=0.5]{$\bullet$} +(1,0);
      \draw (4.5,2) to[out=-90,in=-90] node[pos=0.5]{$\bullet$} +(1,0);
      \draw (0.5,2) rectangle (6,0);}\text{~~~$\;$ if $n$ is even, }
\qquad
    d_0&=  
 \tikz[scale=0.5,baseline={(0,0.4)}]{
      \draw (1,2) to[out=-90,in=-90] node[pos=0.5]{$\bullet$} +(1,0);
      \draw (3,2) to[out=-90,in=-90] node[pos=0.5]{$\bullet$} +(1,0);
      \draw (1,0) to[out=90,in=90] node[pos=0.5]{$\bullet$} +(1,0);
      \draw (3,0) to[out=90,in=90] node[pos=0.5]{$\bullet$} +(1,0);
      \draw (2.5,1) node {$\cdots$};
      \draw (4.5,0) to[out=90,in=90] node[pos=0.5]{$\bullet$} +(1,0);
      \draw (4.5,2) to[out=-90,in=-90] node[pos=0.5]{$\bullet$} +(1,0);
      \draw (6,0) to node[pos=0.5]{$\bullet$} (6,2);
      \draw (0.5,2) rectangle (6.5,0);}\text{~~~$\;$ if $n$ is odd.}
  \end{align*}
Verification of the following is a simple exercise in $\bx_n$ arithmetic.

\begin{prop}[\cite{gensymp}]
The ideals of $\bx_n$ generated by the elements $\de_l$ in Figure \ref{fig:cmwl2}
include according to the 
poset structure indicated by the Hasse diagram in Figure \ref{fig:cmwl}.
\qed
\end{prop}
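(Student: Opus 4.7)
The plan is to verify, for each covering relation $l' \prec l$ in the Hasse diagram of Figure~\ref{fig:cmwl}, that $\de_{l'}$ lies in the two-sided ideal generated by $\de_l$. Concretely, I would exhibit for each such covering a pair of basis diagrams $a,b \in \Bxx_n$ (or short words in the generators $e,f,e_1,\ldots,e_{n-1}$ of Figure~\ref{fig:1}) together with the claim that $a \cdot \de_l \cdot b$ reduces, via the straightening relations encoded in Tables~\ref{blobtab0}--\ref{blobtab} and the topological relation~(\ref{topquot}), to $\de_{l'}$ (possibly multiplied by a formal monomial in the parameters $\ddd$, which is absorbed by a slight adjustment of $a$ or $b$ or is simply tracked).

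The first step is to classify the coverings. In the usual $\tilde C_n$-type picture, the labels in $\Lambda_n$ record the number of propagating lines together with a small amount of decoration data (typically a $\pm$ tagging the blob status of the left-most and right-most propagating line), and the element $\de_l$ is a half-diagram realisation of that data of the shape indicated in the displayed formula for $d_0$. A covering is then of one of two types: (i) decrease the propagating line count by $2$; (ii) toggle a blob decoration on an exposed propagating line.

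For type (i), I would conjugate $\de_l$ by a suitable interior cup-cap pair, i.e.\ multiply on top and bottom by some $e_i$ positioned so that two adjacent propagating endpoints of $\de_l$ get pinched off into cups and caps; after isotopy one obtains $\de_{l'}$, at worst multiplied by a loop scalar ($\delta$, $\dl$, $\dr$, $\kl$, $\kr$ or $\kk$) produced by Table~\ref{blobtab}. For type (ii), multiplication by the appropriate boundary generator $e$ or $f$ adds or deletes a blob on the left-most or right-most propagating line, again reducing to $\de_{l'}$ up to a parameter monomial. The Hasse diagram has only finitely many coverings, and each required diagrammatic identity is a short, local computation; the transitive closure then gives the full poset containment.

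The only real obstacle is bookkeeping. One must be careful that the intermediate stacked diagrams genuinely lie in $\Bx_n$ and, after straightening, do not collapse to $0$ through one of the forbidden features of Table~\ref{blobtab0}; and one must keep track of the scalars pulled out so as to see that what remains is exactly $\de_{l'}$ rather than some unrelated basis element of lower propagating number. Because the parameters are formal and no reduction produces the offending features for the specific choices of $a,b$ dictated by the shape of $\de_l$, this amounts to the routine $\bx_n$ arithmetic the authors refer to, and completes the verification.
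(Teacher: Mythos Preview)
Your approach is correct and is precisely the ``simple exercise in $\bx_n$ arithmetic'' the paper alludes to; the paper gives no argument beyond that phrase and the citation to \cite{gensymp}.

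One caveat on orientation: you have the poset direction reversed. In Figure~\ref{fig:cmwl} the label $0$ is the \emph{maximal} element and $\de_0=d_0$ (no propagating lines) generates the \emph{smallest} ideal, while $-n$ is minimal and $\de_{-n}$ generates the whole algebra. So for a covering $l'\prec l$ one needs $\de_l\in(\de_{l'})$, not $\de_{l'}\in(\de_l)$ as you wrote. Your actual operations---pinching off propagating lines with an $e_i$, toggling boundary decorations with $e$ or $f$---go from a diagram with more propagating structure to one with less, which is exactly the right computation; only the roles of $l$ and $l'$ in your write-up are swapped relative to the paper's convention. With that relabelling the outline is complete.
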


Let $I_n(0)$ denote the ideal generated by $d_0$.
Define $b_n'$ as the quotient algebra by this ideal.

\begin{figure} 
$$
(\Lambda_n , \prec ) \;\; = \;  
\begin{array}{c}
\xymatrix@R=8pt@C=4pt{
&0 \ar@{-}[dl] \ar@{-}[dr]& \\
1 \ar@{-}[d]\ar@{-}[drr] & &-1 \ar@{-}[d]\ar@{-}[dll]\\
2 \ar@{-}[d]\ar@{-}[drr] & &-2 \ar@{-}[d]\ar@{-}[dll]\\
{\genfrac{}{}{0pt}{}{\vdots}{\vdots}} \ar@{-}[d]\ar@{-}[drr] & 
&{\genfrac{}{}{0pt}{}{\vdots}{\vdots}} \ar@{-}[d]\ar@{-}[dll]\\
n-2 \ar@{-}[d]\ar@{-}[drr] & &-n+2 \ar@{-}[d]\ar@{-}[dll]\\
n-1 \ar@{-}[dr] & &-n+1 \ar@{-}[dl]\\
&-n& 
}
\end{array}
\qquad
\mbox{(b) }
\;\; (\Lambda_n^+ , \prec ) \;\; =   \!
\begin{array}{c}
\xymatrix@R=8pt@C=4pt{
&b \ar@{-}[dl] \ar@{-}[dr]& \\
\we{2m,1}{-+} \ar@{-}[d]\ar@{-}[drr] & &\we{2m,1}{+-} \ar@{-}[d]\ar@{-}[dll]\\
\we{2m,3}{--} \ar@{-}[d]\ar@{-}[drr] & &\we{2m,1}{++} \ar@{-}[d]\ar@{-}[dll]\\
{\genfrac{}{}{0pt}{}{\vdots}{\vdots}} \ar@{-}[d]\ar@{-}[drr] & 
&{\genfrac{}{}{0pt}{}{\vdots}{\vdots}} \ar@{-}[d]\ar@{-}[dll]\\
\we{2m,2m-1}{--} \ar@{-}[d]\ar@{-}[drr] & &\we{2m,2m-3}{++} \ar@{-}[d]\ar@{-}[dll]\\
\we{2m,2m-1}{-+} \ar@{-}[dr] & &\we{2m,2m-1}{+-} \ar@{-}[dl]\\
&\we{2m,2m-1}{++}& 
}
\end{array}
$$
\caption{(a) Cell-module weight-label poset.
Here $0$ is the maximal element and $-n$ is the minimal element.
(b) Cell-module weight label poset in DN-labelling
(even $n=2m$ case), see \S\ref{sec:dnlabel}.
\label{fig:cmwl}}
\end{figure}

\section{Review of construction of $\bx_n$ 
  cell modules}\label{sect:cell}

\begin{figure} 
$$
\begin{array}{c}
\xymatrix@R=26pt@C=6pt{
&\de_0 \ar@{-}[dl] \ar@{-}[dr]& \\
 \de_1 \ar@{-}[d]\ar@{-}[drr] & &  \de_{-1} \ar@{-}[d]\ar@{-}[dll]\\
 \de_2 \ar@{-}[d]\ar@{-}[drr] & &  \de_{-2} \ar@{-}[d]\ar@{-}[dll]\\
{\genfrac{}{}{0pt}{}{\vdots}{\vdots}} \ar@{-}[d]\ar@{-}[drr] & 
      &{\genfrac{}{}{0pt}{}{\vdots}{\vdots}} \ar@{-}[d]\ar@{-}[dll]\\
{\de_{m-1}=\dpmmone} \ar@{-}[d]\ar@{-}[drr] & 
         &{\de_{-(m-1)}} \ar@{-}[d]\ar@{-}[dll]\\
{\de_{m}=\dmmone} \ar@{-}[d]\ar@{-}[drr] & 
         &{\de_{-m}=\dppone} \ar@{-}[d]\ar@{-}[dll]\\
{\de_{m+1}} \ar@{-}[d]\ar@{-}[drr] & 
         &{\de_{-(m+1)}=\dmpone} \ar@{-}[d]\ar@{-}[dll]\\
{\vdots} \ar@{-}[d]\ar@{-}[drr] & &{\vdots} \ar@{-}[d]\ar@{-}[dll]\\
\de_{n-1} \ar@{-}[dr] & & \de_{-n+1} \ar@{-}[dl]\\
&\de_{-n}& 
}
\end{array}
$$
\caption{Representative diagrams in the cell ideal poset.
\label{fig:cmwl2}}
\end{figure}

\noindent
Consider the poset $(\Lambda_n , \prec )  $ given in Figure \ref{fig:cmwl}.
A set $\{ \Sb_{n}(l) \}_{l \in \Lambda_n}$
of $\bx_n$-modules 
is constructed over arbitrary $k$ in \cite{gensymp}. 
In this section we review the construction.
One should start by thinking of $k$ not as a field but 
rather as the commutative ring $\Z[\ddd ]$ here. 
Then one can pass to any case by base change. 
These modules pass to simple modules in the semisimple cases
(see \cite{gensymp}), 
so they can be thought of as the 
integral forms of the `ordinary' irreducibles in a
Brauer-modular system \cite{Brauer39,benson}. 
(Although our setup requires careful preparation to be properly
modular, cf. \cite{benson}  
--- we will not need to develop the full machinery here.)

\begin{figure}
\[ 
\begin{tikzpicture}[scale=0.4]
    \draw (0,3) to[out=-90, in=-90] node[pos=0.5]{$\bullet$} +(1,0);
    \draw (2,3) to node[pos=0.5]{$\bullet$} +(0,-2);
    \draw (3,3) to +(0,-2);
    \draw (4,3) to node[pos=0.5,white]{$\bullet$} node[pos=0.5]{$\circ$} +(0,-2);
  \end{tikzpicture},\hspace{1em}
 \begin{tikzpicture}[scale=0.4]
    \draw (0,3) to[out=-90, in=-90] +(1,0);
    \draw (2,3) to node[pos=0.5]{$\bullet$} +(0,-2);
    \draw (3,3) to +(0,-2);
    \draw (4,3) to node[pos=0.5,white]{$\bullet$} node[pos=0.5]{$\circ$} +(0,-2);
  \end{tikzpicture},\hspace{1em}
 \begin{tikzpicture}[scale=0.4]
    \draw (1,3) to[out=-90, in=-90] +(1,0);
    \draw (0,3) to[out=-90, in=90] node[pos=0.5]{$\bullet$} +(2,-2);
    \draw (3,3) to +(0,-2);
    \draw (4,3) to node[pos=0.5,white]{$\bullet$} node[pos=0.5]{$\circ$} +(0,-2);
  \end{tikzpicture},\hspace{1em}
 \begin{tikzpicture}[scale=0.4]
    \draw (2,3) to[out=-90, in=-90] +(1,0);
    \draw (0,3) to[out=-90, in=90] node[pos=0.5]{$\bullet$} +(2,-2);
    \draw (1,3) to[out=-90, in=90] +(2,-2);
    \draw (4,3) to node[pos=0.5,white]{$\bullet$} node[pos=0.5]{$\circ$} +(0,-2);
  \end{tikzpicture},\hspace{1em}
  \begin{tikzpicture}[scale=0.4]
    \draw (3,3) to[out=-90, in=-90] +(1,0);
    \draw (0,3) to[out=-90, in=90] node[pos=0.5]{$\bullet$} +(2,-2);
    \draw (1,3) to[out=-90, in=90] +(2,-2);
    \draw (2,3) to[out=-90, in=90] node[pos=0.5,white]{$\bullet$} node[pos=0.5]{$\circ$} +(2,-2);
  \end{tikzpicture},\hspace{1em}
\begin{tikzpicture}[scale=0.4]
    \draw (3,3) to[out=-90, in=-90] node[white,pos=0.5]{$\bullet$}node[pos=0.5]{$\circ$} +(1,0);
    \draw (0,3) to[out=-90, in=90] node[pos=0.5]{$\bullet$} +(2,-2);
    \draw (1,3) to[out=-90, in=90] +(2,-2);
    \draw (2,3) to[out=-90, in=90] node[pos=0.5,white]{$\bullet$} node[pos=0.5]{$\circ$} +(2,-2);
  \end{tikzpicture}.
\]
\caption{
Half-diagram basis of  cell module 
   $\Sb_{5}(1) = W^{(5,2)}_{-,-}$.
\label{fig:w52}}
\end{figure}

The left  $b_n^x$-module
$\Sb_{n}(l)$
has a 
basis of half-diagrams constructed similarly to 
the blob algebra case
(cf. \cite[p. 593]{blobcgm}, \cite[Section 8]{gensymp}).
See Figure \ref{fig:w52} for an example.
Note that by (\ref{de:fdd}) the left action corresponds to stacking a
diagram on top of the basis element.

Consider $l \in \Lambda_n$. To construct a basis $\basis_{n}(l)$ 
 for $\bx_n$-module  $\Sb_{n}(l)$ in general we proceed as follows. 
Consider the subset of $\Bxx_n$ of diagrams with $|l|$ undecorated propagating
lines. 
If
$l$ is positive, then further restrict to 
diagrams with a left blob on the first propagating line. 
Otherwise, if $l$ is negative, then there must be no such blob. 
Now pick any one of the remaining diagrams $d$, and take the subset 
of diagrams agreeing with $d$ in the lower half. 
Finally,
as the lower half is the same in all diagrams, and does not 
affect multiplication, we omit it.
(As another example, 
half-diagram bases for the cell modules for low rank $b_n^x$ are
listed in \cite[Figure 3]{gensymp}. 
There cut lines are used in place of blobs.)
The algebra action is by diagram stacking, except that diagrams
arising that lie outside the basis (necessarily with 
higher weight in the sense of Figure \ref{fig:cmwl}) are zero.

The case with no decorated propagating lines is easiest to explain.
In this case, as a left $b_n^x$-module, the $2$-sided ideal $b_n^x d_0 b_n^x$ is a direct sum of
isomorphic copies of the cell module $\Sb_n(0)$ where the number of
such copies is the same as the number of possible lower half diagrams.
We have $\Sb_n(0) \cong b_n^x d_0$.
(This is the formulation used in
Green et al's 
%our
original analysis of the representation theory of $b_n^x$
\cite{gensymp},
but not that  used in the subsequent crucial work of 
De Gier and Nichols
\cite{degiernichols}.
When helpful, we  
colloquially refer to this as the ``blob-theoretic'' definition to distinguish
from other formulations.)

Recall:

\begin{prop}[{\cite{gensymp}}] The algebra $\bx_n$ is a cellular algebra,
in the sense of \cite{GrahamLehrer}. 
The modules $\{ \Sb_{n}(l) \}_{l \in \Lambda_n}$ are the cell modules.
The  labelling poset $\Lambda_n$ for the cell modules 
is as in Figure \ref{fig:cmwl}.
\end{prop}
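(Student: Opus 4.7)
The plan is to verify the Graham--Lehrer cellular algebra axioms directly from the diagrammatic definition, exploiting the fact that the basis $\Bxx_n$ and its multiplication (\ref{de:fdd}) are already set up to reflect the intended cellular structure.

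First, I would fix the cellular datum. The poset is $(\Lambda_n,\prec)$ from Figure~\ref{fig:cmwl}(a). For each $l\in\Lambda_n$ let $M(l)$ be the set of ``top halves'' of diagrams in $\Bxx_n$ that have exactly $|l|$ undecorated propagating lines, with a left blob on the leftmost propagating line when $l>0$, no such blob when $l<0$, and the obvious (nested) cup structure on the remaining vertices. This is precisely the set $\basis_n(l)$ used to index the half-diagram basis of $\Sb_n(l)$ in \S\ref{sect:cell}. Given $(S,T)\in M(l)\times M(l)$, let $C^l_{S,T}\in\Bxx_n$ be the diagram obtained by stacking $S$ over the horizontal reflection of $T$, matching up the $|l|$ propagating strings. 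One then checks that every diagram in $\Bxx_n$ is of the form $C^l_{S,T}$ for a unique triple $(l,S,T)$: $l$ is read off as the signed propagating-line count together with the blob status of the leftmost propagating line, and $S,T$ are read off as its top and bottom halves. Thus $\{C^l_{S,T}\}$ is a basis of $\bx_n$. The required involution $*$ is reflection in a horizontal axis, under which $(C^l_{S,T})^*=C^l_{T,S}$ by construction.

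The substantive step is the multiplication axiom: for every $a\in\bx_n$ one must show
\[
a\cdot C^l_{S,T}\ \equiv\ \sum_{S'\in M(l)} r_a(S',S)\, C^l_{S',T}\ \pmod{A(\prec l)},
\]
with the coefficients $r_a(S',S)$ independent of $T$, where $A(\prec l)$ is the span of all $C^{l'}_{U,V}$ with $l'\prec l$. It suffices to check this for $a$ running through the generators $e,e_1,\ldots,e_{n-1},f$ of Figure~\ref{fig:1}. Stacking such a generator on top of $C^l_{S,T}$ gives a pseudo-diagram in $\Bx_n$; applying the straightening relations of Table~\ref{blobtab} and the topological relation (\ref{topquot}) reduces it to $k\Bxx_n$ by Proposition~\cite{mgp3}. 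All of these local reductions take place in the upper half of the diagram (near $S$ and the matched propagating lines), so the bottom half $T^*$ is untouched. If after reduction the number of undecorated propagating lines and the blob status of the leftmost such line are unchanged, the result is $\sum_{S'} r_a(S',S)\,C^l_{S',T}$ with $r_a(S',S)$ depending only on $a$ and $S$. Otherwise the resulting diagram has a propagating line turned into a cap, or has acquired an extra blob/loop contribution, and therefore factors through one of the ideals $\bx_n\de_{l'}\bx_n$ with $l'\prec l$ by the previous Proposition; such terms belong to $A(\prec l)$.

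The main obstacle is the bookkeeping in this last step: one must argue cleanly that \emph{any} reduction-product whose weight has strictly decreased actually lies in the cell ideal $A(\prec l)$, uniformly in $T$. This is essentially the content of the ideal-containment Proposition just before, whose representative diagrams $\de_{l'}$ of Figure~\ref{fig:cmwl2} are designed so that a diagram with fewer undecorated propagating lines, or an extra left/right blob on the leftmost propagator, factors through the appropriate $\de_{l'}$. Once the cellular axioms are in place, the identification $\Sb_n(l)\cong A(\preceq l)/A(\prec l)\cdot C^l_{\bullet,T_0}$ for any fixed $T_0\in M(l)$ matches the half-diagram construction of \S\ref{sect:cell}, so the $\Sb_n(l)$ are indeed the Graham--Lehrer cell modules and $\Lambda_n$ is their labelling poset.
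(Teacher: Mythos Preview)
The paper does not supply its own proof of this proposition; it is simply recalled from \cite{gensymp}. Your sketch is essentially the argument given there: decompose each basis diagram in $\Bxx_n$ into a top half and a bottom half along its propagating lines, define $C^l_{S,T}$ accordingly with vertical reflection as the anti-involution, and verify the multiplication axiom by observing that the straightening rules of Table~\ref{blobtab} and the topological relation (\ref{topquot}) act locally on the top half and can only preserve or strictly lower the weight in the poset of Figure~\ref{fig:cmwl}. One small remark: your description of $M(l)$ mentions only the left-blob status of the leftmost propagating line, but the right-blob status of the rightmost one is also part of the data; it happens to be forced by the parity of $|l|$ relative to $n$, so your description (which mirrors the paper's own in \S\ref{sect:cell}) is correct once this is noted.
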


When all parameters are invertible, 
$\Lambda_n$ also labels the simple modules, 
in which case the algebra is also quasi-hereditary with the
above poset and the cell modules are standard modules.

\newpage
As an aid to the reader we include the following index of notation.
\medskip

\begin{tabular}{ll}
$ \alpha^{(n,m)}_{\ee_1,\ee_2} = [n] \frac{[2(-m+\ee_1w_1+\ee_2w_2)]}{[-m+\ee_1w_1+\ee_2w_2]} $ & 
scalar for the action of $Z_n$ (theorem 6.7)\\
  ASTL & Affine Symmetric Temperley-Lieb\\
$b$ & De Gier-Nichols parameter (plays same role as $\kappa_{LR}$)\\
$b^x_n$ & symplectic blob algebra \\
  $B_x$ & left-right blob pseudo-diagrams without diagrams with
          features in table 1\\
  $B_x'$ & left-right blob pseudo-diagrams \\
 $\basis_{n}(l)$  & basis for the cell module $\Sb_n(l)$\\
$\WWB$& basis  of $W^{(n,m)}_{\ee_1,\ee_2}$\\ 
  $\C$ & complex numbers \\
  $d|d'$ & diagram $d$ stacked over $d'$\\
  $d_i$ & element of $\bx_n$, as defined in figure 4\\
  $\ddd = (\delta, \delta_L, \delta_R, \kappa_L, \kappa_R,
  \kappa_{LR})$ & $6$-tuple of parameters (cf. table 2)\\
DN & De Gier-Nichols parameterisation (table 3)\\
  $e$ & left blob (cf. figure 1)\\
  $e_i $ & TL generator (cf. figure 1)\\
  $E_n'= d_0$ &  element of $\bx_n$ (section 4)\\
$\ee_i \in \{\pm 1\}$ & sign parameters for cell modules \\ 
  $f$ & right blob (cf. figure 1)\\
  $F$ & a localisation functor (proposition 8.2)\\
  $F'$ & a localisation functor (proposition 8.2)\\
$f(h)$ & definition 7.4\\
$g_0$, $g_1$, $g_2$, $\ldots$, $g_n$& generators for the Hecke algebra of
                               type $\tilde{C}$\\
  $G$ & a globalisation functor (proposition 8.2)\\
  $G'$ & a globalisation functor (proposition 8.2)\\
$\Gram{n,m}{\ee_1,\ee_2}$& the Gram matrix (section 7)\\ 
$\WWGam$ & Gram determinant (section 7)\\ 
$g(h)$ & definition 7.4\\
  GMP1 & A Green-Martin-Parker parametrisation (table 3)\\
  GMP2 & A Green-Martin-Parker parametrisation (table 3)\\
  $H(\tilde{C}_n)$ & Hecke algebra of type affine-C\\
  $J_i$ & `Jucys-Murphy' elements of $H(\tilde{C}_n)$  (definition 6.3)\\
  $k$ & an algebraically closed field \\
$k(u) = - \frac{[(u-w_1 +\theta)/2+1][(u-w_1-\theta)/2]}{[u][w_2+1]}$ & an element of $k[\ddd]$\\
$\lambda_p$ & eigenvalue associated to path $p$ of Gram matrix (Proposition 7.5)\\ 
 \end{tabular}

\begin{tabular}{ll}
 $(\Lambda_n, \prec)$& labelling poset $=\{-n,-n+1, \ldots, 0, 1, \ldots,
              n-1\}$ with order as in figure 3\\
$ (\Lambda_n^+ , \prec )$ & De Gier-Nichols labelling poset see figure
                            3\\
$[m]$ & quantum integer (section 3.1)\\
  $\N_0$& natural numbers (including $0$)\\
$p_0$ & fundamental path  (section 4) \\
$\mathcal{P}_n$ & set of paths  (section 4) \\
$\pi_n$ & set of paths $w_p$ giving a diagram basis for $W^{(n)}(b)$ (section 4)\\
$\Pi_n$ & set of paths $v_p$ giving a diagram basis for $W^{(n)}(b)$ (section 4)\\
  $q = [2] = \delta + \delta^{-1}$ & ``bulk'' parameter\\
$q$, $Q_1$, $Q_2$ & indeterminates for the Hecke algebra of type
                    $\tilde{C}$\\
$r(u) = \frac{[u+1]}{[u]}$ & an element of $k[\ddd]$\\
 $\{ \Sb_{n}(l) \}_{l \in \Lambda_n}$& the cell modules for $b_n^x$\\
$\mathcal{T}_n$ & Tchebychev recursion (section 3.1) \\
$\theta$ & De Gier-Nichols parameter that reparametrises $b$\\
  TL & Temperley-Lieb\\
  2BTL & two boundary Temperley-Lieb\\
  $\uri(d)$ & the number of lines crossing the right wall \\
  $\uro(d)$ & the number of lines crossing the left wall \\
  $v_p$ & an element of $b^x_nE'_n$ associated to the path $p$ (section 4)  \\
  $w_1$ & a quantisation parameter \\
  $w_2$ & a quantisation parameter \\
  $w_p$ & an element of $b^x_nE'_n$ associated to the path $p$ (section 4)  \\
$\W{n, m}{\ee_1,  \ee_2}$  & De Gier-Nichols Cell module\\
  $Z_n=\sum_{i=0}^{n-1} (J_i + J_i^{-1})$ & a central  element\\
$\langle - , - \rangle$ & inner product on the cell module (section 7)\\
 \end{tabular}

\subsection{On standard and De~Gier--Nichols weight labelling}\label{sec:dnlabel}
\newcommand{\astl}{affine-symmetric TL}
\newcommand{\ASTL}{ASTL}

In  \cite{degiernichols} 
there is a useful reformulation of $ (\Lambda_n , \prec ) $ 
as follows.
The basis $\Bxx_n$ is equivalent to a basis of \astl\ (\ASTL) diagrams 
(see \cite{gensymp} for the equivalence). 
In an \ASTL\ diagram 
``blobs'' are indicated by paired lines that touch the left (for a
left blob) or the right (a right blob) side of a diagram.
A corresponding 
half-diagram can in principle have any number of lines touching the
left or right side, but the parity of each number is preserved in the
(\ASTL\ version of the) basis of a cell module. 
Thus for $\ee_i \in \{\pm 1\}$ the module 
$\W{n, m}{\ee_1,  \ee_2}$ 
is the cell module with \ASTL\ half-diagram basis with $\ee_1$
parity  on the left side, $\ee_2$ parity  on the right
side and $m + \frac12 (\ee_1 + \ee_2)$ propagating lines 
(here $\ee_i = +1$ for even,  written as $+$; and
$\ee_i = -1$ for odd, written as $-$). 
We write $\Lambda_n^+$ for the new labelling scheme 
--- see Figure \ref{fig:cmwl}(b).
(Note in \cite{degiernichols} they have brackets on the cell
modules. We have dropped the brackets as the notation is already
complicated enough. So our $\W{n,m}{\ee_1, \ee_2}$ is their
$W^{(n,m)}_{\ee_1,\ee_2}$.)

The correspondence (in both directions) is given as follows:
$$
S_n(l) = \begin{cases}
\W{n,|l|}{-\sgn l,\, \sgn l} &\mbox{ if $n$ and $l$ have opposite
  parity, $l \ne 0$},\\
\W{n,|l+1|}{-\sgn l,\, -\sgn l} &\mbox{ if $n$ and $l$ have the same
  parity, $l \ne 0$}, \\
\W{n}{}(b) & \mbox{ if $l =0$}
\end{cases}
$$
\beq \label{eq:WbE}
\W{n,l}{\ee_1, \ee_2} =
\Sb_n(-\ee_1(l+\frac12(\ee_1+\ee_2)))
\eq
where $\sgn l$ is the sign of $l$. 

Remark: The argument $b$ used for the cell module
with no propagating lines indicates that
the structure of
this module
depends on a parameter $b$. This  is essentially the same as 
$\kappa_{LR}$ (see \S\ref{ss:para}).

\section{On $\ddd$ 
   parameter conventions and reparameterisation}
\label{sec:repara}
\subsection{Ground ring arithmetic}  
In the modular system \cite{Brauer39}
one works largely in the integral ground ring,
passing to a specific modular case 
(to address specific Artinian representation theory)
as late as possible.
However for reasons of arithmetic manipulation it may be expedient to 
perform computations as if in a different ground ring.
This  looks like base-change away from the generality of the
integral ring. 
But provided the change is
arithmetically reversible back to the integral ring, it is not
restrictive. 

An example is as follows. 
The substitution homomorphism $\Z[\delta] \rightarrow \Z[q,q^{-1}]$
given by $\delta \mapsto q+q^{-1}$ is not an isomorphism.
However it is an injection, so the map can be inverted on any element
of the image. 
Thus one can do arithmetic on elements of $\Z[\delta]$ working in the
image, and then recover identities that hold in $\Z[\delta]$. 

In the example, a merit of the substitution is if one works with
elements of $\Z[\delta]$ satisfying the recursion 
$\mathcal{T}_n = [2]\mathcal{T}_{n-1}-\mathcal{T}_{n-2}$, with $\mathcal{T}_0=0 $  and $\mathcal{T}_1=1$
(for example certain Gram determinants of the Temperley--Lieb algebra
satisfy this recursion 
\cite{martsaleur}).
This is the Tchebychev recursion \cite[\S6.3.3]{marbk}. 
The complex roots of $\mathcal{T}_n$ are the so-called
Beraha numbers \cite{Baxter81} --- but factorisation is not obvious. 
However working in the image these elements take the simple form 
$\mathcal{T}_n = [n]$, where
$$
[m] \;  := \; q^{-m+1} + q^{-m+3} + \cdots + q^{m-3} + q^{m-1} .
$$ 
This formulation  has manifest factorisation properties.
In particular $[n]=0$ requires $q$ to be a root of unity.

\subsection{Parameterisation by exponents $w_1, w_2$}\label{ss:para}

$\;$ 
In order to determine the representation theory of $\bnx(\ddd)$ 
it is useful
to reparameterise as discussed 
in \cite[\S2]{mgp3}. 
We recall the key  parameterisations 
in Table~\ref{tab:repara}.  
\newcommand{\mo}{\mapsto \;\;\;}
\begin{table}
\[
\begin{array}{l|rrr|rrrrrr}
    &\multicolumn{3}{c}{\mbox{generator scaling} }
  &\multicolumn{6}{c}{\mbox{parameter scaling / reparameterisation} }
\\ \hline 
\mbox{label} & e\mo  &e_i\mo &   f\mo   &  \dd\mo &  \dl\mo &  \dr\mo & \kl\mo&\kr\mo&\kk\mo 
\\ \hline 
1
&\frac{e}{\kl}&e_i&\frac{f}{\kr}&\dd&\frac{\dl}{\kl}&\frac{\dr}{\kr}&1&1&\frac{\kk}{\kl\kr}
\\
2 &-e&-e_i&-f&-\dd&-\dl&-\dr&\kl&\kr&\kk
\\ \hline 
\mbox{DN}& & && [2] &\frac{[\omega_1]}{[\omega_1 +1]}
  &\frac{[\omega_2]}{[\omega_2 +1]} & 1&1&b
\\
\mbox{GMP1}&&&&[2]&[w_1]&[w_2]&[w_1+1]&[w_2+1]&\kk\\
\mbox{GMP2}&&&&-[2]&-[w_1]&-[w_2]&[w_1+1]&[w_2+1]&\kk
\end{array}
\]
\caption{
 Alternative parameterisations for $\bnx$. \label{tab:repara}}
\end{table}
Generator scaling ``1'' 
in Table~\ref{tab:repara}
induces an isomorphism with the algebra
with  parameters rescaled as shown,
reducing from 6 parameters to 4 \cite{mgp2I}.
 ``GMP1'', ``GMP2'' and ``DN'' reparameterise with parameters
 $q,w_1,w_2$
(cf. \cite{Baxter81,Lusztig83,Lusztig99,TL,martsaleur}). 
DN is the parameter choice of De Gier--Nichols in 
\cite{degiernichols}. 
GMP1 and GMP2 are the parameter
choices that were most useful for \cite{mgp3}. 
GMP1 and GMP2 can be 
converted from one to another by taking the isomorphic algebra with
generators multiplied by  $-1$, i.e. using  ``2'' to rescale.
GMP2 turns out to be the most convenient for presenting the results about
general families of homomorphisms in \cite{mgp3}.  Then  ``1'' 
 converts from DN to GMP1 and then to GMP2 via  ``2''.

De Gier--Nichols  further 
reparameterise $b$ in terms of 
a new parameter $\theta$:
\begin{equation}
\label{eq:b}
b = \left\{ \begin{array}{ll}
      {\displaystyle{\left[{\frac{w_1+w_2+\theta +1}{2}}\right]
      \left[{\frac{w_1+w_2-\theta +1}{2}}\right]}} &\mbox{if $n$
        even}\\
         & \\
      {\displaystyle{-\left[{\frac{w_1-w_2+\theta }{2}}\right]
      \left[{\frac{w_1-w_2-\theta }{2}}\right]}} &\mbox{if $n$
        odd.}
\end{array}\right.
\end{equation} 

\medskip

\section{Bases of the $\bx_n$-module $\W{n}{}(b) = S_n(0)$}\label{sec:dnpath}

\begin{defn}
For $n \in \N$   
 a \emph{{Pascal} path} $p$ is 
an element  of the subset of $\Z^{n+1}$ given by:
\[
\PP_n \; = \; \{ \;
p=(h_0,h_1,\dots,h_n)
\; | \; h_0=0; \mbox{and  $|h_{i+1}-h_i|=1$ for 
 $0\leq i \leq n-1$ 
}  \}
\]
In particular, define the \emph{fundamental path}
$
\po =(0,-1,0,-1,0,\dots).
$
\end{defn}

\begin{figure}[ht]
  \centering
  \tikz[scale=0.5]{
    \draw[dashed] (0,0) to (6,6);
    \draw[dashed] (1,-1) to (6,4);
    \draw[dashed] (2,-2) to (6,2);
    \draw[dashed] (3,-3) to (6,0);
    \draw[dashed] (4,-4) to (6,-2);
    \draw[dashed] (5,-5) to (6,-4);
    \draw[dashed] (0,0) to (6,-6);
    \draw[dashed] (1,1) to (6,-4);
    \draw[dashed] (2,2) to (6,-2);
    \draw[dashed] (3,3) to (6,0);
    \draw[dashed] (4,4) to (6,2);
    \draw[dashed] (5,5) to (6,4);
    \draw[dashed] (6,6) to (6,-6);
    \draw[ultra thick] (0,0) to ++(1,-1) to ++(1,1) to ++(1,-1) to ++(1,1) to ++(1,-1) to ++(1,1);
    }
  \caption{The tiled lattice and the fundamental path $p_0$.}
  \label{fig:pathlattice}
\end{figure}

\begin{figure}
\includegraphics[width=2in]{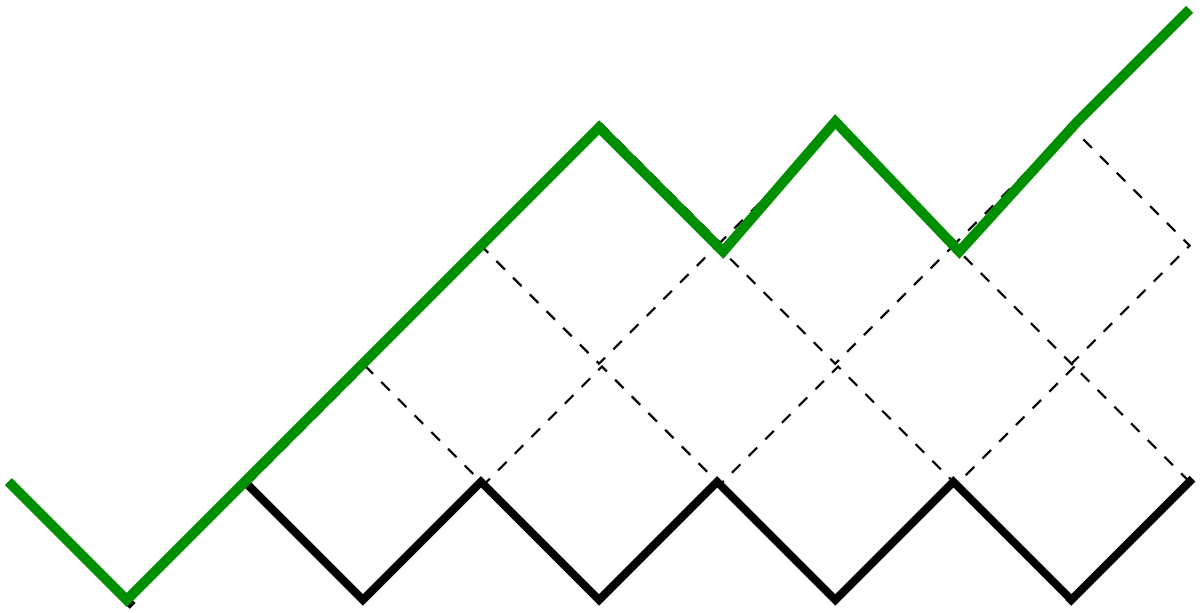}
\hspace{1cm}
\includegraphics[width=.432in]{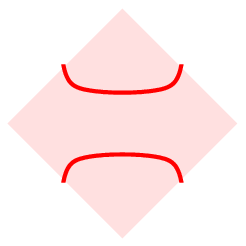}
\hspace{1cm}
\includegraphics[width=2in]{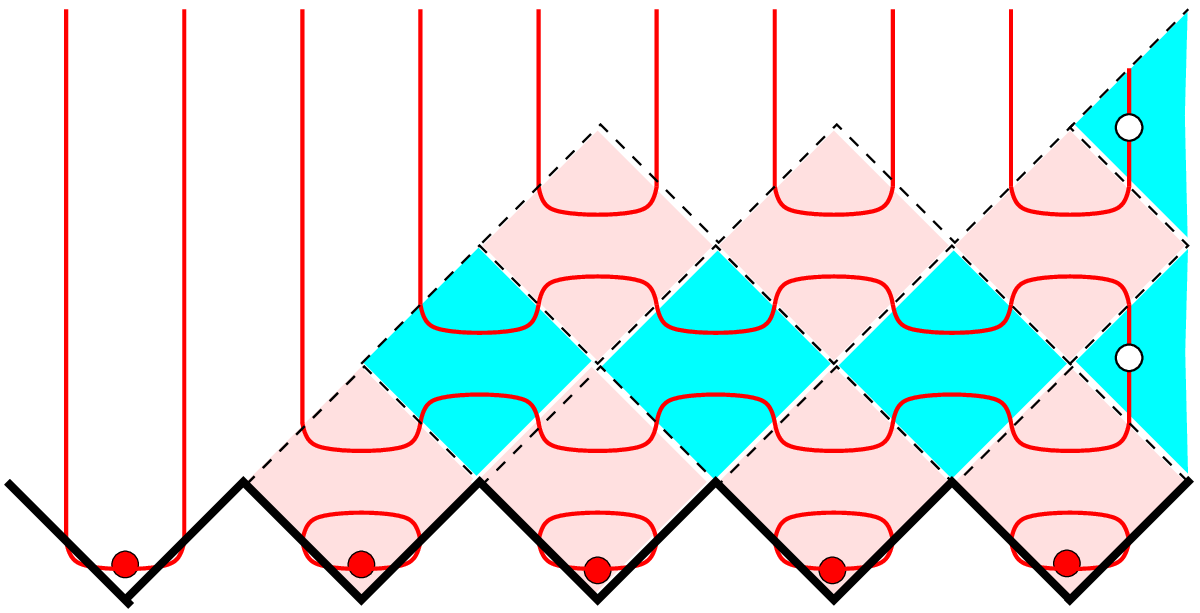}
\caption{Illustration of 
  envelope-tiling to basis-element correspondence
  in rank $n=10$.
  Example path strictly on-or-above-$p_0$  (green path);
example single tile;
  and
  tiled envelope (hence basis element in red) for this example.
\label{fig:tilex04}}
\end{figure}

\begin{figure}
\includegraphics[width=1.72in]{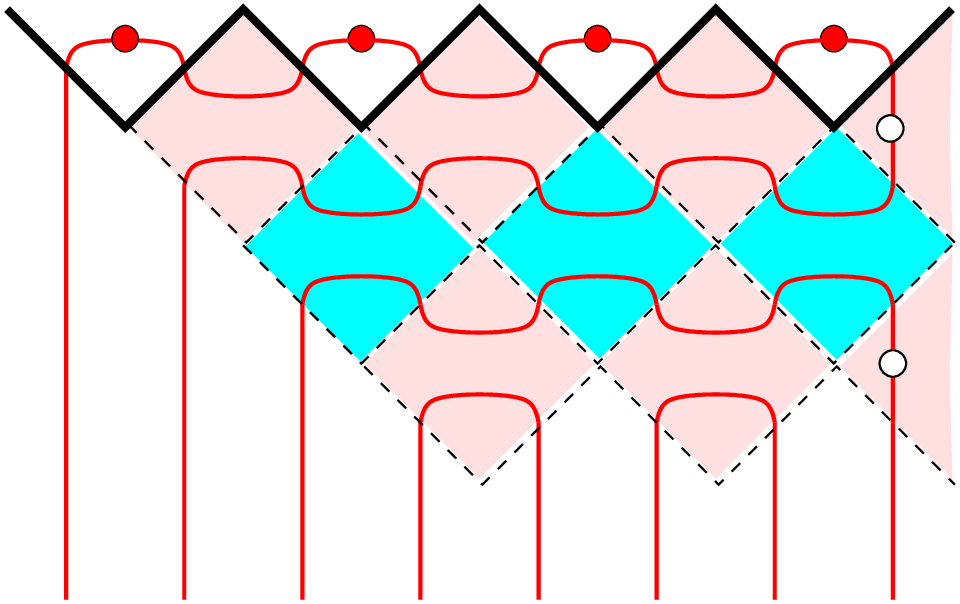}
\hspace{1cm}
\raisebox{2.53195cm}{
\includegraphics[width=1.72in]{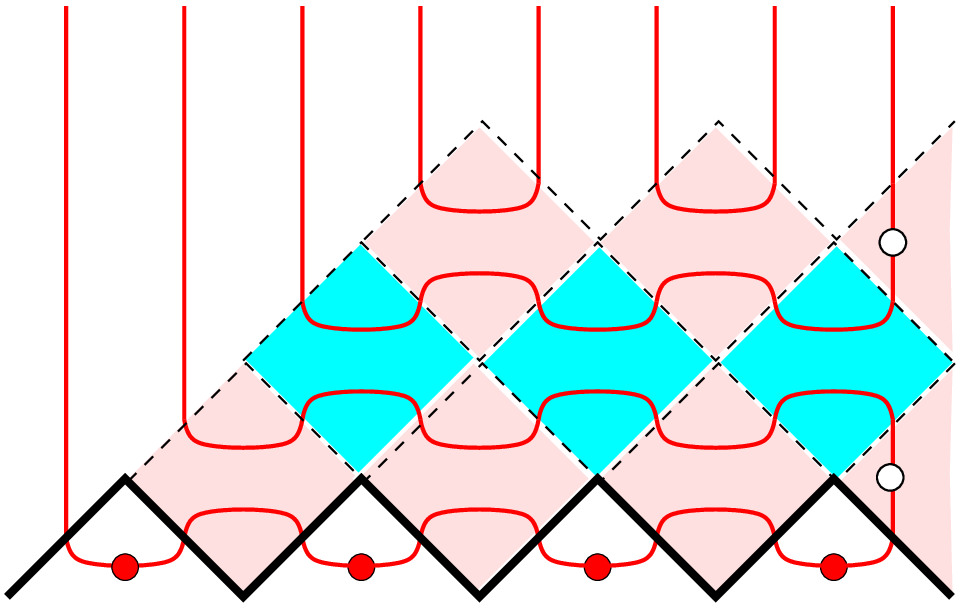}}
\caption{Envelope-tiling to basis-element correspondence.
%  Path strictly above-$p_0$ example;
  Path  strictly on-or-below-$p_0$ example
  --- note this renders the basis element upside-down;
  then same example {\em drawn upside-down}.
  \label{fig:tilex03}}
\end{figure}

\begin{figure}
\includegraphics[width=2in]{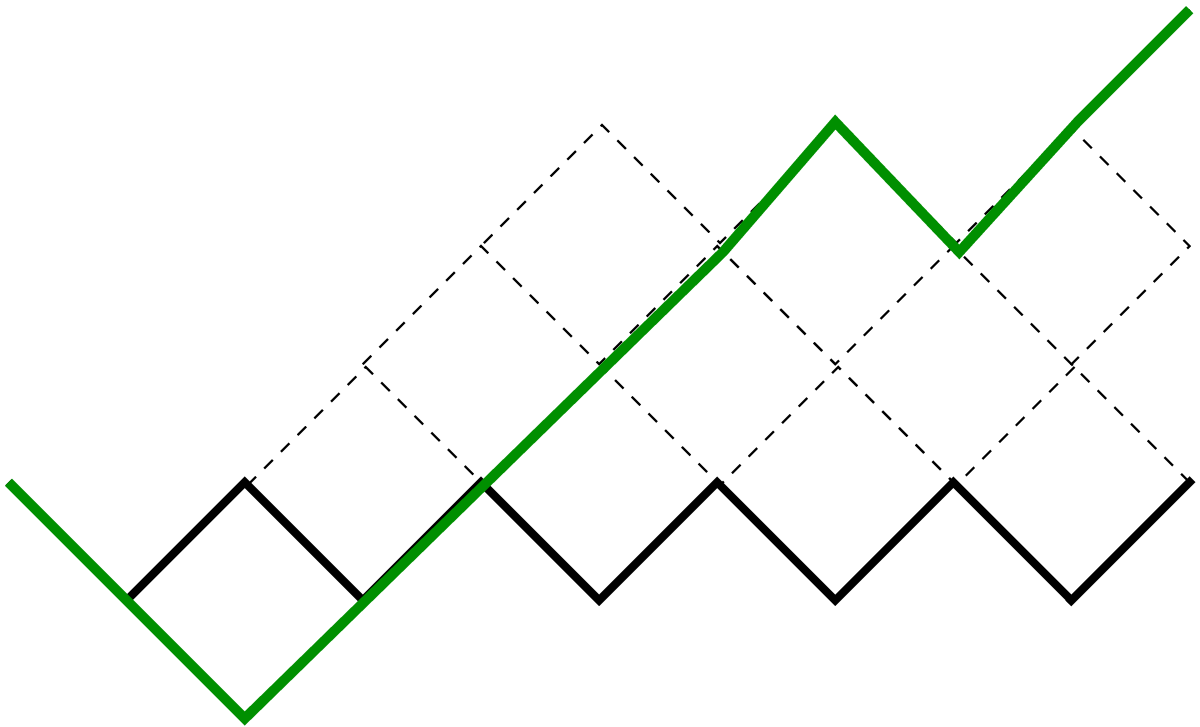}
\hspace{1cm}
\raisebox{-1.02cm}{
\includegraphics[width=2in]{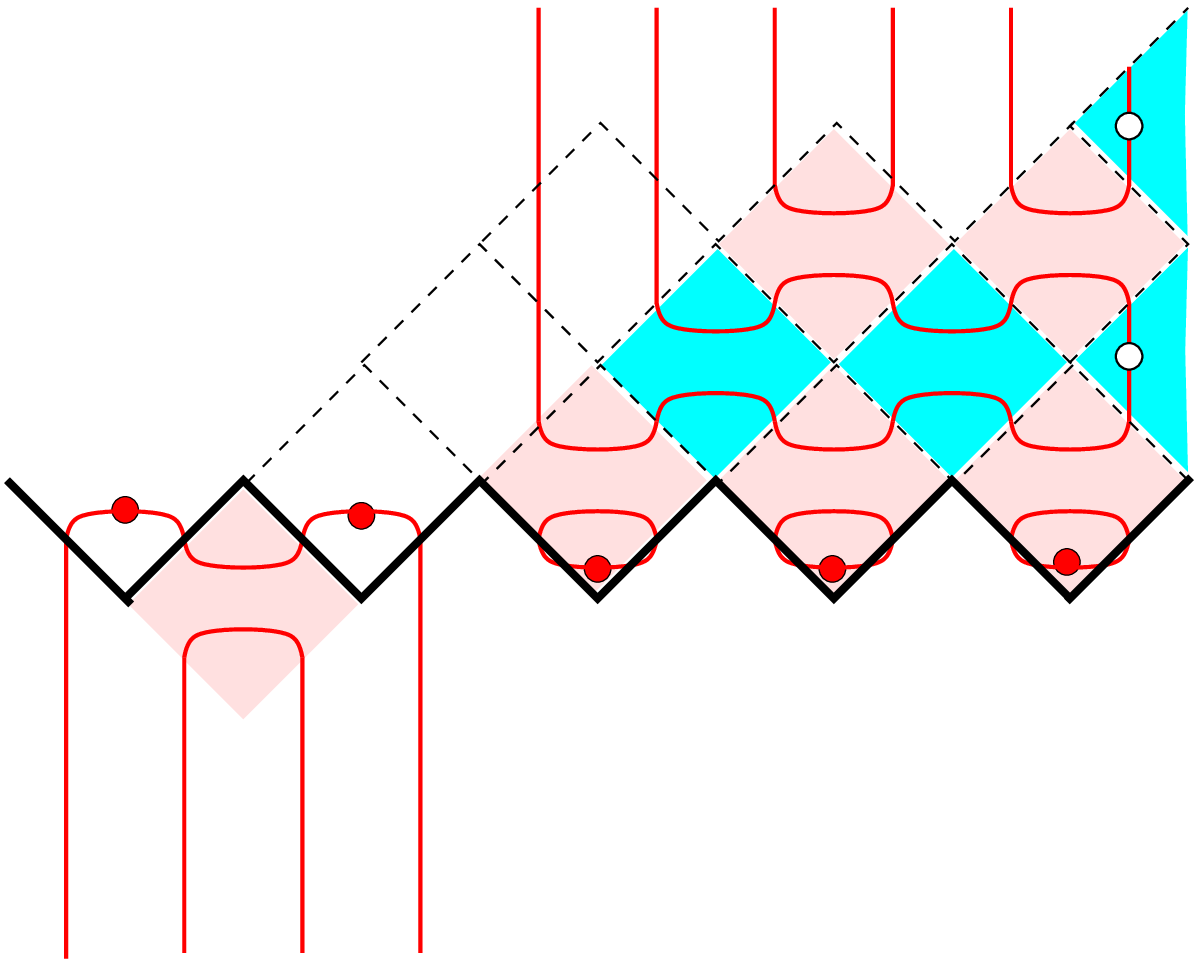}}
\caption{Illustration of 
  envelope-tiling to basis-element correspondence
  in rank $n=10$.
  Example path not strictly on-or-above-$p_0$  (green path);
  and
  tiled envelope (hence basis element in red,
  rendered as one down-oriented and one up-oriented factor) for this example.
\label{fig:tilex04-2}}
\end{figure}

\begin{figure}
  \includegraphics[width=2in]{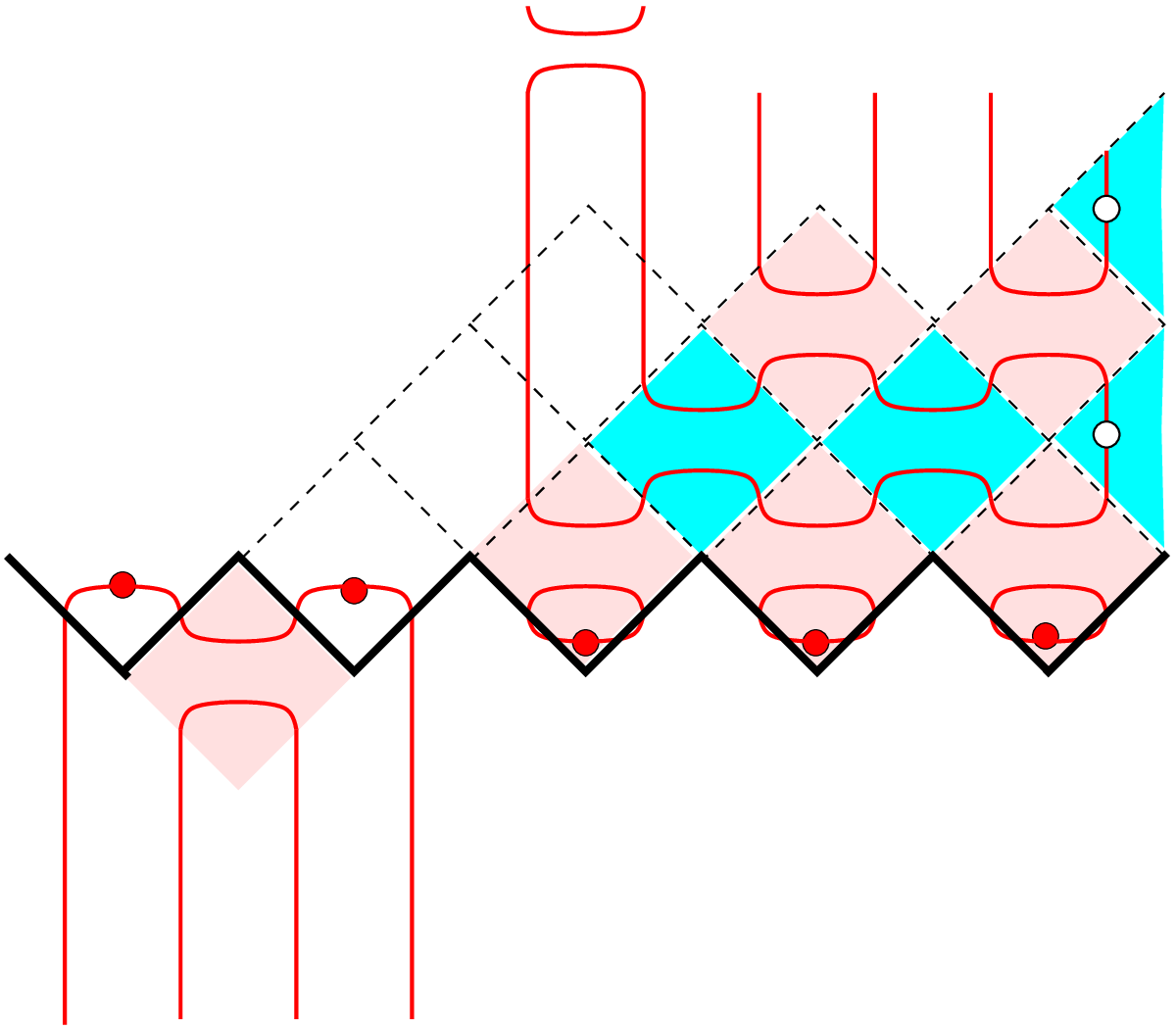}
  \hspace{1cm}
   \includegraphics[width=2in]{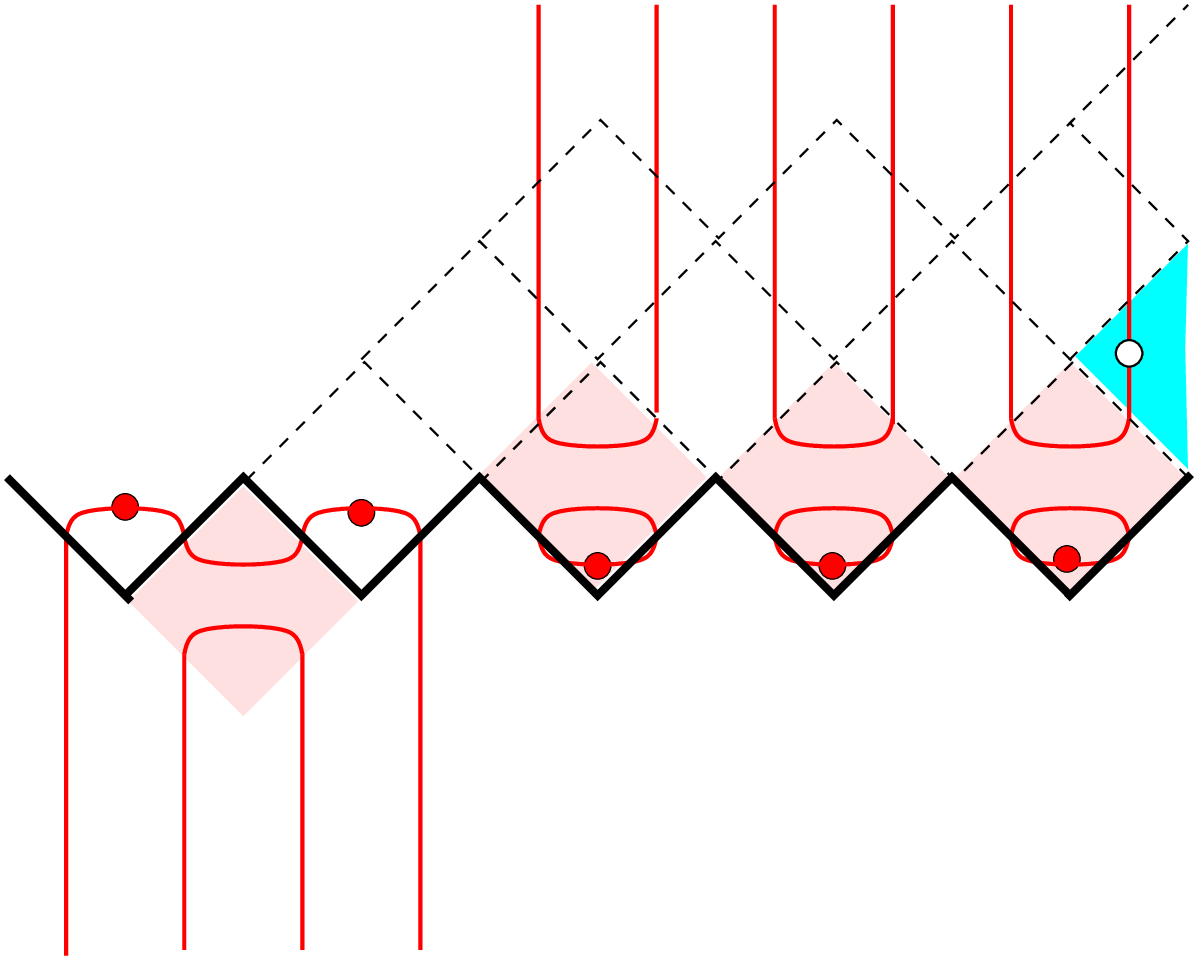} 
  \caption{Illustration of spanning property of the constructed set:
    action of $e_5$, showing $e_5 w_p$
    (on the left) lies in the span of a lower $w_{p'}$ (on the right).
\label{fig:tilex06}}
\end{figure}

We can draw Pascal
paths on a `tiled' square lattice as in Figure \ref{fig:pathlattice}.
Each path $p$ may be partitioned according to the points at
which it agrees with $\po$; 
the 
parts of $p$ that are above $\po$; 
and those
below $\po$.
Each of these latter two parts defines an `envelope' between the two paths.
We can move from $\po$ to $p$ 
through a sequence of intermediate paths $p_i$
by `adding tiles' (or half tiles on the right)  within each
envelope.
In particular note that if $p_i \neq p$ then 
there is always a lowest numbered position
(from left to right) at
which a tile can be added.
Define $\PP(p)$ as the ordered set passing from $\po$ to $p$ in this way.

\newcommand{\EE}{E'}

Define  $\EE_n$  in $b_n^x$ by $\EE_n = d_0$
(to make the dependence on $n$ manifest).
That is:
  \begin{align*}
    \EE_n = 
 \tikz[scale=0.5,baseline={(0,0.4)}]{
      \draw (1,2) to[out=-90,in=-90] node[pos=0.5]{$\bullet$} +(1,0);
      \draw (3,2) to[out=-90,in=-90] node[pos=0.5]{$\bullet$} +(1,0);
      \draw (1,0) to[out=90,in=90] node[pos=0.5]{$\bullet$} +(1,0);
      \draw (3,0) to[out=90,in=90] node[pos=0.5]{$\bullet$} +(1,0);
      \draw (2.5,1) node {$\cdots$};
      \draw (4.5,0) to[out=90,in=90] node[pos=0.5]{$\bullet$} +(1,0);
      \draw (4.5,2) to[out=-90,in=-90] node[pos=0.5]{$\bullet$} +(1,0);
      \draw (0.5,2) rectangle (6,0);}\text{~~~$\;$ if $n$ is even, }
\qquad
    \EE_n&=  
 \tikz[scale=0.5,baseline={(0,0.4)}]{
      \draw (1,2) to[out=-90,in=-90] node[pos=0.5]{$\bullet$} +(1,0);
      \draw (3,2) to[out=-90,in=-90] node[pos=0.5]{$\bullet$} +(1,0);
      \draw (1,0) to[out=90,in=90] node[pos=0.5]{$\bullet$} +(1,0);
      \draw (3,0) to[out=90,in=90] node[pos=0.5]{$\bullet$} +(1,0);
      \draw (2.5,1) node {$\cdots$};
      \draw (4.5,0) to[out=90,in=90] node[pos=0.5]{$\bullet$} +(1,0);
      \draw (4.5,2) to[out=-90,in=-90] node[pos=0.5]{$\bullet$} +(1,0);
      \draw (6,0) to node[pos=0.5]{$\bullet$} (6,2);
      \draw (0.5,2) rectangle (6.5,0);}\text{~~~$\;$ if $n$ is odd.}
  \end{align*}

\newcommand{\ppi}{\pi}     

Note from (\ref{eq:WbE}) that
$$
\W{n}{}(b)  = S_n(0) = \bnx d_0    = \bnx \EE_n . 
$$
Define a subset 
$\ppi_n = \{ w_p \mid p \in \PP_n \}$
of $\Wb$ as follows. To a path $p \in \PP_n$ we associate an
element $w_p$ defined recursively through $\PP(p)$:
firstly $w_{\po} = \EE_n$;
then $w_{p_{j+1}} = e_i w_{p_j}$ if $p_{j+1}$ obtained from $p_j$ by
adding a tile in position $i$.

To determine if $\ppi_n$ is spanning the argument is essentially
analogous to
the TL case (cf. \cite{marbk}).
We partially order paths by $p>q$ if $q$ lies in the envelope of $p$,
and work by induction on $p$ with $p_0$ as base.
We aim to show that 
$e_i w_p$ lies in the span of $\ppi_n$ for every $i$ if this holds
for each $q<p$.
(This holds for the base case since $e_i p_0$ is in the span 
by construction.) 
We need to consider the action of
elements $e_i$ on each $w_p$ when $p$ does not have a max or min at $i$
(since otherwise $e_i w_p$ is clearly in the span by construction of $\ppi_n$).
Note that if $p$ is straight at $i$
(consider e.g. $i=5$ in Fig.\ref{fig:tilex04-2} or \ref{fig:tilex06})
then
$w_p = w e_{i+1} e_i w_{p'}$ (or similar)
for some $w= e_{i+a}\cdots e_{i+2}$ commuting with $e_i$
(in our example it is simply $w=e_7$)
and some $w_{p'}$,
whereupon we have
$e_i w e_{i+1} e_i w_{p'} = w e_i  e_{i+1} e_i w_{p'} =  w  e_i w_{p'} $.
Note that $e_i w_{p'}$ is a $w_q$ for a $q<p$ and straight at $i+2$.
Thus either $w=1$ and we are done or we may iterate to an even lower path,
until the inductive step is completed. 
Thus $\ppi_n$ is spanning. And then, comparing with the dimension of
 $\Wb$ we have:

\begin{thm} \label{th:dbb}
  The subset $\ppi_n$ is
a basis for $\Wb$ as a left $\bnx$-module.
\qed
\end{thm}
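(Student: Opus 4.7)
The plan is to exhibit a bijection $\Psi : \PP_n \to \basis_n(0)$ between paths and the standard half-diagram basis of $\Wb$, and then to verify, by induction along the tile-addition sequence $\PP(p)$, that $w_p = \Psi(p)$ inside $\Wb$.  Combining these gives $\ppi_n = \basis_n(0)$, proving the theorem.

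\textbf{Step 1: Paths to half-diagrams.}  First I would describe $\Psi$ explicitly.  Given $p \in \PP_n$, superimpose $p$ on $\po$; the two paths cobound a collection of ``envelopes''.  Each envelope lying in the region above $\po$ is converted into a nested family of arcs carrying right blobs (matching the local right-exposure of the \ASTL\ picture), each envelope below $\po$ becomes nested left-blob arcs, and the coincident segments of $p$ and $\po$ contribute the corresponding unmodified arcs of the lower half of $\EE_n$.  One checks that the output is a valid \ASTL\ half-diagram, that $\Psi(\po)$ reproduces the lower half of $\EE_n$, and that $\Psi$ is injective; since the cardinalities $|\PP_n|$ and $|\basis_n(0)|$ are equal (by the standard half-diagram count used in \S\ref{sect:cell} and \cite{degiernichols}), $\Psi$ is a bijection.

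\textbf{Step 2: Induction along $\PP(p)$.}  The base case is $w_{\po} = \EE_n = \Psi(\po)$ by definition.  For the inductive step, suppose $p_{j+1}$ differs from $p_j$ by adding a tile at position $i$ and assume $w_{p_j} = \Psi(p_j)$.  Then $w_{p_{j+1}} = e_i\,w_{p_j}$, and I would verify by direct inspection that capping positions $i, i{+}1$ of $\Psi(p_j)$ produces exactly $\Psi(p_{j+1})$, the new cap acquiring a right (resp.\ left) blob when the tile lies above (resp.\ below) $\po$.  This is a local calculation: only the arcs incident to positions $i$ and $i{+}1$ in $\Psi(p_j)$ are affected, and their rewiring under $e_i$ matches the tile-flip combinatorially.

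\textbf{Principal obstacle.}  The delicate point in Step 2 is ensuring that no spurious scalar emerges from the straightening relations in Table \ref{blobtab} or the topological relation \eqref{topquot} when $e_i$ is applied.  One must check that under the lowest-available-position rule the two strands at $i$ and $i{+}1$ in $\Psi(p_j)$ are always undecorated at the point of capping (ruling out factors of $\dl,\dr,\kl,\kr$), that the cap does not close a contractible loop (no factor of $\dd$), and that no adjacent sub-configuration matches the left-hand side of \eqref{topquot} prematurely (no $\kk$).  I would prove this by maintaining a loop invariant: at stage $j$ the portion of $\Psi(p_j)$ to the right of the next tile-addition site is identical to the corresponding sub-arcs of the bottom of $\EE_n$, so the strands about to be capped are always fresh and undecorated.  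Granted this invariant, each $e_i$-action is clean and the induction goes through, yielding $w_p = \Psi(p)$ and therefore $\ppi_n = \basis_n(0)$.
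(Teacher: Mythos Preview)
The paper gives no argument here --- the statement is marked \qed and treated as a routine path/half-diagram correspondence. Your architecture (explicit bijection $\Psi$, then induction along $\PP(p)$) is the natural way to make this rigorous, but two of your concrete claims fail.

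First, your description of $\Psi$ is wrong. For $n=2$, the starting half-diagram $E'_2=d_0$ is the \emph{left}-blobbed cup, and $w_{(0,1,0)}=e_1 d_0$ is the \emph{undecorated} cup, not right-blobbed as your ``envelope above $p_0$ $\Rightarrow$ right blob'' rule predicts; likewise $w_{(0,-1,-2)}=f\,d_0$ is the \emph{both}-blobbed cup, not merely left-blobbed. The generators $e_1,\dots,e_{n-1}$ strip or merge decorations while $e_n=f$ adds a right blob, and $e_0$ is never used in the tile-addition --- so the bijection does not obey your above/below dichotomy. Second, your loop invariant (``strands at $i,i{+}1$ are undecorated before capping'') is false from the outset, since every arc of $d_0$ carries a left blob; and non-unit scalars genuinely appear. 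For $n=4$ and $p=(0,-1,-2,-3,-4)$ the lowest-position rule begins with $e_2$ applied to the pair of left-blobbed cups $(1,2),(3,4)$, merging them into a single arc with two left blobs, which straightens with a factor of $\delta_L$. The correct resolution is not that scalars are absent but that they are non-zero units under the standing assumptions (in the DN normalisation $\kappa_L=\kappa_R=1$, and the remaining factors are products of the parameters) --- compare the Remark later in this section about ``DN rescaling factors''. What actually needs proving is injectivity of $p\mapsto(\text{diagram underlying }w_p)$ together with the count $|\PP_n|=2^n=\dim\Wb$, not the vanishing of scalars.
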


\subsection{Path basis for $\Wb$ for generic $\ddd$} $\;$

Here we will use a notion of \emph{generic} $\ddd \in k^6$
\cite{Hartshorne}.
A point is \emph{generic} if it lies in the (Zariski) open subset
excluding   
a certain variety
(in our case the variety given by the collection of denominators in
a construction below --- see (\ref{eq:r(u)k(u)})).
The utility is that every $\ddd$ in $\C^6$ is the limit  of a set of
generic points, so that certain identities $f(\ddd)=0$ that hold generically
will hold at every point where  
$f$ makes sense.

We define a formal subset of the $b_n^x$-module 
$\Wb = \bnx \EE_n$ for generic $\dd$. 
To a path $p$ we associate an element $v_p$, defined recursively 
through $ \PP(p)$ as follows:
\begin{align*}
  v_{p_0}&= \EE_n,\\
  v_{p'}&=Y_iv_p\text{ if $p'$ is obtained from $p$ by adding a tile at position $i$},
\end{align*}
where $Y_i$ is one of the following operators:
\begin{itemize}
\item $X_i=e_i-r(w_1-h_{i-1}) 1 \;$ if a full tile is added from above
  at position $i$;
\item $X_i=e_n-k(w_1-h_{n-1}) 1 \;$ if a half tile is added from above at the right boundary;
\item $X'_i=e_i-r(-w_1+h_{i-1}) 1 \;$ if a full tile is added from below
  at position $i$;
\item $X'_i=e_n-k(-w_1+h_{n-1}) 1 $   if a half tile is added from below at the right boundary,
\end{itemize}
where
\begin{equation}
  r(u)=\frac{[u+1]}{[u]}\text{ , $\;$  and }
k(u)=-\frac{[(u-w_2+\theta)/2][(u-w_2-\theta)/2]}{[u][w_2+1]}.
  \label{eq:r(u)k(u)}
\end{equation}
Define
$$
\PPi_n = \{ v_p | \; p \in \PP_n \}
$$ 
Comparing the constructions
for $\ppi_n$ and $\PPi_n$
we have immediately from Theorem~\ref{th:dbb}:
\begin{thm}
When defined, the set $\PPi_n$ can be obtained from $\ppi_n$ by an
upper-unitriangular transformation;
and hence is a basis for $\Wb$.
\qed
\end{thm}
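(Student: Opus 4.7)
The strategy is to exploit that each $X_i$ differs from $e_i$ only by a scalar, then expand and compare term-by-term with the construction of $\ppi_n$. Let $\PP(p)=(\po,p_1,\dots,p_k=p)$ be the canonical tile-addition sequence, and let $i_j$ be the position of the tile added at step $j$; denote the scalar in the corresponding operator by $\alpha_j\in k$, so that $X_{i_j}=e_{i_j}-\alpha_j\cdot 1$ (with $e_{i_j}$ replaced by the right-boundary generator in the half-tile cases) with $\alpha_j$ read off from (\ref{eq:r(u)k(u)}). Then by iterating the recursions I have
\[
v_p \;=\; X_{i_k}X_{i_{k-1}}\cdots X_{i_1}\,\EE_n, \qquad w_p \;=\; e_{i_k}e_{i_{k-1}}\cdots e_{i_1}\,\EE_n.
\]
Expanding the product of binomials gives
\[
v_p \;=\; \sum_{S\subseteq\{1,\dots,k\}} (-1)^{k-|S|}\!\left(\prod_{j\notin S}\alpha_j\right)\!\left(\prod_{\substack{j\in S\\\text{in decreasing order}}} e_{i_j}\right)\EE_n ,
\]
and the summand $S=\{1,\dots,k\}$ reproduces $w_p$ with coefficient $+1$.

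To control the remaining summands I introduce a filtration adapted to $\ppi_n$. For $q\in\PP_n$ let $|q|$ be the area of the envelope between $q$ and $\po$, equivalently the length of $\PP(q)$; set $F_m=\mathrm{span}_k\{w_q:|q|\le m\}\subseteq\Wb$. By Theorem~\ref{th:dbb} the $F_m$ form an exhaustive filtration, and the key lemma I would prove is the single-step growth estimate
\[
e_i\cdot F_m \;\subseteq\; F_{m+1} \quad\text{for every generator of }\bnx.
\]
Granted this, iterating from $\EE_n=w_{\po}\in F_0$ gives $\prod_{j\in S}e_{i_j}\,\EE_n\in F_{|S|}$, so every non-leading summand above (those with $|S|<k$) lies in $F_{k-1}$. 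Hence $v_p-w_p\in F_{k-1}$, meaning there exist scalars $\alpha_{p,q}\in k$ with
\[
v_p \;=\; w_p \;+\; \sum_{|q|<|p|}\alpha_{p,q}\,w_q .
\]
Ordering $\PP_n$ by any linear refinement of the area function $|\cdot|$, the transition matrix from $\ppi_n$ to $\PPi_n$ is upper unitriangular, so $\PPi_n$ is a basis of $\Wb$.

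The main obstacle is the filtration property $e_i F_m\subseteq F_{m+1}$ (and similarly for the right-boundary generator). This is a statement about the combinatorial effect of a single generator on the diagram basis: I would verify it by applying $e_i$ to the diagram representing an arbitrary $w_q$ and simplifying via the straightening rules of Table~\ref{blobtab} together with the topological relation~(\ref{topquot}). The case analysis is local at position $i$ and shows that the concatenation either (i) produces a single half-diagram that differs from that of $w_q$ by the addition of one tile at site $i$, landing in $F_{|q|+1}$, or (ii) introduces a scalar factor times a diagram whose envelope is no larger than that of $w_q$, landing in $F_{|q|}$. Boundary sites are handled identically after replacing $e_i$ by the appropriate boundary generator; the topological relation is only needed when a boundary decoration collides with an existing one. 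Once this lemma is in place, the rest of the argument is the purely algebraic expansion above.
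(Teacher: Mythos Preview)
Your approach is essentially the one the paper intends: both arguments rest on the single observation that $X_i = e_i - \alpha_j\cdot 1$, so that expanding the product for $v_p$ gives $w_p$ plus terms built from strictly fewer generators applied to $\EE_n$. The paper simply declares this ``immediate from comparing the constructions'' and moves on; you have made explicit the mechanism (the area filtration $F_m$ and the growth estimate $e_i F_m\subseteq F_{m+1}$) that turns this observation into a proof.

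One point in your sketch of the filtration lemma deserves care. Your claim that ``the case analysis is local at position $i$'' is not accurate. When the path $q$ has a slope at position $i$ --- and such $q$ do arise among the subsequence products $\prod_{j\in S}e_{i_j}\,\EE_n$, since dropping factors from a valid tile sequence need not leave a valid one --- the action of $e_i$ on the corresponding half-diagram merges two arcs whose far endpoints may lie well away from $i$. The resulting path $q'$ can differ from $q$ at many heights simultaneously, so neither of your cases (i) ``adds one tile at site $i$'' nor (ii) ``scalar times a diagram with no larger envelope'' is self-evidently exhaustive from a local inspection. The conclusion $|q'|\le |q|+1$ is in fact true (one can see it by noting that any word of length $m$ in $e_1,\ldots,e_n$ applied to $\EE_n$ lands in $F_m$, since $|q|$ is precisely the minimal word length reaching $w_q$), but it needs a global argument rather than a site-by-site one. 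This is a gap in the justification of the lemma, not in the overall strategy.
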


\begin{thm}
In general there are other ways of adding tiles to pass 
from $\po$ to each $p$ (cf. the ordered sequence $\PP(p)$). 
The construction does not depend on the choice of route. 
\end{thm}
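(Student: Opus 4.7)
The plan is to reduce route-independence of the construction to two sub-claims: a combinatorial one about orderings of tile additions, and an algebraic one about commutation of the operators $X_i$.

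First I would observe that the multiset of tiles needed to pass from $\po$ to a given $p$ is determined by the region between the two paths. A tile at column $i$ can be added only when the current path has a local extremum at position $i$; hence within a single column the tiles stack from the bottom up. This places a natural (ranked) partial order on the tile set, and any valid sequence of tile additions (in particular $\PP(p)$) is a linear extension of this poset.

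Next I would invoke the classical fact that any two linear extensions of a poset are connected by a sequence of adjacent transpositions of incomparable elements. Two incomparable tiles necessarily lie in distinct columns; moreover, a local path analysis shows that if tiles at columns $i$ and $j$ are simultaneously eligible then $|i - j| \geq 2$ (a local minimum at $i$ is incompatible with a local extremum at $i+1$, since the two required inequalities on $h_i, h_{i+1}$ contradict one another). So it suffices to check that the operators $X_i, X_j$ commute when $|i - j| \geq 2$. For the commutation itself, the $e_i$-part of $X_i$ satisfies $e_i e_j = e_j e_i$ for $|i-j|\ge 2$ (a standard relation in these Temperley--Lieb-type algebras), and likewise the right-boundary generator appearing in the half-tile operator commutes with $e_i$ for $i \le n-2$. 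The scalar $r(w_1 - h_{i-1})$ depends only on $h_{i-1}$, and since adding a tile at column $j$ modifies only $h_j$ and $|i-j|\ge 2$ ensures $j \ne i-1$, the scalar is unaffected by whether the $j$-tile has been processed first. Symmetrically for $X_j$, and the same reasoning applies to the $k$-scalar at the right boundary. Hence $X_i X_j v = X_j X_i v$ on the relevant intermediate state, and the product defining $v_p$ is constant along each adjacent transposition, hence along any chain of them, proving route-independence.

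The main obstacle is the combinatorial bookkeeping in the first two steps: setting up the poset of tiles precisely, accommodating the half-tiles at the right boundary, and verifying cleanly that incomparable tiles are separated by at least two columns. Once that is in hand, the algebraic step is a direct application of the algebra relations together with the observation that heights are modified only locally by each tile addition. The same argument, stripped of the scalar terms, establishes simultaneously that the diagram construction of $w_p$ (using the $e_i$ alone) is also route-independent, so in fact Theorem~\ref{th:dbb} is a corollary of the same reasoning.
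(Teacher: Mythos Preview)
Your argument is correct and is a fleshed-out version of the paper's one-line proof, which simply observes that whenever two routes diverge the corresponding factors commute. One small imprecision worth tightening: a local minimum at $i$ is in fact compatible with a local \emph{maximum} at $i+1$; the reason two simultaneously eligible tiles cannot sit at adjacent columns is rather that within a single envelope only one type of extremum receives tiles (minima for above-envelopes, maxima for below-envelopes), and adjacent columns cannot belong to envelopes of opposite type since $|p_j-(p_0)_j|$ and $|p_{j+1}-(p_0)_{j+1}|$ differ by at most $2$.
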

\proof For each $p$ note that if there are two routes to $p$ from some
lower path then the different sequences of multiplications involve
pairwise commuting factors. \qed

\begin{rem*}If the scalar term is omitted in $X_i$, this construction builds 
the diagram basis, up to the DN rescaling factors. 
We shall later keep track of these scalars explicitly,
and hence recover `integral-valued' Gram matrices from 
certain nominal Gram matrix calculations. 
It is interesting to contrast this with the path basis for
the 
Temperley--Lieb
case
in \cite{marbk}.
There the orthogonal basis is orthonormal, 
so the nominal Gram matrix is the identity matrix,
and one {\em only} has to work out the basis scaling factor. 
\end{rem*}

\begin{thm}[{\cite[Theorem 5.9]{degiernichols}}]
  Let $p=(h_0,h_1,\dots,h_n) \in \PP_n$.
  Then the generators $e=e_0,\; e_1,\dots,e_n=f$ have the following
  action on $v_p$: 
  \begin{itemize}
  \item Each $v_p$ is an eigenvector for the left blob generator $e_0$:
    \begin{enumerate}
    \item If $h_1=-1$ then $e_0v_p=\frac{[w_1]}{[w_1+1]}v_p$.
    \item If $h_1=1$ then $e_0v_p=0$.
    \end{enumerate}
  \item The action of $e_i$ $(1\leq i\leq n-1)$ on $v_p$ is zero if
    $p$ has positive or negative slope at position $i$,
    i.e. $|h_{i-1}-h_{i+1}|=2$. If this is not the case, then let $p'$
    be the path obtained by adding a tile to $p$ at position $i$. Then
    $e_i$ acts on the pair $\{v_p,v_{p'}\}$ in the following way:
    \begin{enumerate}
    \item If $h_{i-1}\geq0$ then
      \begin{align*}
        e_iv_p&=v_{p'}+r(w_1-h_{i-1})v_p\\
        e_iv_{p'}&=r(-w_1+h_{i-1})v_{p'}+r(-w_1+h_{i-1})r(w_1-h_{i-1})v_p.
      \end{align*}
    \item If $h_{i-1}<0$ then
      \begin{align*}
        e_iv_p&=v_{p'}+r(-w_1+h_{i-1})v_p\\
        e_iv_{p'}&=r(w_1-h_{i-1})v_{p'}+r(-w_1+h_{i-1})r(w_1-h_{i-1})v_p.
      \end{align*}
    \end{enumerate}
  \item Let $p'$ be the path obtained by adding a half tile to $p$ at
    the right boundary. Then $e_n$ acts on the pair $\{v_p,v_{p'}\}$
    in the following way:
    \begin{enumerate}
    \item If $h_{n-1}\geq0$ then
      \begin{align*}
        e_nv_p&=v_{p'}+k(w_1-h_{n-1})v_p\\
        e_nv_{p'}&=k(-w_1+h_{n-1})v_{p'}+k(-w_1+h_{n-1})k(w_1-h_{n-1})v_p.
      \end{align*}
    \item If $h_{n-1}<0$ then
      \begin{align*}
        e_nv_p&=v_{p'}+k(-w_1+h_{n-1})v_p\\
        e_nv_{p'}&=k(w_1-h_{n-1})v_{p'}+k(-w_1+h_{n-1})k(w_1-h_{n-1})v_p.
      \end{align*}
    \end{enumerate}
  \end{itemize}
\label{thm:pathaction}
\end{thm}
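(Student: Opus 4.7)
The plan is to proceed by induction on the length of the construction sequence $\PP(p)$ from $p_0$ to $p$, using standard Temperley--Lieb-type relations in $\bx_n$ together with one key arithmetic identity. First I would establish $r(u)+r(-u)=[2]$: writing $[m]=(q^m-q^{-m})/(q-q^{-1})$ a short calculation gives $[u+1]-[1-u]=[2][u]$, hence
\begin{equation*}
r(u)+r(-u) \;=\; \frac{[u+1]-[1-u]}{[u]} \;=\; [2].
\end{equation*}
A parallel identity $k(u)+k(-u) = $ (a constant involving $\delta_R$) handles the right boundary generator $e_n$ in the same way.

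For the base case, the fundamental path has $h_1(p_0)=-1$ and nowhere a slope $\pm 2$, so only the ``non-vanishing'' formulas need to be checked at $v_{p_0}=E_n$. The blob relation $e_0^2=\delta_L e_0$, together with the DN assignment $\delta_L=[w_1]/[w_1+1]$ and the fact that $E_n$ begins with a left-decorated arc, gives $e_0 E_n = \frac{[w_1]}{[w_1+1]} E_n$, matching case (1). For each $e_i$ at an admissible position of $p_0$, the formula then holds by the definition $v_{p'} = X_i v_{p_0}$.

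For the inductive step, write $v_{p'}=X_j v_p$ and split on the relationship between $i$ and $j$. If $i=j$: the ``first'' equation $e_i v_p = v_{p'} + r(\cdot)v_p$ is immediate from $X_j = e_j - r(\cdot)\cdot 1$, and the ``second'' equation follows from
\begin{equation*}
e_i v_{p'} \;=\; e_i X_i v_p \;=\; (e_i^2 - r(\cdot)e_i)v_p \;=\; ([2]-r(\cdot))\,e_i v_p \;=\; r(-w_1+h_{i-1})\,e_i v_p
\end{equation*}
by $e_i^2=[2]e_i$ and the identity above; substituting the inductive expression for $e_i v_p$ matches the claim. If $|i-j|\geq 2$, then $e_i$ commutes with $X_j$ and the formula for $e_i v_{p'}$ follows by applying $X_j$ to the inductive formula for $e_i v_p$. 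The awkward case is $|i-j|=1$, where $p'$ has slope $\pm 2$ at position $i$ and the theorem predicts $e_i v_{p'}=0$. Expanding $v_{p'}=(e_j-r(\cdot))v_p$ and using the braid-like relation $e_ie_je_i=e_i$ together with the inductive formula $e_iv_p = v_{p''} + r(\cdot')v_p$ (where $p''$ adds a tile at $i$) reduces this to a cancellation of scalars, which succeeds precisely because of the specific choice of $r$ in the definition of $X_j$. Separately, the diagonal action of $e_0$ is verified by a parallel induction on $\PP(p)$, invoking $e_0e_1e_0=e_0$ to force $e_0 v_p = 0$ whenever $p$ begins with an up-step; the right boundary $e_n$ is treated analogously, with the $\theta$-dependence entering through the topological relation encoded by $\kappa_{LR}$.

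The hard part will be the vanishing subcase $|i-j|=1$: keeping track of the sign of $h_{i-1}$ (which toggles $r(u)$ between $r(w_1-h_{i-1})$ and $r(-w_1+h_{i-1})$) and of whether the relevant envelope lies above or below $p_0$ requires careful bookkeeping, and the exact cancellation is the combinatorial heart of the theorem.
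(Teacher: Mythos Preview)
The paper does not give its own proof of this statement: it is quoted verbatim as \cite[Theorem~5.9]{degiernichols} and used as a black box. So there is no ``paper's proof'' to compare against.

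That said, your strategy is a sound one and your key algebraic identity is correct: from $[u+1]+[u-1]=[2][u]$ one gets $r(u)+r(-u)=[2]$, which is exactly what makes the ``second'' formula $e_i v_{p'}=([2]-r(\cdot))\,e_i v_p$ come out right, and an analogous identity $k(u)+k(-u)=\delta_R$ handles $e_n$. Your treatment of $i=j$ and $|i-j|\ge 2$ is fine.

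There is a genuine gap in your $|i-j|=1$ discussion. First, it is not true that $p'$ always acquires a slope at $i$: if $p$ already had a slope at $i$, then adding a tile at $j$ can \emph{remove} it, and you then need the ``first'' formula $e_i v_{p'}=v_{p''}+r(\cdot)v_{p'}$ --- this case is easy (it is the definition of $v_{p''}$ via $X_i$, using route-independence), but you should say so. Second, and more seriously, in the case you do treat (showing $e_i v_{p'}=0$), the argument you sketch is circular as written: expanding $e_i v_{p'}=e_i(e_j-c_j)v_p$ and invoking $e_ie_je_i=e_i$ leads you to need $e_j v_{p''}$, where $p''$ is the companion path at the \emph{same} tile-count level as $p'$. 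So the vanishings $e_i v_{p'}=0$ and $e_j v_{p''}=0$ are coupled and must be established simultaneously, not one from the other by straight induction. The cancellation does work, but it hinges on the specific relation between $c_i=r(\pm(w_1-h_{i-1}))$ and $c_j=r(\pm(w_1-h_{j-1}))$ (note $h_{i-1}$ and $h_{j-1}$ differ by $1$ here), and you have not exhibited it. One clean way around the circularity --- and likely closer to what \cite{degiernichols} actually do --- is to turn the argument inside out: take the free $k$-module on paths, \emph{define} operators by the formulas in the theorem, check the $\bx_n$ relations directly (this is where $r(u)+r(-u)=[2]$ and the precise shift in $r$ between neighbouring positions are used), and then identify the resulting module with $W^{(n)}(b)$ via $p_0\mapsto E'_n$.
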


\section{Restricting standard modules to the blob algebra}

The (left) blob algebra $b_n$
is the subalgebra of $\bnx$ 
generated by $\{e_0, e_1, \ldots, e_{n-1}\}$ \cite{martsaleur}. 
The generators $\{ e_1, \ldots, e_{n-1}, e_n \}$  
generate another copy of $b_n$ which we will
call the \emph{right blob algebra}.

In \cite[\S 8]{gensymp} the restriction to $b_n$ is used to
determine the dimensions of the standard modules, $\WW$.  
There it is shown that each
restricted $\WW$ is filtered by standard $b_n$-modules
(as defined in \cite{martsaleur}  
--- the construction is analogous to \S\ref{sect:cell}). 
We follow the notation of \cite{blobcgm} and use $W_{\pm t}(n)$ 
for the standard $b_n$-modules.  
Recall that $W_{t}(n)$ is the standard blob module with half diagram basis
that has $n$ northern nodes and $t$ (undecorated) propagating lines.
$W_{-t}(n)$ is the standard blob module with half diagram basis that
has $n$ northern
nodes and $t-1$
undecorated
propagating lines and one decorated
propagating line.

The restriction will again be useful here. 
 Any $\bnx$-homomorphism is also a left (right)
blob homomorphism upon
restriction, and thus must respect any left (right) blob structure.

Let $d$ be a half diagram that generates some $\WW$, 
as in \S\ref{sect:cell}.  
We define
$\uri(d)$ to be the number of lines crossing the $1$-wall
(in the sense of \cite{gensymp}, 
i.e. the right wall), 
not counting any lines that
are part of non-contractible loops.  
We similarly define $\uro(d)$ as the number of lines crossing the left
wall.

When we restrict  
$\WW$ 
 to the left blob algebra then it is
filtered by $\uri$ and each section is isomorphic to a standard blob
module. A similar
situation occurs when we restrict to the right blob algebra. 
We have the following. 

\begin{prop}[\cite{gensymp}]
The $b_n$-standard content of   
$\WW$
 is as in  
Table~\ref{tab:filtration}. \qed
\end{prop}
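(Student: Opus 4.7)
The natural strategy is to build a $b_n$-module filtration of $\WW$ indexed by the statistic $\uri$, and then identify each associated graded layer as a (direct sum of) standard blob module(s), matching the count in Table~\ref{tab:filtration}.

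\emph{Step 1 (the filtration).} For each integer $j\ge 0$, let $F_j\subseteq \WW$ be the $k$-span of those basis half-diagrams $d$ with $\uri(d)\ge j$. I would first verify that each $F_j$ is stable under $b_n=\langle e_0,e_1,\ldots,e_{n-1}\rangle$. The essential point is that each generator $e_i$ with $i<n$ is supported away from the right wall: stacking $e_i$ on a half-diagram cannot manufacture a new line crossing the right wall; it can only rearrange or join pre-existing ones (possibly producing a closed loop that strictly decreases $\uri$). Hence $F_j$ is a $b_n$-submodule, and we obtain a descending filtration
\[
\WW = F_0 \supseteq F_1 \supseteq \cdots \supseteq F_{j_{\max}} \supseteq 0.
\]

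\emph{Step 2 (subquotients are standard).} For each $j$, I would identify $F_j/F_{j+1}$ with a direct sum of standard left-blob modules. Any basis half-diagram with $\uri(d)=j$ splits uniquely into a right-wall \emph{skeleton} $S$ -- recording the pattern of the $j$ right-wall-touching lines, their right-blob decorations and the arcs tethered to them -- together with a residual half-diagram $d'$ on the remaining nodes. Modulo $F_{j+1}$ the $b_n$-action fixes $S$ and acts on $d'$ exactly as on a standard $b_n$-module half-diagram. Grouping by $S$ therefore exhibits $F_j/F_{j+1}$ as a direct sum of standard blobs $W_{\pm t}(n)$, one summand per admissible $S$. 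The value of $t$ is $m+\tfrac12(\ee_1+\ee_2)$ reduced by the number of propagating lines absorbed into $S$, while the sign $\pm$ is dictated by $\ee_1$ via whether the leftmost propagating line of $d'$ carries a left blob.

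\emph{Step 3 (enumeration).} To reproduce Table~\ref{tab:filtration}, enumerate admissible skeletons $S$ for each valid $j$. The parity of $j$ is forced by $\ee_2$, and the residual data admits a direct combinatorial count. Summing the resulting multiplicities of each $W_{\pm t}(n)$ across all $(j,S)$ gives the tabulated content.

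The main obstacle is Step 2: one must verify that each $F_j/F_{j+1}$ really splits as the claimed direct sum of standard modules (with no hidden non-trivial extensions) and that the sign attached to each section is correctly read off from $\ee_1$ irrespective of the interposed right-wall skeleton. Once the skeleton/residue bijection is set up and the $b_n$-action on $F_j/F_{j+1}$ is checked to coincide with the standard action on the residue diagram, both claims are essentially combinatorial.
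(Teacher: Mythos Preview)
Your overall strategy---filter by $\uri$, then identify each section as a standard $b_n$-module---is exactly the approach the paper sketches just before the statement (the proof itself lives in \cite{gensymp}; this paper gives none). There is, however, a genuine slip in Step~1. You set $F_j=\operatorname{span}\{d:\uri(d)\ge j\}$ and argue $F_j$ is $b_n$-stable because $e_i$ for $i<n$ can never increase $\uri$ and may strictly decrease it by closing off a right-blobbed loop. But that reasoning cuts the other way: if $e_i$ sends a diagram with $\uri=j$ to one with $\uri<j$, the image lies \emph{outside} your $F_j$. Concretely, in $W^{(4,1)}_{+,+}$ take the basis element with a right-blobbed arc on $(3,4)$ and propagating lines at $1,2$; applying $e_3$ gives $\kappa_R$ times the same underlying shape with the blob removed, which is still a basis element of $W^{(4,1)}_{+,+}$ but now has $\uri=0$. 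The filtration that actually consists of submodules is the ascending one $F_j=\operatorname{span}\{d:\uri(d)\le j\}$; the sections are the same, only submodule and quotient roles swap.

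Step~2 is also more elaborate than needed and slightly off. Each $\uri$-layer is (per the paper and \cite{gensymp}) a \emph{single} standard $b_n$-module, not a direct sum indexed by right-wall skeletons: for fixed $\uri$ the right-wall data is essentially forced, and the residual diagram runs over exactly one $W_{\pm t}(n)$. Moreover $|t|$ \emph{increases} with $\uri$---a right-blobbed arc, frozen in the quotient, behaves like two extra propagating lines from the $b_n$ viewpoint---so $|t|$ ranges from the number of propagating lines of $\WW$ up to $n$, matching Table~\ref{tab:filtration}. Your description of $t$ as ``$m+\tfrac12(\ee_1+\ee_2)$ reduced by the number of propagating lines absorbed into $S$'' has this backwards.
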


\begin{table}
\begin{tabular}{|c||c|c||c|c|} 
\hline & left blob module & &right blob module & \\ \hline & $\ee_2=1$ & $\ee_2=-1$ & $\ee_2=1$ & $\ee_2=-1$ \\ 
\hline $\ee_1=1$ & $\xymatrix@R=10pt{W_{n}(n) \\ \vdots \\ W_{m+3}(n) \\ W_{m+1}(n)}$ & $\xymatrix@R=10pt{W_{n}(n) \\ \vdots \\ W_{m+3}(n) \\ W_{m+1}(n)}$ & $\xymatrix@R=10pt{W_{n}(n) \\ \vdots \\ W_{m+3}(n) \\ W_{m+1}(n)}$ & $\xymatrix@R=10pt{W_{-n}(n) \\ \vdots \\ W_{-(m+3)}(n) \\ W_{-(m+1)}(n)}$ \\ 
\hline $\ee_1=-1$ & $\xymatrix@R=10pt{W_{-n}(n) \\ \vdots \\ W_{-(m+3)}(n) \\ W_{-(m+1)}(n)}$ & $\xymatrix@R=10pt{W_{-n}(n) \\ \vdots \\ W_{-(m+3)}(n) \\ W_{-(m+1)}(n)}$ & $\xymatrix@R=10pt{W_{n}(n) \\ \vdots \\ W_{m+3}(n) \\ W_{m+1}(n)}$ & $\xymatrix@R=10pt{W_{-n}(n) \\ \vdots \\ W_{-(m+3)}(n) \\ W_{-(m+1)}(n)}$ \\ 
\hline
\end{tabular}
\caption{The standard content of $\W{n,m}{\ee_1,\ee_2}$ as a left 
(resp. right) blob module.}\label{tab:filtration}
\end{table}

When the left (or right) blob algebra is semi-simple,
then every standard module is simple.

\section{A necessary block condition}\label{sec:necblocks}

In this section we recall a central element  $Z_n$
(see (\ref{eq:Z_n}) below) 
of $b^x_{n}$. 
We prove Conjecture~6.5
from
\cite{degiernichols} and 
deduce the action of $Z_n$
on cell modules. 
We use this to investigate the block structure. 

\subsection{The central element $Z_n$}

We shall need a surjection from $H(\tilde{C}_n)$ to
$\bnx$ as in 
\cite[Proposition 6.3.2]{gensymp}.  
Further details can be found in \cite[\S2]{degiernichols}
(caveat: there are typos in \cite{degiernichols}; 
cf. e.g. \cite{Lusztig83}).

\begin{defn} \label{de:HeC}
  Let $q$, $Q_1$ and $Q_2$ be indeterminates. The Hecke algebra
  $H(\tilde{C}_n)$ of type $\tilde{C}_n$ over
  $\Z[q^{\pm1}$, $Q_1^{\pm_1}$, $Q_2^{\pm1}]$ is the associative
  algebra with generators $g_0$, $g_1$, $\dots$, $g_n$ and relations:
  \begin{align}
    g_ig_{i+1}g_i&=g_{i+1}g_ig_{i+1},\hspace{1em}1\leq i\leq n-2,\\
    g_0g_1g_0g_1&=g_1g_0g_1g_0,\\
    g_{n-1}g_ng_{n-1}g_n&=g_ng_{n-1}g_ng_{n-1},\\
    g_ig_j&=g_jg_i,\hspace{1em}|i-j|>1,\\
    (g_i-q)(g_i+q^{-1})&=0,\hspace{1em}1\leq i\leq n-2, \label{eq:relg} \\
    (g_0-Q_1)(g_0 + Q_1^{-1})&=0, \label{eq:relg0} \\
    (g_n-Q_2)(g_n + Q_2^{-1})&=0. \label{eq:relgn}
  \end{align}
\end{defn}

For suitable base change and 
choices of the parameters we have successive quotients:
\begin{equation}\label{eq:quotients}
 H(\tilde{C}_n) \to \twoBTL \to b_n^x \to \frac{b_{n}^x}{I_{n}(0)}
\end{equation} 
where 
$\twoBTL$ is defined in  
\cite{mgp2I}.

The  algebra $\bnx$ is defined over a ring $k$ with
parameters $\ddd =(\dd,\dl,\dr,\kl,\kr,\kk)\in k^6$.
For any three units in $k$ we can view $k$ as a
$\Z[q^{\pm1},Q_1^{\pm1},Q_2^{\pm1}]$-algebra by making $q,Q_1$ and
$Q_2$ act as these units. 
For each such triple we understand
 $H(\tilde{C}_n)$ as a $k$-algebra
by base change.

Note that we are using the  Saleur normalisation 
\cite{martsaleur} for generators. 

\begin{prop}\label{prop:surjection}
By abuse of notation let us write 
$q,Q_1,Q_2$ for the actions of these three scalars 
in $k$ defining $\HCn$ as a
$k$-algebra as described above.
If they satisfy
\begin{equation}
\delta=[2], \hspace{1em} (qQ_1-q^{-1}Q_1^{-1})\delta_L=Q_1-Q_1^{-1},
            \hspace{1em} (qQ_2-q^{-1}Q_2^{-1})\delta_R=Q_2-Q_2^{-1}.
\label{eq:pigs}
\end{equation}
then there is a surjective $k$-algebra homomorphism
  $\pi:H(\tilde{C}_n)\longrightarrow b^x_{n}$, given by
  \begin{align}
    \pi(g_i^{\pm1})&=e_i-q^{\mp1}, \label{eq:pig} \\
    \pi(g_0^{\pm1})&=Q_1^{\pm1}-\left(q^{\pm1}Q_1^{\pm1}-q^{\mp1}Q_1^{\mp1}\right)e_0,
      \label{eq:pig0} \\
    \pi(g_n^{\pm1})&=Q_2^{\pm1}-\left(q^{\pm1}Q_2^{\pm1}-q^{\mp1}Q_2^{\mp1}\right)e_n.
      \label{eq:pign}
  \end{align}
(Note that there is no dependence on $\kk $.)
\end{prop}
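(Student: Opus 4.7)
My plan is to invoke the universal property of $H(\tilde{C}_n)$ as a presented algebra: it suffices to verify that the proposed images (\ref{eq:pig})--(\ref{eq:pign}) of the generators in $\bnx$ satisfy every defining relation of Definition \ref{de:HeC}. Once this is done the map $\pi$ exists and is unique; surjectivity is then straightforward, since from $\pi(g_i)=e_i-q^{-1}$ we recover $e_i=\pi(g_i)+q^{-1}$ for $1\le i\le n-1$, and from $\pi(g_0)=Q_1-\alpha e_0$ with $\alpha:=qQ_1-q^{-1}Q_1^{-1}$ we recover $e_0=\alpha^{-1}(Q_1-\pi(g_0))$ whenever $\alpha$ is a unit, and symmetrically for $e_n$ with $\beta:=qQ_2-q^{-1}Q_2^{-1}$. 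The degenerate cases where $\alpha$ or $\beta$ fails to be a unit are confined by (\ref{eq:pigs}) to very restrictive root-of-unity loci and can be handled by specialisation from the generic case. That $\kk$ does not appear in (\ref{eq:pig})--(\ref{eq:pign}) is visible by inspection, because $\kk$ enters $\bnx$ only through the topological relation (\ref{topquot}), which affects non-contractible loops and so cannot show up in the image of any single Hecke generator.

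\textbf{Quadratic relations.} This is where the constraints (\ref{eq:pigs}) originate. For (\ref{eq:relg}), substituting $\pi(g_i)=e_i-q^{-1}$ and expanding $(\pi(g_i)-q)(\pi(g_i)+q^{-1})$ with $e_i^2=\dd e_i$ reduces the identity to $\dd=q+q^{-1}=[2]$, which is the first equation of (\ref{eq:pigs}). For (\ref{eq:relg0}), set $\pi(g_0)=Q_1-\alpha e_0$ and expand the quadratic polynomial using $e_0^2=\dl e_0$; after collecting coefficients the vanishing collapses to a single scalar condition, which after sign-checking against the Saleur normalisation is exactly the second equation of (\ref{eq:pigs}). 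The analogous computation with $e_n^2=\dr e_n$ produces the third equation and handles (\ref{eq:relgn}). These expansions simultaneously verify $\pi(g_i)\pi(g_i^{-1})=1$, so formulas (\ref{eq:pig})--(\ref{eq:pign}) are internally consistent.

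\textbf{Braid relations and the main obstacle.} The far-commutations $g_ig_j=g_jg_i$ for $|i-j|>1$ are immediate from the commutations of the corresponding $e_i$ in $\bnx$. The interior three-term braids $g_ig_{i+1}g_i=g_{i+1}g_ig_{i+1}$ for $1\le i\le n-2$ reduce, after substitution, to the standard Temperley--Lieb consequences $e_ie_{i\pm1}e_i=e_i$ and $e_i^2=\dd e_i$, which hold in $\bnx$. The main technical obstacle is the pair of four-term boundary braid relations $g_0g_1g_0g_1=g_1g_0g_1g_0$ and $g_{n-1}g_ng_{n-1}g_n=g_ng_{n-1}g_ng_{n-1}$. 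For each, I would expand both sides as polynomials in the monomials $1,e_0,e_1,e_0e_1,e_1e_0,e_0e_1e_0,\dots$, repeatedly apply the blob-type reductions $e_0e_1e_0=\dl e_0$, $e_1e_0e_1=e_1$, $e_i^2=\dd e_i$ (and analogously $\dr,e_{n-1},e_n$ on the right), and verify that the coefficient of each surviving word vanishes as a polynomial identity in $q$ and $Q_1$ (respectively $q$ and $Q_2$). The Saleur normalisation is chosen precisely so that these coefficient identities collapse under (\ref{eq:pigs}); given the flagged typos in \cite{degiernichols}, I would sign-check the result against the parallel computation in \cite{Lusztig83,mgp2I}.
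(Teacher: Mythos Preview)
Your approach is essentially the same as the paper's: verify that the proposed images satisfy the defining relations of $H(\tilde C_n)$ and then conclude via the universal property. The paper's argument is explicitly labelled an ``Outline'' and only treats the quadratic relations (\ref{eq:relg})--(\ref{eq:relgn}), doing so by an eigenvalue shortcut rather than a direct expansion: since $e_i^2=\delta e_i$, the element $e_i$ has eigenvalues $\delta,0$, so $e_i-q^{-1}$ has eigenvalues $\delta-q^{-1}=q$ and $-q^{-1}$, matching those of $g_i$; and similarly for the boundary generators using $e_0^2=\dl e_0$ and $e_n^2=\dr e_n$. The braid relations and surjectivity are not spelled out there, so your proposal is in fact more complete in scope.

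One correction to your sketch: the relation you wrote as $e_0e_1e_0=\dl e_0$ is not a relation in $\bnx$; the element $e_0e_1e_0$ does not reduce. The relevant blob-type relations are $e_0^2=\dl e_0$ and $e_1e_0e_1=\kl e_1$ (not $e_1e_0e_1=e_1$ unless you have already passed to the DN normalisation with $\kl=1$). When you expand the four-term boundary braid you must therefore keep words such as $e_0e_1e_0e_1$ and $e_1e_0e_1e_0$ alive and reduce them via $e_1e_0e_1=\kl e_1$ to $\kl\, e_0e_1$ and $\kl\, e_1e_0$ respectively; the cancellation then uses the full parameter identity (\ref{eq:pigs}) together with the $\kl$ factor, not $\dl$ alone. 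This does not affect the overall strategy, but it will matter if you actually carry out the coefficient comparison.
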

\proof (Outline) Consider (\ref{eq:relg}):
\[
\pi(g_i^{})\pi(g_i^{}) = (e_i - q^{-1})(e_i -q^{-1}) 
       =  \delta e_i -2q^{-1} e_i  +q^{-2}  = (q-q^{-1}) e_i +q^{-2}
\]
\[
\pi(g_i^2) \stackrel{(\ref{eq:relg})}{=} 
       \pi((q - q^{-1}) g_i +1) = (q  - q^{-1}) (e_i -q^{-1}) +1
           =   (q-q^{-1}) e_i +q^{-2}
\]
Alternatively here note that by (\ref{eq:relg}) $g_i$ has eigenvalues 
$q$ and $-q^{-1}$; while $e_i$ has eigenvalues $\delta,0$.
Then $e_i - q^{-1}$ has eigenvalues $q,-q^{-1}$, by (\ref{eq:pigs}), 
as required.
Similarly by (\ref{eq:relg0}) $g_0$ has eigenvalues $Q_1,-Q_1^{-1}$;
while $e_0$ has eigenvalues $\delta_L , 0$.
Then $Q_1 - (q^{} Q_1 -q^{-1} Q_1^{-1})e_0$ 
has eigenvalues $Q_1,-Q_1^{-1}$ as required
provided that (\ref{eq:pigs}) holds.
The verification for $g_n$ is directly analogous.
\qed

The homomorphism $\pi$ allows elements of $H(\tilde{C}_n)$ 
 to act on $\bnx$.  
In particular,  
\begin{defn}[{\cite[Definition 2.8]{degiernichols}}]
  The {\em `Jucys-Murphy elements'} for $H(\tilde{C}_n)$ are:
  \begin{align*}
    J_0&=g_1^{-1}g_2^{-1}\dots g_{n-1}^{-1}g_ng_{n-1}\dots g_2g_1g_0\\
    J_i&=g_iJ_{i-1}g_i,\hspace{1em}1\leq i\leq n-1.
  \end{align*}
\end{defn}
\begin{prop}[{\cite[Proposition 2.10]{degiernichols}}]
\label{pr:dn210}
  The Jucys-Murphy elements $J_i$ are pairwise commuting and obey the following relations:
  \begin{align*}
    [g_0,J_j]&=0,\hspace{1em}j\neq0,\\
    [g_i,J_j]&=0,\hspace{1em}1\leq i\leq n-1,\;j\neq i-1,i,\\
    [g_i,J_{i-1}J_i]&=0,\hspace{1em}1\leq i\leq n-1,\\
    [g_i,J_{i-1}+J_i]&=0,\hspace{1em}1\leq i\leq n-1,\\
    [g_0,J_0+J_0^{-1}]&=0.
  \end{align*}
  In particular, the symmetric polynomials in $J_i$, $J_i^{-1}$
  $(0\leq i\leq n-1)$ are central in $H(\tilde{C}_n)$.
\end{prop}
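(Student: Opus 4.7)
The strategy is to prove the five families of bracket relations in order of increasing difficulty, then derive centrality at the end. The ``additive'' relations $[g_i,J_{i-1}+J_i]=0$ for $1\le i\le n-1$ are formal consequences of (\ref{eq:relg}): substituting $J_i=g_iJ_{i-1}g_i$ and $g_i^2=(q-q^{-1})g_i+1$, both $g_i(J_{i-1}+J_i)$ and $(J_{i-1}+J_i)g_i$ reduce to the symmetric expression $g_iJ_{i-1}+J_{i-1}g_i+(q-q^{-1})g_iJ_{i-1}g_i$. For $[g_0,J_0+J_0^{-1}]=0$, write $J_0=Ag_0$ with $A=g_1^{-1}\cdots g_{n-1}^{-1}g_ng_{n-1}\cdots g_1=Pg_nP^{-1}$ where $P=g_1^{-1}\cdots g_{n-1}^{-1}$; then from (\ref{eq:relgn}) one has $g_n-g_n^{-1}=Q_2-Q_2^{-1}$, so $A-A^{-1}=P(g_n-g_n^{-1})P^{-1}=Q_2-Q_2^{-1}$ is a scalar. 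A direct expansion using $Ag_0^2=(Q_1-Q_1^{-1})Ag_0+A$ reduces $[g_0,J_0+J_0^{-1}]$ to $g_0^{-1}(A-A^{-1})g_0-(A-A^{-1})$, which vanishes since $A-A^{-1}$ is scalar.

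Next I would establish the locality relations $[g_0,J_j]=0$ for $j\ne 0$ and $[g_i,J_j]=0$ for $i\ge 1$, $j\notin\{i-1,i\}$, by induction on $j$ using the recursion $J_j=g_jJ_{j-1}g_j$. When $j>i+1$ or $j<i-1$ the conjugating generators $g_j$ commute with $g_i$ (since $|i-j|>1$), so the commutator with $J_j$ is inherited from a commutator with $J_{j-1}$ and the induction closes; the base reduces to commutation of $g_i$ with $J_0$, handled by sliding $g_i$ through the explicit long word defining $J_0$ using the interior braid relations and, at the two walls, the type-$\tilde{C}$ braid relations $g_0g_1g_0g_1=g_1g_0g_1g_0$ and $g_{n-1}g_ng_{n-1}g_n=g_ng_{n-1}g_ng_{n-1}$.

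The principal obstacle is the pairwise commutativity $[J_i,J_j]=0$, proved by induction on $|i-j|$. For $|i-j|\ge 2$, writing $J_i=g_iJ_{i-1}g_i$ and invoking the locality result $[g_i,J_j]=0$ just established, one obtains $[J_i,J_j]=g_i[J_{i-1},J_j]g_i$, reducing to a smaller case. The base step is $[J_{i-1},J_i]=0$, whose hardest instance is $[J_0,J_1]=J_0g_1J_0g_1-g_1J_0g_1J_0=0$. After substituting $J_0=Ag_0$ and expanding, the required identity becomes a long word equation solvable using the type-$\tilde C$ reflection-type braid relation $g_0g_1g_0g_1=g_1g_0g_1g_0$, the interior braid relations $g_kg_{k+1}g_k=g_{k+1}g_kg_{k+1}$, and the boundary relation at $g_n$; the two sides reduce to a common normal form. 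The remaining adjacent cases $[J_{i-1},J_i]=0$ for $i\ge 2$ then follow by conjugating $[J_0,J_1]=0$ by $g_1g_2\cdots g_{i-1}$ and applying locality to absorb the far-away generators.

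With pairwise commutativity in hand, the multiplicative relations are immediate: $g_iJ_{i-1}J_ig_i^{-1}=g_iJ_{i-1}g_i\cdot J_{i-1}=J_iJ_{i-1}=J_{i-1}J_i$. For the final centrality assertion, any completely symmetric polynomial $p$ in the $J_k^{\pm1}$ may, for each fixed $i\ge 1$, be rewritten as a polynomial in the $g_i$-central quantities $J_{i-1}+J_i$, $J_{i-1}J_i$, $J_{i-1}^{-1}+J_i^{-1}$, $J_{i-1}^{-1}J_i^{-1}$ together with $J_k^{\pm1}$ for $k\notin\{i-1,i\}$; the first four commute with $g_i$ by the additive and multiplicative relations, while the rest commute with $g_i$ by locality. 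A parallel argument at the $g_0$ boundary uses that any symmetric function in $x,x^{-1}$ is a polynomial in $x+x^{-1}$, combined with $[g_0,J_0+J_0^{-1}]=0$ and locality of $J_j^{\pm1}$ for $j\ne 0$. Hence $p$ commutes with every $g_k$, so is central.
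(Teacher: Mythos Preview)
The paper does not supply its own proof of this proposition: it is quoted verbatim as \cite[Proposition~2.10]{degiernichols} and used as a black box. So there is nothing in the paper to compare your argument against, and your write-up is in effect an attempt to reconstruct the de~Gier--Nichols proof.

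Your outline is largely sound. The additive relation $[g_i,J_{i-1}+J_i]=0$ and the boundary relation $[g_0,J_0+J_0^{-1}]=0$ are handled correctly; the reduction of the latter to $g_0^{-1}(A-A^{-1})g_0-(A-A^{-1})$ with $A-A^{-1}$ scalar is clean. The locality argument by induction on $j$, and the derivation of the multiplicative relation and of centrality from the earlier relations, are also fine.

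There is, however, a genuine gap in your treatment of pairwise commutativity. Your claim that the adjacent cases $[J_{i-1},J_i]=0$ for $i\ge 2$ ``follow by conjugating $[J_0,J_1]=0$ by $g_1g_2\cdots g_{i-1}$'' does not work: the recursion is $J_i=g_iJ_{i-1}g_i$, not $g_iJ_{i-1}g_i^{-1}$, so conjugation by the $g_k$ does not shift the Murphy indices. (Concretely, $g_1J_0g_1^{-1}\ne J_1$ in general.) Also, in your induction on $|i-j|$ you must always peel off the generator from the \emph{larger} index: if $i<j-1$ and you write $J_i=g_iJ_{i-1}g_i$, the distance $|i-1-j|$ increases rather than decreases. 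Both issues are fixable --- one standard route is to prove each $[J_{i-1},J_i]=0$ directly by the same long-word manipulation you sketch for $i=1$, using the interior braid relations to push the computation to the appropriate wall --- but as written the argument is incomplete at exactly its hardest step.
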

We hence let $Z_n$ be the central element 
\begin{equation}
  \label{eq:Z_n}
  Z_n=\sum_{i=0}^{n-1}(J_i+J_i^{-1}).
\end{equation}

\subsection{Aside on substitutions}

We can interpret  $[w_1+a]$ ($a\in\Z$) in the following way: 
\[
[w_1+a]=\frac{q^{w_1+a}-q^{-w_1-a}}{q-q^{-1}}=\frac{Q_1q^a-Q_1^{-1}q^{-a}}{q-q^{-1}}.
\]
Similarly we have
\[[w_2+a]=\frac{Q_2q^a-Q_2^{-1}q^{-a}}{q-q^-1}\hspace{1em}\text{and}\hspace{1em}[w_1+w_2+a]=\frac{Q_1Q_2q^a-Q_1^{-1}Q_2^{-1}q^{-a}}{q-q^{-1}}.
\]

\subsection{The $Z_n$-action theorem}

The following lemma is mostly a restatement of
\cite[Proposition 5.19]{degiernichols}.
However we have also included the labels of the irreducible modules.
\begin{lem}
  The generic $b_n$-module with basis
  $\{v_p \mid p \mbox{  of fixed final height } h_n \}$
  in $\W{n}{}(b)$
  is isomorphic to the generic irreducible $b_n$-module $W_{h_n}(n)$.
\end{lem}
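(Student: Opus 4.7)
Set $V_k := \mathrm{span}\{v_p : p\in\PP_n,\ h_n(p)=k\} \subset \Wb$. The first step is to verify that $V_k$ is a $b_n$-submodule. By Theorem~\ref{thm:pathaction}, each generator $e_i$ with $1\le i\le n-1$ sends $v_p$ into the span of $v_p$ and $v_{p'}$, where $p'$ differs from $p$ only by a tile at position $i<n$, leaving $h_n$ unchanged; and $e_0$ acts diagonally (eigenvalue $[w_1]/[w_1+1]$ if $h_1=-1$, eigenvalue $0$ if $h_1=1$). Hence $V_k$ is preserved by $b_n=\langle e_0,e_1,\dots,e_{n-1}\rangle$.

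Next I would match dimensions and invoke semisimplicity. The number of lattice paths of length $n$ from height $0$ to height $k$ is $\binom{n}{(n-k)/2}$, coinciding with $\dim W_k(n)$ in the blob standard \cite{martsaleur}. Generically $b_n$ is semisimple and its standard modules are simple, so $\Wb|_{b_n}$ splits as a direct sum of standards; the multiplicity of each $W_t(n)$ in this decomposition is fixed by the filtration obtained along the lines of \cite[\S8]{gensymp}, and a direct count of paths by final height shows that these multiplicities match exactly the sizes of the subspaces $V_k$. Combined with the dimension equality, this forces each $V_k$ to be an isotypic component consisting of a single copy of some simple $W_t(n)$.

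Finally I would distinguish $W_k(n)$ from $W_{-k}(n)$, which share the same dimension. By the first paragraph the $[w_1]/[w_1+1]$-eigenspace of $e_0$ on $V_k$ has basis $\{v_p : h_1=-1,\ h_n=k\}$ and the $0$-eigenspace has basis $\{v_p : h_1=1,\ h_n=k\}$. The corresponding eigenspace dimensions on $W_t(n)$ are read off from the half-diagram description (the blob on the leftmost propagating line determines which eigenspace carries the image of $e_0$), and comparing these two numbers pins the simple summand down to $W_{h_n}(n)$ under the convention that the sign of $h_n$ encodes the presence of the left blob. The main obstacle is precisely this last identification: aligning the sign conventions so that the combinatorial $e_0$-eigenspace counts on the path side agree with the blob-algebra side (in particular, so that negative final height matches the left-blob-decorated standard). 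Once the conventions are matched, everything reduces to elementary binomial bookkeeping built on the action formulas of Theorem~\ref{thm:pathaction}.
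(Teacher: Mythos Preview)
Your outline is sound in spirit but diverges from the paper's argument at the key identification step, and the part you flag as the ``main obstacle'' is exactly where the paper's method is more efficient.

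Both you and the paper begin the same way: Theorem~\ref{thm:pathaction} shows that $e_0,\dots,e_{n-1}$ preserve the final height, so each $V_k$ is a $b_n$-submodule. For irreducibility, the paper simply quotes \cite[Proposition~5.19]{degiernichols}, which already asserts that the fixed-height subspaces are the generic blob irreducibles. Your attempt to re-derive this via a multiplicity count is unnecessary, and the reference you invoke (\cite[\S8]{gensymp}, cf.\ Table~\ref{tab:filtration}) treats the restriction of $W^{(n,m)}_{\ee_1,\ee_2}$ rather than of $\Wb=S_n(0)$, so that step would need independent justification. It is cleaner to cite de~Gier--Nichols and move on.

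For the labelling, the paper takes a different and more direct route than your $e_0$-eigenspace comparison. It uses the explicit maximal heredity chain of idempotents from \cite[(3.2)]{marwood00},
\[
\Big(\tfrac{1}{\dd^{n/2}}e_1e_3\cdots e_{n-1},\ \tfrac{1}{\dl\dd^{n/2-1}}e_0e_3\cdots e_{n-1},\ \dots,\ \tfrac{1}{\dd}e_{n-1},\ \tfrac{1}{\dl}e_0,\ 1\Big),
\]
whose terms \emph{define} the labels $W_0(n),W_{-2}(n),\dots,W_n(n)$. One then checks, using only the slope-annihilation clause of Theorem~\ref{thm:pathaction}, which idempotent in this chain first fails to kill $V_k$; the presence or absence of the $e_0$ factor in that idempotent fixes the sign of the label. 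Because the heredity chain \emph{is} the labelling convention, no separate matching of sign conventions is required.

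Your proposed method --- comparing $\dim\ker e_0$ on $V_k$ (easy, by Theorem~\ref{thm:pathaction} it is the number of paths with $h_1=1$) with $\dim\ker e_0$ on $W_{\pm k}(n)$ --- can be made to work, but the diagram-side computation is a genuine combinatorial exercise that you have not carried out. So your approach replaces a one-line appeal to the defining heredity chain with a calculation you leave open.
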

\begin{proof}
  Note first that this is indeed a module for the blob algebra, as the
  only elements of the symplectic blob algebra that change the final
  height of a path involve the generator $e_n$, which is not present
  when we consider the restricted action.

Now by \cite[Proposition 5.19]{degiernichols} these modules are the
generic irreducibles for the blob algebra, so it suffices to show that
the labelling matches up. 

First consider the case when $n$ is even. From \cite[(3.2)]{marwood00}
we have a maximal heredity chain of idempotents
\begin{equation}
  \left(\frac{1}{\dd^{n/2}}e_1e_3\dots
    e_{n-1},\frac{1}{\dl\dd^{n/2-1}}e_0e_3e_5\dots
    e_{n-1},\dots,\frac{1}{\dd}e_{n-1},\frac{1}{\dl}e_0,1\right)
  \label{eq:heredity}
\end{equation}
corresponding to the standard modules
$W_0(n),W_{-2}(n),\dots,W_{n-2}(n),W_{-n}(n),W_n(n)$ respectively.
We must therefore show that the module with basis
  $\{v_p \mid p \mbox{  of fixed final height } h_n \}$
is associated to the
correct heredity idempotent.
Suppose first that the final height is $h_n=0$, then this module
contains the element $v_{p_0}$, where $p_0$ is the fundamental
path. By Theorem \ref{thm:pathaction} none of the idempotents in
\eqref{eq:heredity} kill $v_{p_0}$, therefore this module must be
isomorphic to $W_0(n)$.

If now the final height is $h_n>0$, then all paths must either have a
slope at at least $h_n$ points, or start with $h_1=1$ and have a slope
at at least $h_n-1$ points. Since the $e_i$ in the heredity
idempotents commute, we therefore see that any idempotent containing a
product of at least $(n-h_n)/2+1$ of the $e_i$ will kill the basis
elements obtained from these paths, but those containing $(n-h_n)/2$
will not. Therefore the first heredity idempotent that does not
annihilate this module is
\[\frac{1}{\dd^{(n-h_n)/2}}e_{h_n+1}\dots e_{n-1},\]
which corresponds to the left blob module $W_{h_n}(n)$.

If the final height is $h_n<0$, then again all paths must have a slope
at at least $|h_n|$ points, or start with $h_1=-1$ and have a slope at
at least $h_n-1$ points. In this case, any idempotent containing a
product of at least $(n+h_n)/2+1$ of the $e_i$ for $i\neq0$ will kill
the basis elements obtained from these paths, but those containing a
product of $(n+h_n)/2$ of the $e_i$ $(i\neq0)$ and $e_0$ will
not. Therefore the first heredity idempotent that does not annihilate
this module is
\[\frac{1}{\dl\dd^{(n+h_n)/2}}e_0e_{-h_n+1}\dots e_{n-1},\]
which corresponds to the left blob module $W_{h_n}(n)$.

The proof for $n$ odd is similar.
\end{proof}

We now use the path basis to determine submodules of 
$\Wb$ for specific parameter choices.

\begin{prop}[{\cite[Proposition 6.3]{degiernichols}}]
\label{prop:moduleV}
Fix $m,n \in \NN$ and $\ee_2\in\{\pm1 \}$. 
\axioma 
\begin{enumerate}\renewcommand{\labelenumi}{(\roman{enumi})}
  \item  
Choose $\theta$ so that 
   $\left[\left(-m+w_1+\ee_2w_2 \pm \theta\right)/2\right]=0$. 
Then the $\bx_n$-module   $\Wb$ has a submodule
    $V^{(n,m)}_{+,\ee_2}$  with basis 
$
\pi_n^+(m) = \{v_p   
| \; p=(h_0,h_1,\dots,h_n) \mbox{ with } h_n\geq m+1 \}.
$ 
  \item 
Choose $\theta$ so that 
   $\left[\left(-m-w_1+\ee_2w_2 \pm \theta\right)/2\right]=0$. 
Then 
    $\Wb$ has a submodule $V^{(n,m)}_{-,\ee_2}$ with basis
$
\pi_n^-(m) = \{v_p  
| \; p=(h_0,h_1,\dots,h_n) \mbox{ with } h_n\leq    -m-1 \}.
$
\end{enumerate}
\end{prop}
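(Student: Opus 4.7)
The plan is to verify that the subspaces $V^{(n,m)}_{\pm,\ee_2}$ are closed under the action of each generator $e_0,e_1,\dots,e_n$ by direct inspection of the formulas in Theorem~\ref{thm:pathaction}. The essential observation is that the three families of generators behave very differently with respect to the final height $h_n$ of a path, and in fact only $e_n$ is capable of changing $h_n$, so closure is automatic except at one boundary case.

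First I would dispose of $e_0,e_1,\dots,e_{n-1}$. The generator $e_0$ acts diagonally on each $v_p$, so it preserves the span of any subset of the path basis. Each $e_i$ with $1 \le i \le n-1$ acts on a pair $\{v_p,v_{p'}\}$ where $p'$ differs from $p$ by flipping a tile at position $i$; this alters $h_i$ but leaves the final height $h_n$ unchanged. Hence the span of any collection $\{v_p : h_n \text{ satisfies some condition}\}$ is automatically closed under $e_0,\dots,e_{n-1}$.

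The nontrivial work is for $e_n$, which couples $v_p$ with $v_{p'}$ where $p$ has $h_n = h_{n-1}-1$ and $p'$ has $h_n = h_{n-1}+1$. For the subspace $V^{(n,m)}_{+,\ee_2}$ spanned by $\pi_n^+(m)$, a pair $\{v_p,v_{p'}\}$ lies entirely inside or entirely outside the subspace unless $h_{n-1}=m$, in which case $v_p$ (with $h_n=m-1$) is outside while $v_{p'}$ (with $h_n=m+1$) is inside. For closure we must therefore show that at this critical value the coefficient of $v_p$ in $e_n v_{p'}$ vanishes. By Theorem~\ref{thm:pathaction} that coefficient is the product $k(-w_1+h_{n-1})\,k(w_1-h_{n-1})$ (or its analogue for $h_{n-1}<0$), and reading the definition of $k$ in~(\ref{eq:r(u)k(u)}), each factor vanishes precisely when one of the numerator brackets $[(\pm(w_1-m)-w_2\pm\theta)/2]$ is zero. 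Using the antisymmetry $[x]=-[-x]$ to fold the two sign choices of $w_2$ onto the $\ee_2\in\{\pm1\}$ label, the hypothesis on $\theta$ in (i) is exactly the requirement that one such bracket vanishes, so the critical coefficient is killed and closure holds. Statement (ii) is the symmetric case: the corresponding critical pair sits at $h_{n-1}=-m$, the relevant $k$-factor is now controlled by the $-w_1$ hypothesis, and the argument is identical.

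The main obstacle is purely the sign and parity bookkeeping: one has to match (a) the two cases of the piecewise formula for $e_n$ according to the sign of $h_{n-1}$, (b) the two zero-loci $[(u-w_2\pm\theta)/2]=0$ of $k(u)$, and (c) the $\ee_2$ label on the submodule, with the single compact hypothesis $[(-m\pm w_1+\ee_2 w_2\pm\theta)/2]=0$ stated in the proposition. Once this dictionary is written out, the proposition reduces to Theorem~\ref{thm:pathaction} applied at a single critical pair.
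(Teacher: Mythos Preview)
Your proposal is correct and matches the paper's treatment. The paper does not give a full proof, but only the remark that ``the key point is that $k(\pm(w_1-h_{n-1}))$ is zero, and this is equivalent to requiring $[(-m\pm w_1\pm w_2\pm\theta)/2]=0$ for appropriate signs''; your argument is precisely an unpacking of that remark, showing that $e_0,\dots,e_{n-1}$ preserve the final height and that closure under $e_n$ at the single critical pair reduces to the vanishing of the relevant $k$-factor.
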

This statement is slightly modified from \cite{degiernichols}. The key
point is that $k(\pm(w_1 -m))$ is zero, and this is
equivalent to requiring 
  $\left[\left(- m \pm w_1\pm w_2 \pm \theta\right)/2\right]=0$ for
  appropriate signs.

\begin{thm}[{\cite[Theorem 6.4]{degiernichols}}]\label{thm:centralelement}
\axioma\ 
\axiomb\ 
Let $\theta=\thetam$.   
The action of the central element $Z_n$ as defined in \eqref{eq:Z_n}
  on $V^{(n,m)}_{\ee_1,\ee_2}$ as defined in Proposition \ref{prop:moduleV}
is given by
\[
Z_n V^{(n,m)}_{\ee_1,\ee_2}
    =  \alpha^{(n,m)}_{\ee_1,\ee_2} V^{(n,m)}_{\ee_1,\ee_2}
\]
where
$$
\displaystyle\alpha^{(n,m)}_{\ee_1,\ee_2}
   = [n] \frac{[2(-m+\ee_1w_1+\ee_2w_2)]}{[-m+\ee_1w_1+\ee_2w_2]} .
$$
\end{thm}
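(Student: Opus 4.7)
My plan is to combine the centrality of $Z_n$ in $\bx_n$ with the blob-algebra restriction structure supplied by the preceding lemma to reduce the claim to a scalar computation on a single extremal path vector. By Proposition \ref{pr:dn210} the element $\sum_{i=0}^{n-1}(J_i + J_i^{-1})$ is central in $H(\tilde{C}_n)$, so its image $Z_n$ is central in $\bx_n$, and hence $Z_n$ induces a $\bx_n$-module endomorphism of $V^{(n,m)}_{\ee_1,\ee_2}$. The task is to show this endomorphism is scalar multiplication by the displayed $\alpha^{(n,m)}_{\ee_1,\ee_2}$.

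First I would check that $Z_n$ acts as a single scalar. Restrict $V^{(n,m)}_{\ee_1,\ee_2}$ to the left blob subalgebra $b_n = \langle e_0, e_1, \ldots, e_{n-1}\rangle$. Inspection of Theorem \ref{thm:pathaction} shows that each $e_i$ with $0 \leq i \leq n-1$ preserves the final height $h_n$ of a path, so the subspaces of $V^{(n,m)}_{\ee_1,\ee_2}$ spanned by path vectors $v_p$ with fixed $h_n$ are $b_n$-submodules. By the preceding lemma these are, generically, pairwise non-isomorphic irreducible blob cell modules $W_{h_n}(n)$, so Schur's lemma forces $Z_n$ to act on each as a scalar $\alpha_{h_n}$. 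The remaining generator $e_n$ connects the $h_n = h$ and $h_n = h - 2$ subspaces (all heights having the parity of $n$ and thus forming a single chain), and centrality $Z_n e_n = e_n Z_n$ forces $\alpha_h = \alpha_{h-2}$. Hence $Z_n$ acts on all of $V^{(n,m)}_{\ee_1,\ee_2}$ as a single scalar $\alpha$.

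Next I identify $\alpha$ by evaluating on the extremal vector $v_{p^*}$, where for $\ee_1 = +$ we take $p^* = (0, 1, 2, \ldots, n)$ (and for $\ee_1 = -$ its mirror image). On $p^*$ every interior position satisfies the slope criterion $|h_{i-1} - h_{i+1}| = 2$, so $e_i v_{p^*} = 0$ for $1 \leq i \leq n-1$; $e_0 v_{p^*} = 0$ since $h_1 = 1 > 0$; and because $p^*$ is maximal in its envelope no further tile can be added at position $n$, giving $e_n v_{p^*} = k(w_1 - (n-1)) v_{p^*}$. Through $\pi$ each Hecke generator $g_i^{\pm 1}$ therefore acts on $v_{p^*}$ as an explicit scalar, and so does each Murphy element $J_i$. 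Using $g_i|_{v_{p^*}} = -q^{-1}$ for $1 \leq i \leq n-1$ the recurrence $J_i = g_i J_{i-1} g_i$ yields $J_i|_{v_{p^*}} = q^{-2} J_{i-1}|_{v_{p^*}}$, and summing the resulting geometric series gives
\[
\sum_{i=0}^{n-1}(J_i + J_i^{-1})\bigm|_{v_{p^*}} = [n]\bigl(q^{-(n-1)} J_0 + q^{n-1} J_0^{-1}\bigr)\bigm|_{v_{p^*}},
\]
while direct evaluation gives $J_0|_{v_{p^*}} = Q_1\bigl[Q_2 - (qQ_2 - q^{-1}Q_2^{-1}) k(w_1 - (n-1))\bigr]$ and the dual expression for $J_0^{-1}$.

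It remains to insert $Q_i = q^{w_i}$, substitute the explicit form of $k$, and apply the $\theta_m$-vanishing condition $[(-m + \ee_1 w_1 + \ee_2 w_2 \pm \theta_m)/2] = 0$ from Proposition \ref{prop:moduleV}. The $n$-dependent scalar $k(w_1 - (n-1))$ must then combine, via the $\theta_m$-vanishing, with the $q^{\pm(n-1)}$ prefactors to produce an expression depending only on $a := -m + \ee_1 w_1 + \ee_2 w_2$; the identity $[2a]/[a] = q^a + q^{-a}$ then assembles the answer into $\alpha = [n][2a]/[a]$. This final collapse, in which the seemingly $n$-dependent boundary scalar $k(w_1 - (n-1))$ trades with the $n$-dependent summation prefactors and the $\theta_m$-condition to produce an $m$-dependent ratio, is the main obstacle of the proof and the place where the delicate choice of $\theta_m$ earns its keep.
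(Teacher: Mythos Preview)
The paper does not prove this statement; it is quoted verbatim as \cite[Theorem 6.4]{degiernichols} and then used as input to Theorem~\ref{thm:isomorphism} and its corollaries. So there is nothing to compare against in the paper itself.

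Your reduction to a single scalar is fine: centrality of $Z_n$ plus the $b_n$-decomposition into pairwise non-isomorphic irreducibles forces a scalar on each height layer, and the $e_n$-linkage of adjacent layers makes all those scalars equal.

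The gap is in the evaluation step. Your claim that $e_n v_{p^*}$ is a scalar multiple of $v_{p^*}$ is incorrect for $m<n-1$. The maximal path $p^*=(0,1,\dots,n)$ is the \emph{upper} member of a pair $(p,p')$ in Theorem~\ref{thm:pathaction} (with $p=(0,1,\dots,n-1,n-2)$), so the relevant formula is
\[
e_n v_{p^*}=k(-w_1+(n-1))\,v_{p^*}+k(-w_1+(n-1))\,k(w_1-(n-1))\,v_p,
\]
which has the opposite sign in the diagonal coefficient to what you wrote and a nonzero off-diagonal term whenever $n-1\neq m$. Consequently $g_n$ and the Murphy elements $J_i$ do \emph{not} act as scalars on $v_{p^*}$, contrary to your assertion.

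There are two standard repairs. The cleanest is to evaluate instead on the boundary vector $v_{p_{m+1}}$ used in the proof of Theorem~\ref{thm:isomorphism}: there $h_{n-1}=m$, and the defining condition on $\theta_m$ forces one of $k(\pm(w_1-m))$ to vanish, so $e_n$ genuinely acts as an explicit scalar and your Murphy-element recursion goes through as written. Alternatively, you can keep $p^*$ but argue by projection: since $e_i v_{p^*}=0$ for $0\le i\le n-1$, no off-diagonal term produced by $g_n$ can ever return to the $v_{p^*}$-line under subsequent applications of $g_i^{\pm1}$; hence the $v_{p^*}$-coefficient of each $J_i v_{p^*}$ is determined solely by the diagonal part of $g_n$, and your geometric-series identity holds at the level of that coefficient. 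Either way, the final algebraic collapse you flag as ``the main obstacle'' still has to be carried out, and with the corrected diagonal coefficient $k(-w_1+(n-1))$ in place of $k(w_1-(n-1))$.
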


\begin{thm}\label{thm:isomorphism}
  Let $\theta=-m+\ee_1w_1+\ee_2w_2$.
  Then the generic $\bnx$-module 
$\W{n,m}{\ee_1,\ee_2}$ 
is isomorphic to the submodule $V^{(n,m)}_{\ee_1,\ee_2}$ of $\Wb$. 
\end{thm}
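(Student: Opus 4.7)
The approach is to show $V^{(n,m)}_{\ee_1,\ee_2}$ is a cyclic $\bnx$-module and then pin down its isomorphism class by central-character data, working generically where $\bnx$ is split semisimple and cell modules are absolutely simple. Let $p_\star$ denote the ``extreme'' path in the basis of $V^{(n,m)}_{\ee_1,\ee_2}$ --- the unique path whose final height is $\ee_1(m+1)$ and which climbs (respectively descends) monotonically as far as possible. I claim $V^{(n,m)}_{\ee_1,\ee_2}=\bnx\cdot v_{p_\star}$. Indeed, by Theorem~\ref{thm:pathaction} the action of $e_i$ on the ``upper'' vector $v_{p'}$ in a tile-pair has the form $(\text{scalar})\,v_{p'}+(\text{scalar})\,v_p$, with the $v_p$-coefficient being a product of $r$- or $k$-values that is non-zero in the generic regime. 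Hence from $v_{p'}$ already in the cyclic submodule one can extract $v_p$; iterating tile-removals backwards from $p_\star$ reaches every path with $|h_n|\geq m+1$, so $V^{(n,m)}_{\ee_1,\ee_2}$ is cyclic.

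To identify the cyclic simple module $V^{(n,m)}_{\ee_1,\ee_2}$ with a specific cell module, I apply three invariants to $v_{p_\star}$: (i) Theorem~\ref{thm:centralelement} gives the $Z_n$-eigenvalue $\alpha^{(n,m)}_{\ee_1,\ee_2}$; (ii) Theorem~\ref{thm:pathaction} at $i=0$, applied to the first step $h_1=\ee_1$ of $p_\star$, says $e_0$ acts on $v_{p_\star}$ by $0$ if $\ee_1=+$ and by $[w_1]/[w_1+1]$ if $\ee_1=-$; (iii) the analogous computation at $i=n$ detects $\ee_2$. The boundary eigenvalues fix the parities $(\ee_1,\ee_2)$, and for fixed parities the $Z_n$-scalar
\[
\alpha^{(n,m)}_{\ee_1,\ee_2}=[n]\bigl(q^{-m+\ee_1w_1+\ee_2w_2}+q^{m-\ee_1w_1-\ee_2w_2}\bigr)
\]
is an injective function of $m$ at generic $(w_1,w_2)$, since distinct values of $-m+\ee_1w_1+\ee_2w_2$ (within a fixed parity class in $\Lambda_n^+$) yield distinct sums $q^x+q^{-x}$. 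Matching this data with the analogous top-weight eigenvalue profile of a standard cell-module generator, read off from the half-diagram description in \S\ref{sect:cell}, forces the isomorphism class to be $W^{(n,m)}_{\ee_1,\ee_2}$.

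The main obstacle is the cyclicity step: one must verify that the off-diagonal coefficients needed to extract each $v_p$ from $e_i v_{p'}$ are non-zero in the chosen generic locus (that is, the denominators in $r$ and $k$ are units and the numerators do not vanish along the chosen sequence), and check combinatorially that every path with $|h_n|\geq m+1$ is reachable from $p_\star$ by a sequence of tile-removal moves lying inside $V^{(n,m)}_{\ee_1,\ee_2}$. Once cyclicity is secured, the identification via central and boundary characters is essentially formal.
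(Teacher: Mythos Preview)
Your approach differs from the paper's and has a real gap. The paper constructs an explicit homomorphism $\phi:W^{(n,m)}_{\ee_1,\ee_2}\to V^{(n,m)}_{\ee_1,\ee_2}$ by sending a half-diagram generator (e.g.\ $d_{m+1}$ in the $(+,+)$ case) to the path vector $v_{p_{m+1}}$, where $p_{m+1}$ is the path of final height $m+1$ \emph{closest} to the fundamental path $p_0$ (not the monotone extreme path you describe). It then checks that each generator $e_i$ annihilating $d_{m+1}$ also annihilates $v_{p_{m+1}}$, proves surjectivity of $\phi$ by induction on tile-distance from $p_{m+1}$, and concludes by equality of dimensions.

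The main problem with your route is circularity. You invoke the $Z_n$-eigenvalue to pin down the isomorphism class, but at this point in the logical order the $Z_n$-scalar on the cell module $W^{(n,m)}_{\ee_1,\ee_2}$ is \emph{not} yet known: Theorem~\ref{thm:centralelement} computes $Z_n$ on $V^{(n,m)}_{\ee_1,\ee_2}$, and the action on $W^{(n,m)}_{\ee_1,\ee_2}$ is only deduced \emph{after} the isomorphism is established (the paper's subsequent corollary). Your phrase ``read off from the half-diagram description'' hides exactly the hard computation that the path basis was introduced to circumvent, so you cannot match central characters without either doing that computation or assuming the result. The boundary data (ii), (iii) do not help: the $e_0$- and $e_n$-eigenvalues on a chosen vector are not module invariants (and in fact your monotone $p_\star$ with $h_1=\ee_1$ has $e_0$-eigenvalue $0$ for $\ee_1=+$, whereas the half-diagram generator $d_{m+1}$ of $W^{(n,m)}_{+,+}$ has $e_0$-eigenvalue $\delta_L$, so the profiles do not even agree). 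Nor can you sidestep this by semisimplicity of $\bnx$: at the critical $\theta$ needed to produce $V^{(n,m)}_{\ee_1,\ee_2}$ as a proper submodule, $\Wb$ is reducible and the full algebra is not semisimple, so one would still need to show directly that $I_n(0)$ kills $V^{(n,m)}_{\ee_1,\ee_2}$ before invoking semisimplicity of $b_n'$ --- and that is again essentially the content of the theorem.
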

\begin{proof}
We first note that the dimensions of the modules are equal
\cite[Theorem 6.4]{degiernichols}.
We also note that 
both modules are generically irreducible $b_n^x$-modules.
Our strategy will be to compare their left blob 
    content when restricted to the left blob
    algebra and then to further distinguish using the action of the
    right blob generator, $e_n=f$.

We know that the modules $\W{n,m}{\ee_1,\ee_2}$ and
$V^{(n,m)}_{\ee_1,\ee_2}$ are both generically irreducible for $\bnx$.
We also know that upon restriction to the left blob subalgebra of
$\bnx$ that they have the same irreducible content as left blob
modules.
Thus we can say that $V^{(n,m)}_{\ee_1,\ee_2}$
is either $\W{n,m}{\ee_1,\ee_2}$ or 
$\W{n,m}{\ee_1,-\ee_2}$. I.e. the left blob structure doesn't
distinguish between $\pm \ee_2$.  We now consider the action of the
right blob generator $e_n = f$.

We work out the trace of the action of $f$ on these two modules which
is an easy calculation.
As $f$ has a monic action, (i.e. maps a basis element to another
element),  we need only write down those elements which map to the same
basis element times a scalar. (We only need the diagonal entries in
the matrix representing the action of $f$.)
If $\ee_2=1$ then the trace is 
$
\delta_R$  times {number of basis elements
  with arcs with
  right blob ending on node n}.
If 
$\ee_2 = -1$ then the trace of $f$ is
$\delta_R$ times {number of  basis elements with arcs with
  right blob ending on node n} plus 
 $\delta_R$ times {number of basis elements with propagating lines 
starting at node n and decorated with a right blob}.

Now note 
there is a set bjection on the basis of the module
$\W{n,m}{\ee_1,+}$ to the module
$\W{n,m}{\ee_1,-}$ given by putting a right blob on the right most
propagating line. So these modules have the same dimension. 
Moreover,  while the first term in the two cases of the trace is clearly equal for either value of
$\ee_2$, 
the second term
in the second sum is non-zero and 
thus these sums are clearly different from each other.
(In principle, this is a number that could be made
  explicit by combinatorics. This won't be needed though.)
This means that the action of $f$ is enough to determine the sign
$\ee_2$.

Now consider the module $V^{(n,m)}_{\ee_1,\ee_2}$. 
Here we need to consider the paths in pairs and use the action defined
before.

  Let $p'$ be the path obtained by adding a half tile to $p$ at the
  right boundary. The path $p'$ is ``further away'' from the fundamental path
  than $p$. In particular, the absolute value of the last entry in
  $p'$ is bigger than $p$.
  Then $e_n$ acts on the pair $\{v_p,v_{p'}\}$ in the
  following way:
    \begin{enumerate}
    \item If $h_{n-1}\geq0$ then
      \begin{align*}
        e_nv_p&=v_{p'}+k(w_1-h_{n-1})v_p\\
        e_nv_{p'}&=k(-w_1+h_{n-1})v_{p'}+k(-w_1+h_{n-1})k(w_1-h_{n-1})v_p.
      \end{align*}
    \item If $h_{n-1}<0$ then
      \begin{align*}
        e_nv_p&=v_{p'}+k(-w_1+h_{n-1})v_p\\
        e_nv_{p'}&=k(w_1-h_{n-1})v_{p'}+k(-w_1+h_{n-1})k(w_1-h_{n-1})v_p.
      \end{align*}
    \end{enumerate}
So the trace of this action on the 2 by 2 matrix given by
$\{v_p,v_{p'}\}$ is in both cases
$$
        k(w_1-h_{n-1}) + k(-w_1+h_{n-1})
        $$

Using the formula for $k$ we get:
\begin{multline*}
k(w_1-h_{n-1})
+
k(-w_1+h_{n-1})\\
=-\frac{[(w_1-h_{n-1}-w_2+\theta)/2][(w_1-h_{n-1}-w_2-\theta)/2]}{[w_1-h_{n-1}][w_2+1]}\\
- \frac{[(-w_1+h_{n-1}-w_2+\theta)/2][(-w_1+h_{n-1}-w_2-\theta)/2]}{[-w_1+h_{n-1}][w_2+1]}
\end{multline*}
with $\theta$ such that
$[-m+\ee_1 w_1 +\ee_2 w_2 \pm \theta] =0$.

set $h:= h_{n-1}$ 
 and  let $\theta=-m+\ee_1w_1+\ee_2w_2$.
Then $k(w_1-h) + k(-w_1+h)$
\begin{multline*}
=-\frac{[(1+\ee_1)w_1-h-(1-\ee_2)w_2-m)/2][((1-\ee_1)w_1-h-(1+\ee_2)w_2+m)/2]}{[w_1-h][w_2+1]}\\
+\frac{[(-(1-\ee_1)w_1+h-(1-\ee_2)w_2-m)/2][(-(1+\ee_1)w_1+h-(1+\ee_2)w_2+m)/2]}{[w_1-h][w_2+1]}
\end{multline*}
Set $u= \frac{h+m}{2}$ and $v= \frac{h-m}{2}$ so $h = u+v$ and $m=u-v$.
Also set
${\alpha_1} = \frac{1+\ee_1}2$,
${\alpha_2} = \frac{1+\ee_2}2$,
$\bar{\alpha}_1 = \frac{1-\ee_1}2$, and
$\bar{\alpha}_2 = \frac{1-\ee_2}2$.
Note that these numbers are all $0$ or $1$,
$\alpha_i+\bar{\alpha}_i = 1$, and
$\alpha_i-\bar{\alpha}_i = \ee_i$.
$$
=-\frac{[\alpha_1 w_1-u-\bar{\alpha}_2 w_2][\bar{\alpha}_1w_1-v-\alpha_2 w_2]
+[\bar{\alpha}_1 w_1-v+\bar{\alpha}_2 w_2][\alpha_1 w_1-u+\alpha_2 w_2]}{[w_1-h][w_2+1]}
$$

We now expand the quantum integer products.
\begin{multline*}
  [\alpha_1 w_1-u-\bar{\alpha}_2 w_2][\bar{\alpha}_1w_1-v-\alpha_2
  w_2]\\
=[ w_1- w_2-h-1 ]
+ [w_1-w_2-h-3] + \cdots + [\ee_1 w_1  +\ee_2 w_2  -m +3  ] +
[\ee_1w_1 +\ee_2 w_2  -m +1]
\end{multline*}
\begin{multline*}
[\bar{\alpha}_1 w_1-v+\bar{\alpha}_2 w_2][\alpha_1 w_1-u+\alpha_2w_2]\\
=[ w_1+ w_2-h -1 ]
+ [w_1+w_2-h-3] + \cdots + [\ee_1 w_1  +\ee_2 w_2  -m +3  ] +
[\ee_1w_1 +\ee_2 w_2  -m+1 ]
\end{multline*}

The numerator is 
\begin{multline*}
[ w_1+ w_2-h -1 ]
+ [w_1+w_2-h-3] + \cdots + [ w_1- w_2-h +3] + [ w_1- w_2-h +1] \\
= [w_1-h][w_2]
\end{multline*}

Thus we get:
$$
k(w_1-h) + k(-w_1+h)
= \frac{[w_1-h][w_2]}{[w_1-h][w_2+1]}
= \frac{[w_2]}{[w_2+1]}
= \dr
$$
if both elements of the pair
  $\{v_p, v_{p'}\}$ are in $V^{n,m}_{\ee_1,\ee_2}$.

  But it may be that the $v_p$'s don't always occur in pairs like this
  inside $V^{n,m}_{\ee_1,\ee_2}$.

Now  we always get a pair $\{v_{p}, v_{p'}\}$ in
$V^{n,m}_{\ee_1,\ee_2}$
if the last entry of $p$ is at least $|m|$. (Since the last entry of
$p'$ has absolute value $|m| +2$.)

Let's consider the case where $v_{p'}$ is in $V^{n,m}_{\ee_1,\ee_2}$
but $v_p$ is not.

In this case: the second last entry of $p$ and $p'$ is $-m+1$ ($\ee_1 = -1$)
or $m-1$ ($\ee_1=1$).

So we check,
noting that: 

 {
    $k(-w_1+m) = 0$  
    if $\pm \theta = w_1+w_2 -m$ ($\ee_1=\ee_2=1$).
   }

 {
    $k(-w_1+m) = \dr$ and
    $k(w_1-m) = 0$ 
    if $\pm \theta = w_1-w_2 -m$ ($\ee_1=1$ and $\ee_2=-1$).
    }

 {
    $k(w_1+m) = 0$  
    if $\pm \theta =-w_1+w_2 -m$ ($\ee_1=-1$ and $\ee_2=1$).
    }

 {
    $k(w_1+m) = \dr$ and
    $k(-w_1-m) = 0$ 
    if $\pm \theta = -w_1-w_2 -m$ ($\ee_1=-1$ and $\ee_2=-1$).
    }

\newcommand{\VVV}{V^{n,m}_{\ee_1,\ee_2}}
    
 Thus, putting $p^+$ for the $p'$ with last entry $+|m|$ and 
 putting $p^-$ for the $p'$ with last entry $-|m|$.
 $$
f v_{p^+} = k(-w_1 + m) v_{p^+} + k(-w_1+m)k(w_1-m) v_{p}
 $$
 $$
f v_{p^-} = k( w_1 + m) v_{p^-} + k(-w_1-m)k(w_1+m) v_{p}
 $$
    
For $v_{p^+}$ we see that the coefficient of $v_p$ is zero for either
value of $\ee_2$ as it
needs to be for $f v_{p^+}\in \VVV$. 
The trace is $k(-w_1+m)$.

For $v_{p^-}$ we see that the coefficient of $v_p$ is zero
for either value of $\ee_2$ as it
needs to be for $f v_{p^-}\in \VVV$. 
The trace is $k(w_1+m)$.

Thus in both cases, the trace is one-dimensional with value $0$ if $\ee_2=1$ and $\dr$
if $\ee_2=-1$.

Thus we see at that the traces of $f$ are different on these modules
and allow us to distinguish as claimed. 
\end{proof}

\begin{cor}
Fix $\ddd$ and hence $\bnx$.
  Whenever $\alpha^{(n,m)}_{\ee_1,\ee_2}\in k$,
  the central element $Z_n$ acts by
  $\alpha^{(n,m)}_{\ee_1,\ee_2}$ on $\W{n,m}{\ee_1,\ee_2}$.
\end{cor}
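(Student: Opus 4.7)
The strategy is to combine Theorems~\ref{thm:centralelement} and~\ref{thm:isomorphism} at generic parameter values and then extend to arbitrary $\ddd$ by a Zariski density argument.

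On the generic locus, take $\theta=-m+\ee_1w_1+\ee_2w_2$ with the corresponding $b$ from~(\ref{eq:b}). Theorem~\ref{thm:isomorphism} provides an isomorphism $W^{(n,m)}_{\ee_1,\ee_2}\cong V^{(n,m)}_{\ee_1,\ee_2}\subseteq W^{(n)}(b)$, and Theorem~\ref{thm:centralelement} then shows that $Z_n$ acts on this submodule by the scalar $\alpha^{(n,m)}_{\ee_1,\ee_2}$. Transporting through the isomorphism, $Z_n$ acts on $W^{(n,m)}_{\ee_1,\ee_2}$ by $\alpha^{(n,m)}_{\ee_1,\ee_2}$ for all generic $\ddd$. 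The key simplification is that, via the identity $[2u]/[u]=q^u+q^{-u}$, the scalar becomes $\alpha^{(n,m)}_{\ee_1,\ee_2}=[n](q^u+q^{-u})$ with $u=-m+\ee_1w_1+\ee_2w_2$, depending only on $q,Q_1,Q_2$ and not on the auxiliary quantities $\theta$ or $b$ used in the intermediate construction.

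To pass to arbitrary $\ddd$, I would work in the diagram basis of $W^{(n,m)}_{\ee_1,\ee_2}$, which is independent of parameters. Via the surjection $\pi$ of Proposition~\ref{prop:surjection}, $Z_n$ is a Laurent polynomial in $q^{\pm1},Q_1^{\pm1},Q_2^{\pm1}$ with coefficients in the $e_i$, so its action matrix $M(\ddd)$ in the diagram basis has Laurent-polynomial entries in $q,Q_1,Q_2,\kk$ (after substituting~(\ref{eq:pigs}) for $\dd,\dl,\dr$ and normalising $\kl=\kr=1$ using Table~\ref{tab:repara}). The scalar $\alpha^{(n,m)}_{\ee_1,\ee_2}$ is similarly a Laurent polynomial in $q,Q_1,Q_2$. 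By the previous paragraph, $M(\ddd)-\alpha^{(n,m)}_{\ee_1,\ee_2}\,\id$ vanishes on a Zariski dense open subset, and therefore as a Laurent-polynomial identity. Specialising to any $\ddd\in k^6$ for which $\alpha^{(n,m)}_{\ee_1,\ee_2}\in k$ then yields the claim.

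The main obstacle I anticipate is the bookkeeping around the several parameter conventions used throughout the paper: one must confirm that the denominators in the path-basis recursion~(\ref{eq:r(u)k(u)}) — which are what define the generic locus — do not obstruct the final Laurent-polynomial form of $M(\ddd)-\alpha^{(n,m)}_{\ee_1,\ee_2}\,\id$, so that the density argument specialises cleanly to every $\ddd$ for which the hypothesis makes sense. Once this is in hand, the remainder is a standard polynomial-identity argument. It is also worth remarking that this forces the $\kk$-dependence of $M(\ddd)$ to cancel out in the end, consistent with the observation that $\alpha^{(n,m)}_{\ee_1,\ee_2}$ itself is independent of $\kk$.
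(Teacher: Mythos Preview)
Your proposal is correct and follows essentially the same strategy as the paper: establish the generic case via Theorems~\ref{thm:centralelement} and~\ref{thm:isomorphism}, then extend by density. The paper phrases the extension step as ``analytic continuity'' (noting that $Z_n$, being central, acts by some scalar, which is then the limit of the generic scalars), whereas you spell it out as a Zariski-density/Laurent-polynomial identity in the diagram basis; these amount to the same thing, and your version is if anything the more careful formulation. Your worry about denominators from~(\ref{eq:r(u)k(u)}) is unfounded: the diagram basis $\WWB$ and the action of the $e_i$ on it are defined over $\Z[\ddd]$ with no denominators, so $M(\ddd)$ is already Laurent-polynomial in $q,Q_1,Q_2$ (and in fact independent of $\kk$ for $m\geq 1$, since $\kk$ only enters through features with no propagating lines).
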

\begin{proof}
  Use \cite[Theorem 6.4]{degiernichols} and the isomorphism from
  Theorem~\ref{thm:isomorphism}
  for the generic case. The other cases follow by analytic
  continuity
  --- $Z_n$ acts by some scalar, so the limit of generic actions
  approaching $\ddd$ exists and is this scalar. 
\end{proof}

An important and immediate consequence is the following corollary.

\begin{cor} \label{th:cellblockH}
Fix $\ddd$ and hence $\bnx$.
  A necessary condition for two cell modules $\W{n,m}{\ee_1,\ee_2}$
  and $\W{n,t}{\eta_1,\eta_2}$ to be in the same block is that
  $\alpha^{(n,m)}_{\ee_1,\ee_2}=\alpha^{(n,t)}_{\eta_1,\eta_2}$. 
\end{cor}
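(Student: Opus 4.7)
The plan is to invoke the standard block-theoretic principle that any central element of a finite-dimensional algebra over an algebraically closed field takes block-constant scalar values on simple modules, and to combine this with the previous corollary.

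First, by the preceding corollary, $Z_n$ acts on the cell module $W^{(n,m)}_{\ee_1,\ee_2}$ by the scalar $\alpha^{(n,m)}_{\ee_1,\ee_2}$, and therefore acts by this very scalar on every simple composition factor of the cell module. The task thus reduces to showing that if two simple $\bx_n$-modules lie in the same block, then $Z_n$ acts on them by a common scalar.

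For this I would use the standard argument: two simples belong to the same block if and only if they are linked by a chain of simples in which consecutive members occur as composition factors of a common indecomposable projective $P$. Since $k=\C$ is algebraically closed, $\End(P)$ is local, so the image of $Z_n$ in its centre lies in a local commutative ring and thus acts on $P$ as a scalar plus a nilpotent. In particular $Z_n$ acts by one and the same scalar on every composition factor of $P$, and iterating along the linking chain yields block-constancy of the central character.

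The conclusion then follows: ``two cell modules belonging to the same block'' means their simple composition factors lie in a single common block, and on those factors $Z_n$ acts by the block-constant scalar. Since $Z_n$ acts on each cell module itself by $\alpha^{(n,m)}_{\ee_1,\ee_2}$ (respectively $\alpha^{(n,t)}_{\eta_1,\eta_2}$), these scalars must both agree with the block-constant value, forcing $\alpha^{(n,m)}_{\ee_1,\ee_2} = \alpha^{(n,t)}_{\eta_1,\eta_2}$. There is essentially no obstacle in the argument; the only ingredient from outside the paper is the block-constancy lemma, a basic fact of the representation theory of finite-dimensional algebras which can be cited from a textbook such as \cite{benson}.
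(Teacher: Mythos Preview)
Your proof is correct and is precisely the standard argument the paper has in mind: the paper states the corollary as ``an important and immediate consequence'' of the previous one and gives no further proof, relying tacitly on exactly the block-constancy of central characters that you have spelled out. You have simply made explicit what the paper leaves implicit.
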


\section{Gram determinants for cell modules}\label{sec:gram}

We recall from \cite[\S8.2]{gensymp} that each cell module
$\WW$ has a basis $\WWB$ of half diagrams with arcs on the northern edge 
(hereafter referred to as the standard diagram basis). 

\begin{eg}\label{eg:gram1}
The cell module $\W{5,2}{-,-}$  
has the following basis of half diagrams:
 \[ \begin{tikzpicture}[scale=0.4]
    \draw (0,3) to[out=-90, in=-90] node[pos=0.5]{$\bullet$} +(1,0);
    \draw (2,3) to node[pos=0.5]{$\bullet$} +(0,-2);
    \draw (3,3) to +(0,-2);
    \draw (4,3) to node[pos=0.5,white]{$\bullet$} node[pos=0.5]{$\circ$} +(0,-2);
  \end{tikzpicture},\hspace{1em}
 \begin{tikzpicture}[scale=0.4]
    \draw (0,3) to[out=-90, in=-90] +(1,0);
    \draw (2,3) to node[pos=0.5]{$\bullet$} +(0,-2);
    \draw (3,3) to +(0,-2);
    \draw (4,3) to node[pos=0.5,white]{$\bullet$} node[pos=0.5]{$\circ$} +(0,-2);
  \end{tikzpicture},\hspace{1em}
 \begin{tikzpicture}[scale=0.4]
    \draw (1,3) to[out=-90, in=-90] +(1,0);
    \draw (0,3) to[out=-90, in=90] node[pos=0.5]{$\bullet$} +(2,-2);
    \draw (3,3) to +(0,-2);
    \draw (4,3) to node[pos=0.5,white]{$\bullet$} node[pos=0.5]{$\circ$} +(0,-2);
  \end{tikzpicture},\hspace{1em}
 \begin{tikzpicture}[scale=0.4]
    \draw (2,3) to[out=-90, in=-90] +(1,0);
    \draw (0,3) to[out=-90, in=90] node[pos=0.5]{$\bullet$} +(2,-2);
    \draw (1,3) to[out=-90, in=90] +(2,-2);
    \draw (4,3) to node[pos=0.5,white]{$\bullet$} node[pos=0.5]{$\circ$} +(0,-2);
  \end{tikzpicture},\hspace{1em}
  \begin{tikzpicture}[scale=0.4]
    \draw (3,3) to[out=-90, in=-90] +(1,0);
    \draw (0,3) to[out=-90, in=90] node[pos=0.5]{$\bullet$} +(2,-2);
    \draw (1,3) to[out=-90, in=90] +(2,-2);
    \draw (2,3) to[out=-90, in=90] node[pos=0.5,white]{$\bullet$} node[pos=0.5]{$\circ$} +(2,-2);
  \end{tikzpicture},\hspace{1em}
\begin{tikzpicture}[scale=0.4]
    \draw (3,3) to[out=-90, in=-90] node[white,pos=0.5]{$\bullet$}node[pos=0.5]{$\circ$} +(1,0);
    \draw (0,3) to[out=-90, in=90] node[pos=0.5]{$\bullet$} +(2,-2);
    \draw (1,3) to[out=-90, in=90] +(2,-2);
    \draw (2,3) to[out=-90, in=90] node[pos=0.5,white]{$\bullet$} node[pos=0.5]{$\circ$} +(2,-2);
  \end{tikzpicture}.
\]
\end{eg}

Consider  $u$, $v\in \WWB$. 
  We define a scalar $\langle u,v \rangle$  
as follows.
We first flip $v$ vertically and identify the southern nodes
of this diagram with the respective northern nodes of $u$. 
After applying the  straightening rules for $\bnx$, 
we obtain a diagram with a number of (possibly
  decorated) strings. The value of $\langle u,v \rangle$  is the
  coefficient of this diagram if the strings match the number and
  decorations needed for the cell module, and is zero otherwise. For
  instance 
  \[
  \begin{tikzpicture}[scale=0.4]
    \draw (0,3) to[out=-90, in=-90] node[pos=0.5]{$\bullet$} +(1,0);
    \draw (2,3) to node[pos=0.5]{$\bullet$} +(0,-2);
    \draw (3,3) to +(0,-2);
    \draw (4,3) to node[pos=0.5,white]{$\bullet$} node[pos=0.5]{$\circ$} +(0,-2);
    \draw (0,3) to[out=90, in=-90] node[pos=0.5]{$\bullet$} +(2,2);
    \draw (1,3) to[out=90, in=90] +(1,0);
    \draw (3,3) to +(0,2);
    \draw (4,3) to node[pos=0.5,white]{$\bullet$}node[pos=0.5]{$\circ$} +(0,2);
    \draw (6,3) node {$=\dl^2\dr$};
    \draw (8,2) to node[pos=0.5]{$\bullet$} +(0,2);
    \draw (9,2) to +(0,2);
    \draw (10,2) to node[white,pos=0.5]{$\bullet$}node[pos=0.5]{$\circ$} +(0,2);
    \draw (13,3) node {and};
    \draw (16,3) to[out=-90, in=-90] node[pos=0.5]{$\bullet$} +(1,0);
    \draw (18,3) to[out=-90, in=90] node[pos=0.5]{$\bullet$} +(0,-2);
    \draw (19,3) to +(0,-2);
    \draw (20,3) to node[pos=0.5,white]{$\bullet$} node[pos=0.5]{$\circ$} +(0,-2);
    \draw (16,3) to[out=90, in=-90] node[pos=0.5]{$\bullet$} +(2,2);
    \draw (17,3) to[out=90, in=-90] +(2,2);
    \draw (19,3) to[out=90, in=90] +(1,0);
    \draw (18,3) to[out=90, in=-90] node[pos=0.5,white]{$\bullet$}node[pos=0.5]{$\circ$} +(2,2);
    \draw (21.5,3) node {$=\dl$};
    \draw (23,4) to[out=-90, in=-90] node[pos=0.5]{$\bullet$} +(1,0);
    \draw (25,4) to[out=-90, in=90] node[pos=0.7]{$\bullet$}node[pos=0.3,white]{$\bullet$}node[pos=0.3]{$\circ$} +(-2,-2);
    \draw (24,2) to[out=90, in=90] node[pos=0.5,white]{$\bullet$}node[pos=0.5]{$\circ$} +(1,0);
  \end{tikzpicture},\]
  giving  
$\langle u,v \rangle = \dl^2\dr$ and $=0$ respectively. 
(This is  $\langle u,v \rangle$  as
defined in \cite[\S6]{saleurcombinatorial} with the
``semimeander'' convention.) 

\begin{prop}
  Let $\sigma:b^x_n\rightarrow b^x_n$ be the involution defined by
  flipping diagrams vertically. 
Then the inner product defined by 
  $\langle-,-\rangle$, together with $\sigma$, defines a contravariant
  bilinear form on $\WW$. 
That is,  
for $d\in b^x_n$, we have 
  \[\langle du,v\rangle=\langle u,\sigma(d)v\rangle.\]
\end{prop}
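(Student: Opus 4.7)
The proof is a direct diagrammatic calculation that ultimately reduces to the associativity of diagram stacking together with the order-independence of the straightening procedure. By bilinearity of $\langle-,-\rangle$ and of the module action, it suffices to treat the case where $d\in\Bxx_n$ and $u,v$ are standard diagram basis elements of $\WW$.

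I would realise both sides as the coefficient of one and the same three-layer picture. Let $T$ denote the diagram obtained by placing $\sigma(v)$ on top of $d$ on top of $u$ (where, as in the definition of $\langle-,-\rangle$, half-diagrams are flipped vertically before being stacked as tops of diagrams). Applying the straightening rules of Table~\ref{blobtab} and the topological relation~(\ref{topquot}) to $T$ until exhausted yields a scalar multiple of a single ``skeleton'' configuration with no reducible features; call this scalar $\lambda(T)$. Declare its contribution to be $\lambda(T)$ if the skeleton matches the propagating/decoration pattern specifying $\WW$, and zero otherwise. The claim is that this value equals both $\langle du,v\rangle$ and $\langle u,\sigma(d)v\rangle$.

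For $\langle du,v\rangle$ one first straightens the bottom pair $d|u$ to express $du$ as a $k$-linear combination of cell-module basis elements, then pairs the result with $v$ from above. For $\langle u,\sigma(d)v\rangle$ one first straightens the top pair $\sigma(d)|v$; flipping the result vertically reverses layer order and takes $\sigma(d)$ back to $d$, so stacking the outcome on $u$ reproduces the same three-layer picture $T$. Because the value of the map $f$ defined via (\ref{de:fdd}) is independent of the order in which the local relations are applied (Section~\ref{sect:notn}, proved in \cite{mgp3}), the two procedures yield the same scalar.

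The one subtlety, and the main obstacle, is the handling of ``higher weight'' summands: when $du$ or $\sigma(d)v$ is written in the cell module $\WW$, any diagram whose propagating count drops below $m+\tfrac12(\ee_1+\ee_2)$, or whose left/right parities no longer match $(\ee_1,\ee_2)$, is discarded. I would dispense with these by noting that completing any such discarded summand to a three-layer picture produces a skeleton whose propagating/decoration pattern also fails to match $\WW$, so it contributes zero to $\lambda(T)$ as well. This is the same bookkeeping that made the cell module construction in Section~\ref{sect:cell} well-defined and is routine, counting propagating lines through each layer and tracking the parities $\uro,\uri$. The conclusion $\langle du,v\rangle=\lambda(T)=\langle u,\sigma(d)v\rangle$ then follows.
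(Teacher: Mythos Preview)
Your proposal is correct and matches the paper's approach. The paper's own proof is a single sentence --- ``This follows from the definition of $\langle u,v\rangle$ and the action of the algebra on half diagrams'' --- and you have simply unpacked the diagrammatic associativity argument that this sentence points to, including the bookkeeping for the higher-weight terms discarded in the cell module action.
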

\begin{proof}
  This follows
from the definition of $\langle u,v \rangle$  
and the action of the algebra on half diagrams. 
\end{proof}

While this form can be used over the integral ring
$\Z[q^{\pm},Q_1^{\pm},Q_2^{\pm}]$,
we will need to specialise in order
to use results from the parameterisation of De~Gier--Nichols.

We  define the Gram matrix $\Gram{n,m}{\ee_1,\ee_2}$ to be the
matrix of entries $(\langle u,v\rangle )_{u,v}$, where $u,v$ runs over
the basis $\WWB$ of $\W{n,m}{\ee_1,\ee_2}$. 
We also define the Gram determinant
\begin{equation} \label{eq:defdetgamma}
\WWGam \; = \; \det{\Gram{n,m}{\ee_1,\ee_2}} .
\end{equation} 
When we base change to a field, the rank of  $\Gram{n,m}{\ee_1,\ee_2}$
is also the rank of a corresponding map from the module to its
contravariant dual. 
The module is thus simple if and only if the matrix
is non-singular.

\begin{eg}
With the  ordering of basis elements as in example \ref{eg:gram1}, 
the Gram matrix is therefore
  \[\Gram{5,2}{-,-}=\left(\begin{array}{cccccc}
    \dl^2\dr\kl&\dl\dr\kl&\dl^2\dr&0&0&0\\
    \dl\dr\kl&\dl\dr\dd&\dl\dr&0&0&0\\
    \dl^2\dr&\dl\dr&\dl\dr\dd&\dl\dr&0&0\\
    0&0&\dl\dr&\dl\dr\dd&\dl\dr&\dl\dr^2\\
    0&0&0&\dl\dr&\dl\dr\dd&\dl\dr\kr\\
    0&0&0&\dl\dr^2&\dl\dr\kr&\dl\dr^2\kr
    \end{array}\right).\]
  We wish to calculate the determinant of this matrix. 
By Laplace expansion, we obtain 
  \[(\dl\dr)^{-6}\Gramdet{5,2}{-,-}=\dl\kl\left|\begin{smallmatrix}
      \dd&1&0&0&0\\
      1&\dd&1&0&0\\
      0&1&\dd&1&\dr\\
      0&0&1&\dd&\kr\\
      0&0&\dr&\kr&\dr\kr
      \end{smallmatrix}\right|-\kl^2\left|\begin{smallmatrix}
        \dd&1&0&0\\
        1&\dd&1&\dr\\
        0&1&\dd&\kr\\
        0&\dr&\kr&\dr\kr
        \end{smallmatrix}\right|
        +(2\dl\kl-\dl^2\dd)\left|\begin{smallmatrix}
            \dd&1&\dr\\
            1&\dd&\kr\\
            \dr&\kr&\dr\kr
            \end{smallmatrix}\right|.
\]
          Laplace expanding the first of these determinants results in the following:
          \begin{equation}\label{eq:gramdetexpand}\dr\kr\left|\begin{smallmatrix}
            \dd&1&0&0\\
            1&\dd&1&0\\
            0&1&\dd&1\\
            0&0&1&\dd
          \end{smallmatrix}\right|-\kr^2\left|\begin{smallmatrix}
            \dd&1&0\\
            1&\dd&1\\
            0&1&\dd
            \end{smallmatrix}\right|+(2\dr\kr-\dd\dr^2)\left|\begin{smallmatrix}
              \dd&1\\
              1&\dd
              \end{smallmatrix}\right|.\end{equation}
            Since $\delta=[2]$, we can use the identities for quantum
            integers to show that each of these determinants is equal
            to $[n+1]$, where $n$ is the size of the matrix. Using our
            parameterisation for $\dl,\dr,\kl,\kr$, we then see that
            \eqref{eq:gramdetexpand} is equal to
            \begin{align*}
              [w_2+1]^{-2}&\left([w_2][w_2+1][5]-[w_2+1]^2[4]+(2[w_2][w_2+1]-[2][w_2]^2)[3]\right)\\
              &=[w_2+1]^{-2}\left([w_2][w_2+1][5]-[w_2+1]^2[4]+[w_2]([w_2+1]-[w_2-1])[3]\right)\\
              &=[w_2+1]^{-2}\left([w_2]([w_2+1][5]-[w_2-1][3])-[w_2+1]([w_2+1][4]-[w_2][3])\right)\\
              &=[w_2+1]^{-2}\left([w_2][2][w_2+4]-[w_2+1][w_2+4]\right)\\
              &=[w_2+1]^{-2}[w_2+4][w_2-1].
              \end{align*}
              Expanding the other matrices in the same way, we see that 
              \begin{align*}
                \left(\frac{[w_1][w_2]}{[w_1+1][w_2+1]}\right)^{-6}&[w_1+1]^2[w_2+1]^2G^{(5,2)}_{-,-}\\
                &=[w_1][w_1+1][w_2+4][w_2-1]-[w_1+1]^2[w_2+3][w_2-1]\\&\hspace{5em}+\left(2[w_1][w_1+1]-[w_1]^2[2]\right)[w_2+2][w_2-1]\\
                &=[w_2-1]\Big([w_1][w_1+1][w_2+4]-[w_1+1]^2[w_2+3]\\&\hspace{5em}+[w_1]([w_1+1]-[w_1-1])[w_2+2]\Big).
              \end{align*}
Note that the last expression in the brackets takes the same form as
the second line in our evaluation of \eqref{eq:gramdetexpand}
above. Hence we finally arrive at 
\[\Gramdet{5,2}{-,-}=\frac{[w_1]^6[w_2]^6}{[w_1+1]^{8}[w_2+1]^{8}}[w_1-1][w_2-1][w_1+w_2+3].\]
\end{eg}

As demonstrated by this example, calculating $\WWGam$ 
is non-trivial.
However 
we can apply  results of \cite{degiernichols} to calculate it
with respect to the path basis, which we will
see is easier.

\begin{prop}[{\cite[Proposition 5.13]{degiernichols}}]
  In the path basis, 
$\Gramcon$
is diagonal. 
\end{prop}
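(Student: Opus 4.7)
The plan is to prove the stronger orthogonality statement $\langle v_p, v_q\rangle = 0$ for all $p \ne q$ in $\PP_n$, from which diagonality of $\hat{G}^{(n,m)}_{\ee_1,\ee_2}$ is immediate.

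First I would observe that each generator $e_i$ is invariant under the vertical-flip involution $\sigma$, since the diagrams in Figure \ref{fig:1} are all top-bottom symmetric. Consequently every operator $X_i = e_i - c_i\cdot 1$ used in the recursive construction of the path basis satisfies $\sigma(X_i) = X_i$, and contravariance of the form yields $\langle X_i u, w\rangle = \langle u, X_i w\rangle$ for all $u,w \in \Wb$. This is the lever that allows the induction.

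Next, I would induct on the number of tiles separating $p$ and $q$ from the fundamental path $\po$, taking $p$ (without loss of generality) to be built from the larger number of tiles and writing $v_p = X_i v_{p^-}$ where $p^-$ is the predecessor of $p$ in the chosen ordered sequence $\PP(p)$. Pulling $X_i$ across gives
$$\langle v_p, v_q\rangle \;=\; \langle v_{p^-},\, X_i v_q\rangle,$$
and I would then expand $X_i v_q$ using Theorem \ref{thm:pathaction}. Depending on whether $q$ has slope, valley, or peak shape at position $i$, the vector $X_i v_q$ is respectively a scalar multiple of $v_q$, equal to $v_{q^+}$ for $q^+$ obtained from $q$ by adding a tile at $i$, or a linear combination of $v_q$ and the path $v_{q^-}$ obtained by removing the tile at $i$. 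In each case the inductive hypothesis, applied to inner products of $v_{p^-}$ with basis vectors of strictly smaller tile count, kills every term except when $q = p$.

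The main obstacle will be the peak case, where $X_i v_q$ carries a nontrivial coefficient along $v_q$ itself so that the induction does not immediately close when $q = p^-$. Resolving this amounts to checking that the rational functions $r(u) = [u+1]/[u]$ and $k(u)$ appearing in the definition of $X_i$ are engineered precisely to cancel the relevant cross-term; this is a quantum-integer identity traceable to $r(u) + r(-u) = [2]$ and its $k$-analogue at the right boundary, where the choice $\theta = \thetam$ built into the construction of $V^{(n,m)}_{\ee_1,\ee_2}$ intervenes. A more structural alternative would be to exploit the commuting Murphy family $\{J_i\}$ of Proposition \ref{pr:dn210}: the path vectors are expected to be joint eigenvectors with pairwise distinct eigenvalue tuples in the \generic\ range, and once an appropriately $\sigma$-self-adjoint symmetric combination (a local refinement of the $Z_n$ of Theorem \ref{thm:centralelement}) is identified, orthogonality follows from the standard spectral argument for commuting self-adjoint operators.
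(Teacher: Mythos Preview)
The paper does not give its own proof of this proposition; it is cited from \cite{degiernichols}. So there is no in-paper argument to compare against, and your sketch must stand on its own.

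Your inductive scheme does not close at the very case you flag. Take $q=p^{-}$, so that $v_p=X_i v_q$ with $X_i=e_i-c$ and the constant $c$ chosen precisely so that $X_i v_q=v_p$. Pulling $X_i$ across gives
\[
\langle v_p,v_q\rangle=\langle v_q,X_i v_q\rangle=\langle v_q,v_p\rangle,
\]
which is vacuous by symmetry of the form. The identity $r(u)+r(-u)=[2]$ is correct, but it only re-encodes $e_i^2=[2]e_i$ on the two-dimensional span $\{v_q,v_p\}$; it does \emph{not} force the off-diagonal Gram entry to vanish. Concretely, writing the restriction of $e_i$ in the basis $(v_q,v_p)$ and imposing self-adjointness with respect to a symmetric $2\times2$ Gram matrix $\begin{psmallmatrix}a&c\\c&b\end{psmallmatrix}$ yields the single constraint $b-r(u)r(-u)\,a=(r(-u)-r(u))\,c$, which is compatible with any value of~$c$. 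So nothing internal to the pair $(v_q,v_p)$ and the single generator $e_i$ can give $c=0$.

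The route you mention at the end is the one that actually works: the path vectors are joint eigenvectors of a commuting $\sigma$-self-adjoint family refining $Z_n$ (built from the $J_i+J_i^{-1}$), with generically pairwise distinct eigenvalue tuples, and orthogonality follows by the usual spectral argument. But that requires establishing the eigenvector property and the separation of spectra, neither of which appears in your write-up or in the present paper; this is exactly the content supplied in \cite[\S5]{degiernichols}, which is why the result is cited rather than re-proved here.
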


\begin{defn}[{\cite[Definition 5.14]{degiernichols}}]
  We define the functions $f(h)$ and $g(h)$ to be:
  \begin{align*}
    f(h)&=r(w_1-h)r(-w_1+h)\\
    g(h)&=k(w_1-h)k(-w_1+h)
  \end{align*}
  where $r(u)$ and $k(u)$ are as in \eqref{eq:r(u)k(u)}.
\end{defn}
\begin{prop}[{\cite[Proposition 5.15]{degiernichols}}]\label{prop:patheigenvalues}
  The eigenvalue $\lambda_p$ of the Gram matrix,  $\Gramcon$,
  for each path $p$ is
  given by the following recursive procedure. Let $p_0$ be the
  fundamental path, and let $p'$ be a path obtained from another path
  $p$ by the addition of a tile (or half tile) at point $i$. The
  following hold: 
  \begin{itemize}
  \item $\lambda_{p_0}=1$.
  \item If $p'$ and $p'$ differ by a full tile we have $\lambda_{p'}=f(h_{i-1})\lambda_p$.
  \item If $p'$ and $p'$ differ by a half tile we have $\lambda_{p'}=g(h_{n-1})\lambda_p$.
  \end{itemize}
\end{prop}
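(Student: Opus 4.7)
The plan is to induct on the number of tiles used to build $p$ from the fundamental path $p_0$, exploiting the diagonal form of the Gram matrix in the path basis together with the contravariance of $\langle-,-\rangle$. The base case $\lambda_{p_0}=1$ is taken as a normalisation. For the inductive step, suppose $p'$ is obtained from $p$ by adding a full tile at position $i$. Then by construction $v_{p'} = X_i v_p$, where $X_i = e_i - r(w_1 - h_{i-1})\, 1$ when the tile is added from above, or $X_i = e_i - r(-w_1 + h_{i-1})\, 1$ when added from below.

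The key observation is that the generator $e_i$ is fixed by the flip involution, $\sigma(e_i)=e_i$ (evident from its diagrammatic symmetry), so $\sigma(X_i)=X_i$. Contravariance then gives
\[
\langle v_{p'}, v_{p'}\rangle \;=\; \langle X_i v_p,\, v_{p'}\rangle \;=\; \langle v_p,\, X_i v_{p'}\rangle .
\]
I would now expand $X_i v_{p'}$ explicitly using Theorem~\ref{thm:pathaction}. For instance in the case $h_{i-1}\ge 0$ with the tile added from above, Theorem~\ref{thm:pathaction} gives
\[
e_i v_{p'} = r(-w_1+h_{i-1})\, v_{p'} + r(-w_1+h_{i-1})\, r(w_1-h_{i-1})\, v_p,
\]
so after subtracting the scalar term the coefficient of $v_{p'}$ collapses to a scalar multiple $\beta$ and the coefficient of $v_p$ is exactly $r(-w_1+h_{i-1})\, r(w_1-h_{i-1}) = f(h_{i-1})$. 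Since the Gram matrix is diagonal in the path basis (the preceding proposition), $\langle v_p, v_{p'}\rangle = 0$, and the $\beta v_{p'}$ contribution dies. Thus
\[
\langle v_{p'}, v_{p'}\rangle \;=\; f(h_{i-1})\, \langle v_p, v_p\rangle ,
\]
which is the desired recursion. The remaining full-tile case $h_{i-1} < 0$ runs identically with the roles of the two $r(\cdot)$ arguments swapped, and the product is again $f(h_{i-1})$.

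For a half tile at the right boundary, the same argument is applied with $X_n$ in place of $X_i$, $e_n$ replacing $e_i$, and the final bullet of Theorem~\ref{thm:pathaction} supplying the expansion; the constants $r$ are replaced by $k$, and the surviving coefficient is $k(-w_1+h_{n-1})\, k(w_1-h_{n-1}) = g(h_{n-1})$. The main obstacle is purely bookkeeping: one must check that the four sign conventions (tile above vs.\ below, with $h_{i-1}\ge 0$ vs.\ $<0$, and analogously for $h_{n-1}$) really do pair up so that the scalar pulled out of $\langle v_p, X_i v_{p'}\rangle$ is exactly $f(h_{i-1})$ (resp.\ $g(h_{n-1})$) and not some other product of $r$'s. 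Once the cases are tabulated, each reduces to the same cancellation as above, and the induction closes.
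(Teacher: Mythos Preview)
The paper does not supply its own proof of this proposition; it is simply quoted from de~Gier--Nichols with no argument given. So there is nothing in the present paper against which to compare your attempt.

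That said, your approach is the natural one and is correct. Since each $e_i$ is fixed by the flip involution $\sigma$, so is $X_i$; contravariance then gives $\langle v_{p'},v_{p'}\rangle=\langle v_p,X_iv_{p'}\rangle$. Theorem~\ref{thm:pathaction} shows that $e_iv_{p'}$ (and hence $X_iv_{p'}$) lies in the span of $v_p$ and $v_{p'}$ alone, and in every one of the four sign cases the coefficient of $v_p$ in $X_iv_{p'}$ is $r(w_1-h_{i-1})r(-w_1+h_{i-1})=f(h_{i-1})$ (respectively $k(w_1-h_{n-1})k(-w_1+h_{n-1})=g(h_{n-1})$ at the right boundary). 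Diagonality of the Gram matrix in the path basis then kills the $v_{p'}$-contribution, and the recursion follows. This is essentially the argument one finds in the cited source.
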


Thus to find 
$\WWGam$ 
we  take the
product of the eigenvalues corresponding to the paths that form a
basis of that cell module (having chosen $\theta$ appropriately). 

To illustrate, we 
return to Example \ref{eg:gram1} above, and recalculate the Gram
matrix with respect to the path basis. 

\begin{eg}
The basis of $\W{5,2}{-,-}$ here consists of all paths of a final height $-3$ or lower, and our value of $\theta$ is $-2-w_1-w_2$. These paths are given below, along with the tiles that are needed to construct them.
\[\begin{tikzpicture}[scale=0.4]
  \draw (0,0)--++(1,-1)--++(1,1)--++(1,-1)--++(1,-1)--++(1,-1);
\end{tikzpicture}\hspace{1em}
\begin{tikzpicture}[scale=0.4]
  \draw (0,0)--++(1,1)--++(1,-1)--++(1,-1)--++(1,-1)--++(1,-1);
  \draw[dashed] (0,0)--++(1,-1) node[yshift=12pt]{$0$} --++(1,1);
\end{tikzpicture}\hspace{1em}
\begin{tikzpicture}[scale=0.4]
  \draw (0,0)--++(1,-1)--++(1,-1)--++(1,1)--++(1,-1)--++(1,-1);
  \draw[dashed] (1,-1)--++(1,1) node[yshift=-12pt]{$-1$} --++(1,-1);
\end{tikzpicture}\hspace{1em}
\begin{tikzpicture}[scale=0.4]
  \draw (0,0)--++(1,-1)--++(1,-1)--++(1,-1)--++(1,+1)--++(1,-1);
  \draw[dashed] (1,-1)--++(1,1) node[yshift=-12pt]{$-1$} --++(1,-1);
  \draw[dashed] (2,-2)--++(1,1) node[yshift=-12pt]{$-2$} --++(1,-1);
\end{tikzpicture}\hspace{1em}
\begin{tikzpicture}[scale=0.4]
  \draw (0,0)--++(1,-1)--++(1,-1)--++(1,-1)--++(1,-1)--++(1,1);
  \draw[dashed] (1,-1)--++(1,1) node[yshift=-12pt]{$-1$} --++(1,-1);
  \draw[dashed] (2,-2)--++(1,1) node[yshift=-12pt]{$-2$} --++(1,-1);
  \draw[dashed] (3,-3)--++(1,1) node[yshift=-12pt]{$-3$} --++(1,-1);
\end{tikzpicture}\hspace{1em}
\begin{tikzpicture}[scale=0.4]
  \draw (0,0)--++(1,-1)--++(1,-1)--++(1,-1)--++(1,-1)--++(1,-1);
  \draw[dashed] (1,-1)--++(1,1) node[yshift=-12pt]{$-1$} --++(1,-1);
  \draw[dashed] (2,-2)--++(1,1) node[yshift=-12pt]{$-2$} --++(1,-1);
  \draw[dashed] (3,-3)--++(1,1) node[yshift=-12pt]{$-3$} --++(1,-1);
  \draw[dashed] (4,-4)--++(1,1) node[yshift=-12pt]{$-4$} --++(0,-2);
\end{tikzpicture}\hspace{1em}\]
The eigenvalues of the Gram matrix for these paths
are \[1,f(0),f(-1),f(-1)f(-2),f(-1)f(-2)f(-3),f(-1)f(-2)f(-3)g(-4)\]
respectively. The Gram determinant is the product of these, which we
evaluate to be
\[\Gramdet{5,2}{-,-}=\frac{[w_1]^2}{[w_1+1]^4[w_2+1]^2}[w_1-1][w_2-1][w_1+w_2+3].\]
Note that this is the same result as Example \ref{eg:gram1}, up to rescaling by a power of the parameters.
\end{eg}

Note this is  easier than the calculation in Example \ref{eg:gram1}. 
However in general $\WWGam$ may 
still be difficult to calculate, due to the large number of paths and
tiles as the cell modules increase in size. 
We next appeal to
results about changing bases and the effect on the Gram determinant. 
\begin{thm}\label{thm:gram}
  With respect to the path basis above, the Gram determinant of 
$\W{n,m}{\ee_1,\ee_2}$ is 
\begin{multline*}
\Gramdet{n,m}{\ee_1,\ee_2}=(\dl^{\frac{1}{2}(1-\ee_1)}\dr^{\frac{1}{2}(1-\ee_2)})^{\dim
  \W{n,m}{\ee_1,\ee_2}}\prod_{k=0}^{\frac{1}{2}(n-m-3)}\Big([\half(n-m-2k-1)]\\
\times[\ee_1w_1-\half(-n+m+2k+1)][\ee_2w_2-\half(-n+m+2k+1)]\\
\times[\ee_1w_1+\ee_2w_2-\half(n+m-2k-1)][w_1+1]^{-2}[w_2+1]^{-2}\Big)^{\dim\W{n,n-1-2k}{\ee_1,\ee_2}}.
\end{multline*}
\end{thm}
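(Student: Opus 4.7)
The plan is to compute $\Gamma^{(n,m)}_{\ee_1,\ee_2}$ in the path basis using the preceding results, and then translate. By Proposition~\ref{prop:patheigenvalues} the path-basis Gram matrix is diagonal, so $\Gamma^{(n,m)}_{\ee_1,\ee_2}=\prod_p \lambda_p$ where the product runs over the path basis of $W^{(n,m)}_{\ee_1,\ee_2}$. By Theorem~\ref{thm:isomorphism} together with Proposition~\ref{prop:moduleV}, this index set consists of the paths $p=(h_0,\dots,h_n)$ with $\ee_1 h_n \ge m+1$, once $\theta=-m+\ee_1 w_1+\ee_2 w_2$ is fixed.

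Each eigenvalue $\lambda_p$ factors, via the recursion of Proposition~\ref{prop:patheigenvalues}, as a product of $f(h_{i-1})$'s for the full tiles and $g(h_{n-1})$'s for the half tiles added along the minimal route from $p_0$ to $p$. Writing out the full product over basis paths, each factor $f(h)$ (respectively $g(h)$) appears with multiplicity equal to the number of basis paths whose route passes through a tile at that given height. The intended key lemma is: for $0\le k\le\tfrac12(n-m-3)$, the number of basis paths that contain the tile at the $k$-th level (measured outward from the fundamental envelope) equals $\dim W^{(n,n-1-2k)}_{\ee_1,\ee_2}$; this is essentially because paths that dip at least $k$ rungs deeper than required are in bijection with the path basis of a smaller cell module with propagation number $n-1-2k$.

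Inserting the explicit expressions~(\ref{eq:r(u)k(u)}) for $r(u)$ and $k(u)$ into $f$ and $g$, each $f(h)$ becomes a quotient of quantum integers, and $g(h)$ a quotient involving $[w_2+1]^2$ in the denominator. Raising these to the above dimension exponents and telescoping across consecutive levels $k$, the internal factors cancel in pairs, leaving precisely the four boundary quantum integers $[\tfrac12(n-m-2k-1)]$, $[\ee_1w_1-\tfrac12(-n+m+2k+1)]$, $[\ee_2w_2-\tfrac12(-n+m+2k+1)]$, and $[\ee_1w_1+\ee_2w_2-\tfrac12(n+m-2k-1)]$ at each level, with the $[w_1+1]^{-2}[w_2+1]^{-2}$ denominators surviving untouched. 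The $\ee_i$ signs emerge through $[-u]=-[u]$ and through the fact that the envelope opens upward when $\ee_1=+1$ and downward when $\ee_1=-1$ (analogously for $\ee_2$ via the right boundary and the half-tile contributions).

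The main obstacle will be the tile-counting step --- verifying precisely that the multiplicity of each $f(h)$ or $g(h)$ in $\prod_p \lambda_p$ equals $\dim W^{(n,n-1-2k)}_{\ee_1,\ee_2}$. This requires a careful parametrisation of paths by depth-below-envelope and lateral position, and is where the distinction between full-tile and half-tile contributions (the latter encoding $\ee_2$-dependence at the right boundary) must be handled. The overall prefactor $(\dl^{(1-\ee_1)/2}\dr^{(1-\ee_2)/2})^{\dim W^{(n,m)}_{\ee_1,\ee_2}}$ then arises from the rescaling between the path basis of \S\ref{sec:dnpath} and the standard diagram basis $\WWB$ appearing in~(\ref{eq:defdetgamma}), contributing one factor of $\dl$ (respectively $\dr$) per basis vector whenever $\ee_1=-1$ (respectively $\ee_2=-1$) --- matching the discrepancy between the path-basis and diagram-basis computations carried out in the worked example.
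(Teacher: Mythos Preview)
Your approach is genuinely different from the paper's, and the difference matters.

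You propose to count the multiplicity of every $f(h)$ and $g(h)$ factor directly and then telescope. The paper does something cleverer: it only counts the $g(h)$ (half-tile) contributions. These are easy, because each basis path contributes exactly one half-tile at the right boundary, and the height there is $h_{n-1}$; the number of basis paths with $|h_{n-1}|\geq|h|$ is precisely $\dim W^{(n,n-1-2k)}_{\ee_1,\ee_2}$ for the corresponding $k$. Substituting $\theta=-m+\ee_1w_1+\ee_2w_2$ into $g(h)$ then yields three of the four quantum-integer factors (the ones not involving $w_1$ alone) together with $[w_2+1]^{-2}$. The leftover $\mu$, coming from the full-tile $f(h)$ factors and the $[w_1-h]^{-2}$ denominators of $g(h)$, is some unknown product of terms of the form $[w_1-a]$.

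Rather than compute $\mu$, the paper exploits the left--right symmetry: redo the whole construction with a \emph{right-blob} path basis (swap $e_i\leftrightarrow e_{n-i}$, $w_1\leftrightarrow w_2$, $\ee_1\leftrightarrow\ee_2$). This second basis gives the three factors not involving $w_2$ alone, plus $[w_1+1]^{-2}$, with an unknown leftover $\mu'$ in $[w_2-a]$'s. Since the two path bases differ from the standard diagram basis by upper-triangular transformations with diagonal entries in the parameters $\dl,\dr$ (and crucially not $\dd$), the two Gram determinants agree up to powers of $\dl,\dr$. Comparing forces the full four-factor product, and a separate elementary argument on the diagram basis pins down the $\dl,\dr$ prefactor.

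Your direct route is not obviously wrong, but your ``key lemma'' about tile multiplicities is stated only for the half-tile level structure; for full tiles the heights $h_{i-1}$ vary across all interior positions $i$, and the multiplicity of a given $f(h)$ depends on the joint distribution of heights across all positions, not just the final one. The telescoping you sketch would require a combinatorial identity relating these interior height statistics to the dimensions $\dim W^{(n,n-1-2k)}_{\ee_1,\ee_2}$, and that identity is neither stated nor obvious. The paper's symmetry argument sidesteps this entirely --- that is the idea you are missing.
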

\begin{proof}
  From the definitions of $f(h)$ and $g(h)$, we see that
  $\Gramdet{n,m}{\ee_1,\ee_2}$ is a product of box numbers of the
  form $[a_1w_1+a_2w_2-b]^c$, where $a_i\in\{-1,0,1\}$ and
  $b,c\in\Z$. Note that all such terms with either $a_1=0$ or
  $a_1,a_2\neq0$ arise from the contributions of some $g(h)$ at the
  right boundary. Moreover, apart from $[w_2+1]$ they all appear to a
  positive power. Therefore when calculating the product over all
  permitted paths, there can be no cancellation of these terms. To
  determine the power of $[w_2+1]$, we multiply the above product by
  $[w_2+1]^{-2}$ for every factor $g(h)$. The power to which $g(h)$
  appears in the product is the number of paths of final height $h'$
  for $|h'|\geq|h|$, which in turn is the dimension of the cell module
  defined by paths of such height. Therefore we see that 
\begin{multline*}
  \mu\prod_{k=0}^{\frac{1}{2}(n-m-3)}\Big([\half(n-m-2k-1)][\ee_2w_2-\half(-n+m+2k+1)]\\
\times[\ee_1w_1+\ee_2w_2-\half(n+m-2k-1)][w_2+1]^{-2}\Big)^{\dim \W{n,n-1-2k}{\ee_1,\ee_2}}
\end{multline*}
is a factor of $\Gramdet{n,m}{\ee_1,\ee_2}$, where $\mu$ is a
product of box numbers of the form $[w_1-a]$ for $a\in\Z$.

In order to determine the other factors, we will change basis and
recalculate the Gram determinant. First, note from the proof of
Theorem \ref{thm:isomorphism} that the change of basis matrix between
the standard and path bases is upper triangular, with diagonal entries
equal to powers of the parameters for the symplectic blob
algebra. Note also that these diagonal entries do not contain the
parameter $\dd$. Indeed, the relations that could result in a factor
of $\dd$ must be the standard Temperley-Lieb relations, i.e.
\[e_ie_{i\pm1}e_i=e_i\text{~~~~~and~~~~~}e_i^2=\dd e_i,\]
but these cannot appear in the leading term of the path basis as we
can never add tiles in position $i$, followed by $i\pm1$, then in $i$
again, nor can we add tiles in position $i$ twice in a row. We also
cannot obtain a $\dd$ by adding to the initial diagram $d_{m+1}$ (or
$d_{-m-1},d'_{m+1},d'_{-m-1}$). 

From the standard diagram basis, we change to an alternative path
basis, which we obtain by replacing $e_i$ by $e_{n-i}$, $w_1$ by $w_2$
and $\ee_1$ by $\ee_2$ in the above. In other words, we are working
with the path basis defined by the right blob as opposed to the
left. For the same reasons as in Theorem \ref{thm:isomorphism}, the
change of basis matrix is again upper triangular. Therefore the change
of basis matrix between the first and second path bases is upper
triangular, and has determinant equal to a product of powers of the
parameters (except $\dd$, as before). Moreover, by considering the
contribution at the half tile boundary in the second path basis, we
see that 
\begin{multline*}
  \mu'\prod_{k=0}^{\frac{1}{2}(n-m-3)}\Big([\half(n-m-2k-1)][\ee_1w_1-\half(-n+m+2k+1)]\\
\times[\ee_1w_1+\ee_2w_2-\half(n+m-2k-1)][w_1+1]^{-2}\Big)^{\dim \W{n,n-1-2k}{\ee_1,\ee_2}}
\end{multline*}
is a factor of $\Gramdet{n,m}{\ee_1,\ee_2}$, where $\mu'$ is a
product of powers of the parameters and box numbers of the form
$[w_2-a]$ for $a\in\Z$. When we combine these two results we have
\begin{multline*}
\Gramdet{n,m}{\ee_1,\ee_2}=
\lambda\prod_{k=0}^{\frac{1}{2}(n-m-3)}\Big([\half(n-m-2k-1)][\ee_1w_1-\half(-n+m+2k+1)]\\
\times[\ee_2w_2-\half(-n+m+2k+1)][\ee_1w_1+\ee_2w_2-\half(n+m-2k-1)]
[w_1+1]^{-2}[w_2+1]^{-2}\Big)^{\dim \W{n,n-1-2k}{\ee_1,\ee_2}},
\end{multline*}
where $\lambda$ is a product of powers of the parameters $\dl,\dr$
(since our parameterisation has $\kl=\kr=1$). To determine $\lambda$,
we return to the Gram matrix of the standard diagram basis and
determine the highest powers of $\dl$ and $\dr$ which divide the
determinant. Since propagating lines cannot cross, any non-zero entry
in the Gram matrix must have a factor of $\dl$ (resp. $\dr$) if there
is a left (resp. right) blob on propagating lines. Therefore we can
extract a factor of
$(\dl^{\frac{1}{2}(1-\ee_1)}\dr^{\frac{1}{2}(1-\ee_2)})^{\dim\W{n,m}{\ee_1,\ee_2}}$
from the matrix. In fact, this is the largest power of $\dl$ and $\dr$
we can extract from any row. We deal with factors that may arise from
further left blobs, those arising from the right follow by
symmetry. Suppose a diagram has a horizontal arc with a left
blob. Then this must be the outermost arc of a left-exposed nest of
arcs. We construct a diagram which, when taking the inner product with
the first, does not add any factors of $\dl$ with this nested set of
arcs. Indeed, we simply place undecorated arcs in the leftmost side of
the diagram so that the blobbed arc forms a closed loop after taking
the inner product. This results in a factor of $\kl=1$ appearing, and
no $\dl$. 

Finally, the range of values over which we take the product ensure
that neither $[w_1]$ nor $[w_2]$ can appear. Therefore $\lambda$ must
be the greatest factor of $\dl$ and $\dr$, and the result follows.
\end{proof}
This final example returns to the cell module $\W{5,2}{-,-}$.
\begin{eg}
For $n=5$, $m=2$, we have $\half(n-m-3)=0$. Therefore by Theorem \ref{thm:gram}, we have
\begin{align*}
  \Gramdet{5,2}{-,-}&=([w_1][w_1+1]^{-1}[w_2][w_2+1]^{-1})^6[w_1+1]^{-2}[w_2+1]^{-2}[1][-w_1+1][-w_2+1][-w_1-w_2-3]\\
  &=[w_1]^6[w_1+1]^{-8}[w_2]^6[w_2+1]^{-8}[w_1-1][w_2-1][w_1+w_2+3].
\end{align*}
We can compare this with Example \ref{eg:gram1} to see that we indeed have the Gram determinant.
\end{eg}

\section{Homological tools for decomposition matrices and blocks of $\bnp$}
\label{sec:hom}

In this section, we will use the constants
$\alpha^{(n,m)}_{\ee_1,\ee_2}$ and homomorphisms from \cite{mgp3} to
determine the block structure of $b_n^x$ for $\ddd\in \C^6$.
Fixing $\ddd$ is done by choosing
values for  $q, w_1$ and $w_2$.
We will here restrict those
values such that none of $[w_1],[w_2],[w_1+1]$ or $[w_2+1]$ are zero.
We now change our parameterisation
to that of GMP2 in \S\ref{sect:notn} above, in order to
use
the results of \cite{mgp3}. This is achieved by
rescaling generators in the following way: 
\begin{align*}
  e_0&\mapsto-[w_1+1]e_0,\\
  e_i&\mapsto-e_i\text{ for }1\leq i\leq n-1 \\
  e_n&\mapsto-[w_2+1]e_n.
\end{align*}
\subsection{Globalisation functors}

We will also use the globalisation functors to work in a
``large $n$
limit'' symplectic blob algebra where both parameters are
positive. Having determined blocks in this limit, we will then
localise back to the original algebra with original parameter
values. The following proposition taken from 
\cite[\S3]{mgp3} justifies this. 
\begin{prop}[{\cite[\S3]{mgp3}}]\label{prop:glob}
  There exist right exact globalisation functors
  \begin{align*}
    G:b^x_n-mod \longrightarrow b^x_{n+1}-mod
  , \hspace{1in}  
    G':b^x_n-mod \longrightarrow b^x_{n+1}-mod
  \end{align*}
with the following properties:
\begin{enumerate}
\item There is a parameter change from $\bnx$ to $b^x_{n+1}$ under $G$
  which sends $w_1\mapsto -w_1-1$;
\item There is a parameter change under $G'$ 
  which sends $w_2\mapsto -w_2-1$;
\item $G\W{n,m}{\ee_1,\ee_2}=\W{n+1,m+\ee_1}{-\ee_1,\ee_2}$;
\item $G'\W{n,m}{\ee_1,\ee_2}=\W{n+1,m+\ee_2}{\ee_1,-\ee_2}$.
\end{enumerate}
There are also exact localisation functors
\begin{align*}
  F:b^x_n-mod &\longrightarrow b^x_{n-1}-mod \\
  F': b^x_n-mod &\longrightarrow b^x_{n-1}-mod 
\end{align*}
such that $F\circ G=\mathrm{id}$ and $F'\circ G'=\mathrm{id}$, and also
\begin{enumerate}
\item $F\W{n,m}{\ee_1,\ee_2}=\begin{cases}
    \W{n-1,m+\ee_1}{-\ee_1,\ee_2}&\text{ if }m\neq n-1\text{ or }\ee_1=-1\\
    0&\text{ if }m=n-1\text{ and }\ee_1=1;
    \end{cases}$
\item $F'\W{n,m}{\ee_1,\ee_2}=\begin{cases}
    \W{n-1,m+\ee_2}{\ee_1,-\ee_2}&\text{ if }m\neq n-1\text{ or }\ee_2=-1\\
    0&\text{ if }m=n-1\text{ and }\ee_2=1.
    \end{cases}$
\end{enumerate}
\end{prop}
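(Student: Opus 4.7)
The plan is to realise $G$ and $G'$ as functors of tensor-induction along idempotent-corner embeddings, following the standard globalisation/localisation construction for diagram algebras developed in \cite{M91,martsaleur,blobcgm}. Since the statement is essentially a summary of \cite[\S3]{mgp3}, I outline the strategy rather than grind through every detail.

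First, I would identify an idempotent $\epsilon_L \in b^x_{n+1}$ given by an appropriately scaled diagram of the form ``the $n$-strand identity with a left-decorated propagating strand adjoined on the left'', and check that the corner algebra $\epsilon_L b^x_{n+1} \epsilon_L$ is isomorphic as an algebra to $b^x_n$. The parameter shift $w_1 \mapsto -w_1-1$ arises because in the corner algebra the original left-blob generator $e_0$ of $b^x_n$ is represented by a composite in $b^x_{n+1}$ that involves two left blobs on the inserted strand, which straighten via the $\delta_L$-row of Table~\ref{blobtab}. Converting the resulting scalar back to the parameterisation of Table~\ref{tab:repara} produces the substitution $w_1 \mapsto -w_1 - 1$. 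Symmetrically, $G'$ is built from an idempotent $\epsilon_R$ formed from a right-decorated strand on the right side, giving $w_2 \mapsto -w_2-1$.

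With the corner isomorphism in place, I would set $G(M) = b^x_{n+1} \epsilon_L \otimes_{b^x_n} M$ and $F(N) = \epsilon_L N$. Right-exactness of $G$ is automatic since tensoring is right exact, $F$ is exact because it is multiplication by an idempotent, and $F \circ G \cong \mathrm{id}$ follows from $\epsilon_L b^x_{n+1} \epsilon_L \cong b^x_n$ applied termwise to the tensor product. The statements for $G'$ and $F'$ are identical with $L \leftrightarrow R$. For the effect on cell modules I would use the diagrammatic basis from \S\ref{sect:cell}: tensoring with $b^x_{n+1}\epsilon_L$ adjoins a new leftmost propagating strand, and a short case-analysis shows that if $\ee_1 = +1$ the new strand carries a left blob (flipping the parity to $-\ee_1$ and raising the propagating count by $1$), while if $\ee_1 = -1$ the new undecorated strand absorbs the existing left-blobbed strand, again flipping $\ee_1$ and shifting $m$ by $-1$. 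This gives (3) and (4). For localisation, $F$ picks out the sub-basis of half-diagrams compatible with $\epsilon_L$; the vanishing in (5) and (6) occurs exactly when $m = n-1$ and $\ee_1 = +1$ (respectively $\ee_2 = +1$), since in that case every basis diagram has all strands propagating with a decoration pattern that is annihilated by the idempotent.

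The main obstacle is the parameter-shift computation inside the corner isomorphism: although the diagrammatic picture is natural, one must track carefully how each $e_i$ of $b^x_n$ embeds into $\epsilon_L b^x_{n+1} \epsilon_L$ and how the straightening scalars compose to give exactly $w_1 \mapsto -w_1-1$ rather than some other shift such as $w_1 \mapsto w_1+1$. A secondary subtlety is well-posedness when $[w_1+1]$ or $[w_2+1]$ vanishes, which is handled, as elsewhere in the paper, by restricting to the generic open set and extending by analytic continuity.
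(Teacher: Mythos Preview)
The paper does not give its own proof of this proposition: it is stated with attribution to \cite[\S3]{mgp3} and used as a black box. So there is no argument in the paper to compare against; your outline is essentially a reconstruction of what \cite{mgp3} does, namely the standard idempotent-corner globalisation/localisation machinery for tower-of-diagram-algebras as in \cite{M91,martsaleur,blobcgm}.

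Your sketch is sound in shape, but one detail is slightly off and would cause trouble if you tried to fill it in literally. The idempotent used is not ``identity plus an extra decorated propagating strand on the left''; it is (a rescaling of) the left-blob generator $e_0$ itself in $b^x_{n+1}$, i.e.\ the $(n{+}1)$-strand identity diagram with a left blob on the first strand. The corner $e_0\,b^x_{n+1}\,e_0$ is then identified with $b^x_n$ by reading off what the generators $e_0 e_i e_0$ do on the remaining $n$ strands; the new left-blob generator in the corner is $e_0 e_1 e_0$, and it is the straightening of this product (two left blobs collapsing via the $\delta_L$ relation, together with an undecorated loop) that produces exactly the substitution $w_1\mapsto -w_1-1$ in the GMP parameterisation. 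Your intuition about ``two left blobs on the inserted strand'' is the right mechanism, but it lives inside $e_0 e_1 e_0$ rather than in a separately adjoined strand. The same remark applies mutatis mutandis to $\epsilon_R = e_n$ and $w_2\mapsto -w_2-1$. With that correction, the rest of your outline (right-exactness of the tensor functor, exactness of the Schur functor, $F\circ G=\mathrm{id}$, and the diagram-basis bookkeeping for the effect on cell modules) goes through as you describe.
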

Note that the localisation functor can annihilate modules, and
therefore it is possible for a block to ``break up'' when
localising. We will address this on a case by case basis when
determining the blocks below. Also, since we will always localise back
after globalising, we need only consider in the arguments below cell
modules $\W{N,m}{\ee_1,\ee_2}$ with $m\ll N$. 

\subsection{On standard module homomorphisms}

We now recall the homomorphisms from \cite{mgp3} and reformulate them
into the notation consistent with this paper.
\begin{thm}[{\cite[Theorem \ref{thm:hom4glob1}]{mgp3}}]\label{thm:qhom}
  Let $q$ be a primitive $2\ell$-th root of unity and
  $w_1$, $w_2\not\in\Z$. Suppose that $m-2\ell\geq0$ (with equality if
  and only if $\ee_1=\ee_2=1$). Then there exists a non-zero
  homomorphism
  \[\psi:\W{n,m}{\ee_1,\ee_2}\longrightarrow \W{n,m-2\ell}{\ee_1,\ee_2}.\]
\end{thm}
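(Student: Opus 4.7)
The plan is to construct $\psi$ explicitly and then verify it is a well-defined module map by checking vanishing on the annihilator of a generator. First I would use Proposition~\ref{prop:glob} to pass to a ``large $N$'' setting (applying $G, G'$ repeatedly) so that both $m$ and $m-2\ell$ sit far from the boundary $N-1$, thus avoiding any localisation-induced collapse. Since $G$ flips $\ee_1$ and $G'$ flips $\ee_2$, the four sign cases reduce (up to the parameter flips $w_1\mapsto -w_1-1$, $w_2\mapsto -w_2-1$, which respect the hypothesis $w_i\notin\Z$) to a single representative case, say $\ee_1=\ee_2=-1$. Note that $m-2\ell\geq 0$ with equality only if $\ee_1=\ee_2=+1$ guarantees that after this reduction the target module is still a non-trivial cell module with at least one (decorated) propagating line on each side.

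Next I would produce the candidate image of the generating half-diagram $d_{-m-1}\in W^{(n,m)}_{-,-}$ (in the notation of (\ref{eq:dmmone})). The prescription comes from the ordinary blob algebra: at $q$ a primitive $2\ell$-th root of unity and $w_1\notin\Z$, there is a classical singular-vector type homomorphism $W_{-(m+1)}(n)\to W_{-(m+1-2\ell)}(n)$ obtained by capping $2\ell$ of the undecorated propagating lines with a Jones--Wenzl-like idempotent sandwiched between arcs. I would take $\psi(d_{-m-1})$ to be the corresponding diagrammatic element viewed in $W^{(n,m-2\ell)}_{-,-}$, with coefficients coming from the blob singular vector. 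This is well-defined because $m-2\ell\geq 0$ ensures the requisite propagating lines (including the left-decorated one) survive.

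To show $\psi$ extends to a $b_n^x$-module homomorphism one must verify that every generator $g$ annihilating $d_{-m-1}$ also annihilates $\psi(d_{-m-1})$. The generators $e_1,\ldots,e_{n-1}$ act via ordinary Temperley--Lieb/blob relations and so the verification is the same one that works in the blob setting, using the root-of-unity identity $[2\ell]=0$ at the key steps. The heart of the argument, and the expected main obstacle, is the simultaneous compatibility with the two boundary generators $e_0$ and $e_n$. I would attack this by passing to the de Gier--Nichols path basis via Theorem~\ref{thm:pathaction}: since $w_1,w_2\notin\Z$, all the rational scalars $r(w_1-h)$ and $k(w_1-h)$ appearing in the action are well-defined, and the boundary actions are diagonalisable. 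Rewriting $\psi(d_{-m-1})$ in the path basis reduces the required identities to a finite set of quantum-integer relations at $q^{2\ell}=1$, which then hold by the Tchebychev/vanishing identity $[2\ell]=0$ and its consequences for $[n\pm 2\ell]=\pm[n]$.

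Finally I would verify $\psi\neq 0$ by computing its leading term: expanding $\psi(d_{-m-1})$ in the path basis of Section~\ref{sec:dnpath}, the coefficient on the minimal path reaching the new final height $m-2\ell+1$ is a product of non-zero $[w_i\pm a]$ factors (non-zero by hypothesis $w_i\notin\Z$), so the image is non-trivial. Localising back via $F, F'$ to the original rank $n$ preserves non-triviality because both $W^{(n,m)}_{\ee_1,\ee_2}$ and $W^{(n,m-2\ell)}_{\ee_1,\ee_2}$ survive each localisation step under our hypothesis $m-2\ell\geq 0$, giving the desired non-zero homomorphism in the original algebra.
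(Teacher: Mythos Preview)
The paper does not prove this statement at all: Theorem~\ref{thm:qhom} is simply quoted from \cite{mgp3} (specifically \cite[Theorem~\ref{thm:hom4glob1}]{mgp3}) as one of several homomorphism results recalled in \S\ref{sec:hom} for later use in the block computations. There is therefore no proof in the present paper to compare your proposal against.

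That said, your outline is broadly in the spirit of how such results are proved in the cited source: reduce sign cases via the globalisation functors of Proposition~\ref{prop:glob}, construct an explicit singular-vector image of a generating half-diagram using a root-of-unity Jones--Wenzl type element, and then verify the annihilation conditions generator by generator. One point to tighten: your appeal to the de~Gier--Nichols path basis (Theorem~\ref{thm:pathaction}) for the boundary checks is not straightforward, because that basis lives in $W^{(n)}(b)$ and the identification of Theorem~\ref{thm:isomorphism} with $W^{(n,m)}_{\ee_1,\ee_2}$ requires a specific choice of $\theta$ depending on $m$; the source and target modules would correspond to \emph{different} values of $\theta$, so you cannot work in a single path-basis picture simultaneously. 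In practice the boundary compatibility in \cite{mgp3} is handled by direct diagrammatic computation rather than via the path basis.
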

\begin{thm}[{\cite[Theorem \ref{thm:hom1glob1}]{mgp3}}]\label{thm:w1hom}
  Let $q$ be a primitive $2\ell$-th root of unity and
  $w_1\in\Z$. Suppose for $r\in\Z$ that
  $m>m-2(\ee_1w_1+r\ell)>0$. Then there exists a non-zero homomorphism
  \[\psi:\W{n,m}{\ee_1,\ee_2}\longrightarrow \W{n,m-2(\ee_1w_1+r\ell)}{-\ee_1,\ee_2}.\]
  If $q$ is not a root of unity, then set $\ell=0$ in the above.
\end{thm}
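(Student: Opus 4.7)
The plan is to construct $\psi$ explicitly using the path-basis realisation of cell modules inside $W^n(b)$ from Theorem \ref{thm:isomorphism}, and then verify well-definedness and non-vanishing. As a sanity check I would first verify the necessary block condition from Corollary \ref{th:cellblockH}: with $m' = m - 2(\ee_1 w_1 + r\ell)$, the argument of the central element for the target satisfies
\[
-m' - \ee_1 w_1 + \ee_2 w_2 \;=\; (-m + \ee_1 w_1 + \ee_2 w_2) + 2 r\ell,
\]
and under our hypotheses ($q$ a primitive $2\ell$-th root of unity, or $\ell=0$ if $q$ is generic) we have $q^{2\ell}=1$, so $[x+2r\ell]=[x]$ and $[2x+4r\ell]=[2x]$. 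Consequently $\alpha^{(n,m')}_{-\ee_1,\ee_2}=\alpha^{(n,m)}_{\ee_1,\ee_2}$, confirming that both modules lie in the same block.

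Next I would realise source and target inside a common $W^n(b)$. Choosing $\theta = -m + \ee_1 w_1 + \ee_2 w_2$, Theorem \ref{thm:isomorphism} gives $W^{(n,m)}_{\ee_1,\ee_2} \cong V^{(n,m)}_{\ee_1,\ee_2} \subseteq W^n(b(\theta))$; the corresponding value for the target is $\theta' = -m' - \ee_1 w_1 + \ee_2 w_2 = \theta + 2r\ell$. Using formula (\ref{eq:b}) one checks $b(\theta') = b(\theta)$: since $q$ is primitive $2\ell$-th root of unity, $q^\ell = -1$, so each factor $[\,\cdot\,]$ in (\ref{eq:b}) picks up a sign $(-1)^r$ under the shift by $r\ell$, and the two signs cancel in the product (the $n$ odd case is identical). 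Hence both $V^{(n,m)}_{\ee_1,\ee_2}$ and $V^{(n,m')}_{-\ee_1,\ee_2}$ sit inside the same $W^n(b)$, one supported on paths of high final height, the other on paths of low final height.

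The core of the proof is to produce the non-zero morphism between these two submodules of $W^n(b)$. At a generic $\theta$ the two submodules are disjoint; the root-of-unity or integral-$w_1$ hypothesis creates additional zeros of the boundary function $k(u)$ from (\ref{eq:r(u)k(u)}) (via the identity $[x+2r\ell]=[x]$), producing a richer chain of submodules in $W^n(b)$. I would exploit this chain to locate a subquotient isomorphic to $W^{(n,m')}_{-\ee_1,\ee_2}$ in which the simple head of $W^{(n,m)}_{\ee_1,\ee_2}$ occurs, inducing the desired $\psi$. Equivalently, in the diagrammatic language of \cite{mgp3}, $\psi$ sends a generating half-diagram of the source (for instance one of the diagrams $d_{\pm(m+1)}, d'_{\pm(m+1)}$ as in (\ref{eq:dmp1})--(\ref{eq:dpmmone})) to an explicit linear combination of half-diagrams in the target, obtained by nesting $\ee_1 w_1 + r\ell$ pairs of propagating lines into arcs with a left blob and appropriate decorations, with coefficients given by products of $q$-integers controlled by $w_1, r, \ell$.

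The main obstacle will be the verification that the candidate $\psi$ respects all generator actions (not just those of the $e_i$ which straightforwardly move tiles), and that the image is non-zero. Well-definedness reduces to $q$-integer identities that hold precisely under $w_1 \in \Z$ and $q^{2\ell}=1$, exactly as the compatibility of $k(u)$ with the path action was exploited in Proposition \ref{prop:moduleV}. Non-vanishing amounts to checking that a specific product of $f(h)$ and $k(h)$ factors encountered along the connecting path from the minimal source path to the minimal target path is non-zero; the strict inequalities $0 < m' < m$ guarantee that none of the required $q$-integer denominators collapse.
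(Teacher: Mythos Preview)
The paper does not prove this theorem; it is quoted verbatim from \cite{mgp3} and used as a black box. So there is no in-paper proof to compare against. That said, your proposal has a genuine gap that should be flagged.

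Your main line of argument realises $W^{(n,m)}_{\ee_1,\ee_2}$ and $W^{(n,m')}_{-\ee_1,\ee_2}$ as the submodules $V^{(n,m)}_{\ee_1,\ee_2}$ and $V^{(n,m')}_{-\ee_1,\ee_2}$ of one and the same $W^n(b)$. But by Proposition~\ref{prop:moduleV} these two submodules are supported on \emph{disjoint} height ranges (one on $h_n\ge m+1$, the other on $h_n\le -m'-1$, or vice versa). Two submodules of a common ambient module, with trivial intersection, do not by themselves produce a non-zero map from one to the other; your phrase ``richer chain of submodules'' does not bridge this, and you never exhibit the subquotient you promise. More seriously, the path basis and the isomorphism of Theorem~\ref{thm:isomorphism} are only asserted for \emph{generic} parameters; the very hypothesis $w_1\in\Z$ (and, when relevant, $q^{2\ell}=1$) puts you outside that regime, so the denominators in \eqref{eq:r(u)k(u)} may vanish and the identifications $W\cong V$ are no longer available without further argument.

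Your alternative ``equivalently'' paragraph, sending a generating half-diagram to an explicit linear combination in the target, is in fact the approach of \cite{mgp3}. However, the entire difficulty of that argument lies in writing down the correct coefficients and verifying that the image is annihilated by the relations of the source module; you defer this to ``$q$-integer identities'' by analogy with Proposition~\ref{prop:moduleV}, which is a different calculation in a different module. As written, neither route in your proposal constitutes a proof.
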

\begin{thm}[{\cite[Theorems \ref{thm:hom2glob1} and
    \ref{thm:hom2glob3}]{mgp3}}]\label{thm:w2hom}
  Let $q$ be a primitive $2\ell$-th root of unity and
  $w_2\in\Z$. Suppose for $r\in\Z$ that
  $m>m-2(\ee_2w_2+r\ell)>0$. Then there exists a non-zero homomorphism
  \[\psi:\W{n,m}{\ee_1,\ee_2}\longrightarrow \W{n,m-2(\ee_2w_2+r\ell)}{\ee_1,-\ee_2}.\]
  If $q$ is not a root of unity then set $\ell=0$ in the above.
\end{thm}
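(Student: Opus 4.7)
The cleanest approach is to reduce this to Theorem~\ref{thm:w1hom} by exploiting the left--right symmetry of $\bnx$. Horizontally reflecting every diagram induces an algebra automorphism $\sigma$ of $\bnx$ that sends $e_i\mapsto e_{n-i}$; in particular it swaps the generators $e=e_0$ and $f=e_n$, and hence exchanges the left and right blob subalgebras. On parameters this reflection is the isomorphism that exchanges $\dl\leftrightarrow\dr$, $\kl\leftrightarrow\kr$ (and fixes $\dd,\kk$), which in the DN/GMP parametrisation of Table~\ref{tab:repara} corresponds exactly to swapping $w_1\leftrightarrow w_2$.

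I would then check the effect of $\sigma$ on cell modules. Since $\sigma$ interchanges the roles of the $0$- and $1$-walls, reflecting the half-diagram basis $\WWB$ of $\we{n,m}{\ee_1,\ee_2}$ gives the half-diagram basis of $\we{n,m}{\ee_2,\ee_1}$ in the $\sigma$-twisted algebra. Concretely: if $M$ is a $\bnx(w_1,w_2)$-module then the $\sigma$-twist $M^\sigma$ is a $\bnx(w_2,w_1)$-module, and a short check using the half-diagram description of \S\ref{sect:cell} gives
\[
\we{n,m}{\ee_1,\ee_2}(w_1,w_2)^{\sigma}\;\cong\;\we{n,m}{\ee_2,\ee_1}(w_2,w_1).
\]
Now apply Theorem~\ref{thm:w1hom} to the cell module $\we{n,m}{\ee_2,\ee_1}(w_2,w_1)$ in the algebra with parameters $(w_2,w_1)$ and with $w_2\in\Z$ playing the role of the left parameter. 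For $r\in\Z$ satisfying the hypothesis this yields a non-zero homomorphism
\[
\we{n,m}{\ee_2,\ee_1}(w_2,w_1)\;\longrightarrow\;\we{n,m-2(\ee_2 w_2+r\ell)}{-\ee_2,\ee_1}(w_2,w_1).
\]
Pulling this back through $\sigma$ produces the desired $\bnx(w_1,w_2)$-map $\psi:\we{n,m}{\ee_1,\ee_2}\to\we{n,m-2(\ee_2 w_2+r\ell)}{\ee_1,-\ee_2}$. Non-vanishing is preserved because $\sigma$ is an invertible functor on the module category.

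The main obstacle, as usual for this algebra, is bookkeeping rather than structural. Two points need care. First, one must verify the claimed compatibility of $\sigma$ with the DN parameter $b$ (equivalently $\theta$) used in the bottom cell module $W^n(b)$, since $b$ depends symmetrically on $w_1,w_2$ via (\ref{eq:b}) and the sign conventions differ for $n$ even versus odd; this affects only the $|m|=0$ fringe and one checks the cases directly. Second, the hypothesis $m>m-2(\ee_2 w_2+r\ell)>0$ together with the parities imposed by Figure~\ref{fig:cmwl}(b) must be checked to guarantee that the target weight lies in $\Lambda_n^+$ for the relevant parity; this is the reason the quoted result in \cite{mgp3} splits into two theorems (\ref{thm:hom2glob1} and \ref{thm:hom2glob3}), handling the $\ee_2=+1$ and $\ee_2=-1$ cases. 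Both reduce under $\sigma$ to the corresponding cases of Theorem~\ref{thm:w1hom}, so no genuinely new construction is required, but the sign conventions and parity restrictions must be tracked through the reflection.
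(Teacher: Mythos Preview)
The paper does not give its own proof of this statement: Theorem~\ref{thm:w2hom} is simply recalled from \cite{mgp3} (as are Theorems~\ref{thm:qhom}, \ref{thm:w1hom}, \ref{thm:w1w2hom}), so there is nothing in the present paper to compare your argument against directly.

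That said, your symmetry reduction is correct and is the natural way to obtain the $w_2$ statement from the $w_1$ one. The horizontal reflection $e_i\mapsto e_{n-i}$ is an algebra isomorphism $b_n^x(\dd,\dl,\dr,\kl,\kr,\kk)\to b_n^x(\dd,\dr,\dl,\kr,\kl,\kk)$, which in the exponent parametrisation swaps $w_1\leftrightarrow w_2$; on half-diagrams it interchanges left and right exposure and hence sends $\we{n,m}{\ee_1,\ee_2}$ to $\we{n,m}{\ee_2,\ee_1}$. The paper itself invokes precisely this symmetry in \S\ref{sec:w1andw2} (``we can swap blobs and flip diagrams horizontally''), so your derivation is entirely in keeping with the methods of the paper.

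One minor point: your guess that the split into Theorems~\ref{thm:hom2glob1} and~\ref{thm:hom2glob3} in \cite{mgp3} corresponds to $\ee_2=\pm1$ is speculative and probably not the reason. Theorem~\ref{thm:w1hom} cites only a single result from \cite{mgp3}, so the two-theorem citation for the $w_2$ case more plausibly reflects the asymmetric roles of the globalisation functors $G,G'$ in Proposition~\ref{prop:glob} (one shifts $w_1$, the other $w_2$) rather than a case split on $\ee_2$. This is cosmetic and does not affect the validity of your reduction.
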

\begin{thm}[{\cite[Theorems \ref{thm:hom3glob1}  and
    \ref{thm:hom3glob2}]{mgp3}}]\label{thm:w1w2hom}
  Let $q$ be a primitive $2\ell$-th root of unity and
  $\ee_1w_1+\ee_2w_2\in\Z$. Suppose for $r\in\Z$ that
  $m>2(\ee_1w_1+\ee_2w_2+r\ell)-m\geq0$ (with equality only if
  $\ee_1=\ee_2=1$). Then there exists a non-zero homomorphism
  \[\psi:\W{n,m}{\ee_1,\ee_2}\longrightarrow \W{n,2(\ee_1w_1+\ee_2w_2+r\ell)-m}{\ee_1,\ee_2}.\]
If $q$ is not a root of unity then set $\ell=0$ in the above.
\end{thm}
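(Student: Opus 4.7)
The plan is to construct $\psi$ by specifying the image of a cyclic generator in the path realisation of \S\ref{sec:dnpath}, then to verify non-vanishing by restriction to the blob subalgebra. As a sanity check I would first verify that the central-character condition of Corollary~\ref{th:cellblockH} holds. Setting $t=2(\ee_1w_1+\ee_2w_2+r\ell)-m$ and $u=-m+\ee_1w_1+\ee_2w_2$, a short calculation gives $-t+\ee_1w_1+\ee_2w_2=-u-2r\ell$. Using $[-x]=-[x]$ together with $q^{2\ell}=1$, which forces $[x+2r\ell]=[x]$, the ratio $[2u]/[u]$ appearing in the formula for $\alpha^{(n,\cdot)}_{\ee_1,\ee_2}$ of Theorem~\ref{thm:centralelement} is invariant under $u\mapsto -u-2r\ell$, so $\alpha^{(n,m)}_{\ee_1,\ee_2}=\alpha^{(n,t)}_{\ee_1,\ee_2}$. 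This identifies the correct target weight and confirms that $Z_n$ acts identically on source and target, so the statement is compatible with the necessary block condition.

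For the construction itself I would work in the path realisation of Theorem~\ref{thm:isomorphism}, embedding both $W^{(n,m)}_{\ee_1,\ee_2}$ and $W^{(n,t)}_{\ee_1,\ee_2}$ as submodules $V^{(n,m)}_{\ee_1,\ee_2}$ and $V^{(n,t)}_{\ee_1,\ee_2}$ of $W^n(b)$ for suitably chosen values of $\theta$. Following the strategy used in the proof of Theorem~\ref{thm:isomorphism}, I would identify a generating path vector $v$ of $V^{(n,t)}_{\ee_1,\ee_2}$ and check that its annihilator in $\bnx$ contains that of the cyclic diagram generator of $W^{(n,m)}_{\ee_1,\ee_2}$ appropriate to the sign pair $(\ee_1,\ee_2)$, namely one of $d_{\pm(m+1)}$ or $d'_{\pm(m+1)}$; mapping one to the other then extends uniquely to a $\bnx$-linear map. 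The action formulas of Theorem~\ref{thm:pathaction} and the explicit expressions for $r(u)$ and $k(u)$ in (\ref{eq:r(u)k(u)}) reduce all the required annihilations to the vanishing of specific quantum integers at prescribed heights, and the hypotheses $\ee_1w_1+\ee_2w_2\in\Z$ together with $q^{2\ell}=1$ are exactly what is needed to produce these vanishings. Globalisation via Proposition~\ref{prop:glob} lets me work first with $n$ much larger than $m$, pushing boundary effects away from the propagating pattern, and then localise back via $F\circ G=\id$ (resp.\ $F'\circ G'=\id$) to recover the claim for the original $n$.

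The main obstacle is the verification that $\psi$ is non-zero. The candidate map sends a cyclic generator to $v$, but a priori $v$ could lie in a proper submodule of $V^{(n,t)}_{\ee_1,\ee_2}$, or map to zero under the identification $V^{(n,t)}_{\ee_1,\ee_2}\cong W^{(n,t)}_{\ee_1,\ee_2}$. To address this I would restrict to the left (or right) blob subalgebra and invoke Table~\ref{tab:filtration}: both $W^{(n,m)}_{\ee_1,\ee_2}$ and $W^{(n,t)}_{\ee_1,\ee_2}$ inherit filtrations by blob standards $W_{\pm s}(n)$, and $\psi$ restricts to a filtered morphism. It then suffices to exhibit a single filtration quotient on which the induced map coincides, up to a non-zero scalar, with a known non-vanishing blob-algebra homomorphism in the sense of \cite{martsaleur,blobcgm}. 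The bookkeeping of which quotient survives under the inequality $m>2(\ee_1w_1+\ee_2w_2+r\ell)-m\geq 0$, and of how the $(\ee_1,\ee_2)$ labelling matches across left- and right-blob restrictions, is the delicate part of the argument and is what prevents this from being a purely formal consequence of the central-character calculation.
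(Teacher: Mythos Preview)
The paper does not give its own proof of this statement: it is imported verbatim from \cite[Theorems~4.2.11 and 4.2.12]{mgp3}, where the homomorphisms are constructed by explicit diagram calculus (the sequence of Lemmas~4.2.1--4.2.8 in \cite{mgp3} shows that certain products of diagram elements vanish, which is what makes the map well-defined and non-zero). Your path-basis strategy is therefore a genuinely different route, not a reconstruction of the paper's argument.

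There is, however, a real obstruction to your plan as written. Both the path basis $\Pi_n$ and the isomorphism of Theorem~\ref{thm:isomorphism} are only asserted for \emph{generic} $\ddd$: the operators $X_i$ involve $r(u)=[u+1]/[u]$ and $k(u)$, whose denominators are quantum integers in $w_1$ and $w_2$, and the proof of Theorem~\ref{thm:isomorphism} explicitly carries the caveat ``generic''. The homomorphism you want exists precisely at special parameter values, and nothing in \S\ref{sec:dnpath} guarantees that the submodules $V^{(n,m)}_{\ee_1,\ee_2}$ or the isomorphism $W^{(n,m)}_{\ee_1,\ee_2}\cong V^{(n,m)}_{\ee_1,\ee_2}$ survive the specialisation. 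One can sometimes rescue such arguments by working over a localisation of the integral ring and then specialising, but you have not set this up, and the denominator control needed is not addressed.

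There is a second, more structural gap. To show that $d\mapsto v$ extends to a module map you must verify $\mathrm{Ann}(d)\subset\mathrm{Ann}(v)$ as left ideals, not merely that those individual $e_i$ which kill $d$ also kill $v$. In the proof of Theorem~\ref{thm:isomorphism} this shortcut works because source and target have the same dimension and the surjectivity argument closes the loop; here the target is strictly smaller and the full annihilator check (or an equivalent presentation of $W^{(n,m)}_{\ee_1,\ee_2}$ by generators and relations) cannot be bypassed. This is exactly the content supplied in \cite{mgp3} by the chain of diagram lemmas, and it is the substantive part of the proof that your sketch does not replace.
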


\subsection{Block master equations}

By Proposition \ref{pr:dn210}
 a necessary condition for any two cell modules to be
in the same block is that $Z_n$ acts by the same constant on both
modules. Notice that
\begin{align*}
  \alpha^{(n,m)}_{\ee_1,\ee_2}&=[n]\frac{[2(-m+\ee_1w_1+\ee_2w_2)]}{[-m+\ee_1w_1+\ee_2w_2]}\\
  &=[n]\left(q^{-m+\ee_1w_1+\ee_2w_2}+q^{m-\ee_1w_1-\ee_2w_2}\right)
\end{align*}
Therefore if $\alpha^{(n,m)}_{\ee_1,\ee_2}=\alpha^{(n,t)}_{\eta_1,\eta_2}$ and $[n]\neq0$, then
\[q^{-m+\ee_1w_1+\ee_2w_2}+q^{m-\ee_1w_1-\ee_2w_2}=q^{-t+\eta_1w_1+\eta_2w_2}+q^{t-\eta_1w_1-\eta_2w_2}.\]
Thus we have
$q^{-m+\ee_1w_1+\ee_2w_2}=q^{\pm(-t+\eta_1w_1+\eta_2w_2)}$.
This can only be satisfied if either
\begin{align*}
  -(m-t)+(\ee_1-\eta_1)w_1+(\ee_2-\eta_2)w_2&\equiv0\text{ (mod }2\ell),\text{ or}\\
  -(m+t)+(\ee_1+\eta_1)w_1+(\ee_2+\eta_2)w_2&\equiv0\text{ (mod }2\ell).
\end{align*}
In the first case, the allowed values of $\eta_1,\eta_2$ lead to the following possibilities:
\begin{align}
  \ee_1\neq\eta_1,\ee_2\neq\eta_2&\implies m-t\equiv2\ee_1w_1+2\ee_2w_2\text{ (mod }2\ell)\label{eq:w1w2neg}\\
  \ee_1\neq\eta_1,\ee_2=\eta_2&\implies m-t\equiv2\ee_1w_1\text{ (mod }2\ell)\label{eq:w1neg}\\
  \ee_1=\eta_1,\ee_2\neq\eta_2&\implies m-t\equiv2\ee_2w_2\text{ (mod }2\ell)\label{eq:w2neg}\\
  \ee_1=\eta_1,\ee_2=\eta_2&\implies m-t\equiv0\text{ (mod }2\ell),\label{eq:trivial}
\end{align}
and in the second case we have:
\begin{align}
  \ee_1=\eta_1,\ee_2=\eta_2&\implies m+t\equiv2\ee_1w_1+2\ee_2w_2\text{ (mod }2\ell)\label{eq:w1w2pos}\\
  \ee_1=\eta_1,\ee_2\neq\eta_2&\implies m+t\equiv2\ee_1w_1\text{ (mod }2\ell)\label{eq:w1pos}\\
  \ee_1\neq\eta_1,\ee_2=\eta_2&\implies m+t\equiv2\ee_2w_2\text{ (mod }2\ell)\label{eq:w2pos}\\
  \ee_1\neq\eta_1,\ee_2\neq\eta_2&\implies m+t\equiv0\text{ (mod }2\ell).\label{eq:impossible}
\end{align}
If $q$ is not a root of unity, then all of the congruences modulo
$2\ell$ in the above become equalities.

If $[n]=0$, then we can still use equations \eqref{eq:w1w2neg}--\eqref{eq:impossible} by first globalising to $b^x_N$ where $[N]\neq0$, determining the blocks there, and then localising again.

\section{Decomposition matrices and blocks of $\bnp$}\label{sec:decompblocks}
In the following subsections we will consider separately various cases
relating to whether or not certain linear combinations of $w_1$ and
$w_2$ are integers.

To visualise solutions to the master equations 
(\ref{eq:w1w2neg}-\ref{eq:impossible}),
we will plot points in the
plane corresponding to cell modules,
in such a way that solutions are manifested geometrically.
(Remark: this indicates the potential for a
geometric linkage principle, cf. \cite{jantz},
to describe the representation theory of the algebra.)
The cell module $\WW$ is given `weight' coordinates
$$
\WW \mapsto \big(\ee_1(m-\ee_1w_1-\ee_2w_2),\ee_2(m-\ee_1w_1-\ee_2w_2)\big) ,
$$
--- see e.g. Figure \ref{fig:cgl10}, Figure \ref{fig:w1orw2}.
In this geometry  
(in the $q$ not a root of unity case) two cell modules have the
same $Z_n$-eigenvalue if and only if one can be reached from the other
by successive reflections in the coordinate axes.
As a guide to the eye, the cell module
$\WW$ is plotted on the `arm' labelled by $\ee_1,\ee_2$.

\begin{figure}
\centering
  \begin{tikzpicture}[scale=0.3,>=latex] 
      \pgfmathsetmacro{\w}{1/2}
      \pgfmathsetmacro{\ww}{3/4}
      \draw (0,-8)--(0,8);
      \draw (-8,0)--(8,0);
      \foreach \m in {1,3,5,7}
      \foreach \e in {-1,1}
      \foreach \f in {-1,1}
      {       
        \draw (\e * \m - \e * \e * \w - \e * \f * \ww,\f * \m - \f * \e * \w - \f * \f * \ww)  node {$\bullet$};
      }
      \draw[white,fill=white] (-1 - \w - \ww, -1 - \w - \ww) circle (10pt);

      \draw (1 - \w - \ww, 1 - \w - \ww) -- ++(6,6) node[anchor=south west] {$+,+$};
      \draw (-1 - \w + \ww, 1 + \w - \ww) -- ++(-6,6) node[anchor=south east] {$-,+$};
      \draw (1 - \w + \ww, -1 + \w - \ww) -- ++(6,-6) node[anchor=north west] {$+,-$};
      \draw (-3 - \w - \ww, -3 - \w - \ww) -- ++(-4,-4) node[anchor=north east] {$-,-$};
    \end{tikzpicture}
\caption{Graphical depiction of the cell modules of $b_8'$ with
  $w_1=\half$ and $w_2=\frac{3}{4}$.
}
\label{fig:cgl10}
\end{figure}

\subsection{Cases with none of $w_1,w_2,w_1+w_2,w_1-w_2$ integral}\label{sec:noneint}

Suppose first that $q$ is not a root of unity. Since $m$ and $t$ are
positive integers, it is only possible for at most one of
\eqref{eq:w1w2neg} and \eqref{eq:w1w2pos} to be satisfied (similarly
for \eqref{eq:w1neg} and \eqref{eq:w1pos}; \eqref{eq:w2neg} and
\eqref{eq:w2pos}; and \eqref{eq:trivial} and \eqref{eq:impossible}).
The case \eqref{eq:impossible} is impossible as both $m$ and $t$ are
positive integers, and at most one can be zero. The case
\eqref{eq:trivial} is trivial, as the two modules are equal here. Also
since $\ee_1$ and $\ee_2$ take values $\pm1$, we can have non-trivial
coincidences of the eigenvalues of $Z_n$ if and only if
$\{w_1,w_2,w_1+w_2,w_1-w_2\}\,\cap\,\Z\neq\emptyset$. This leads to
the first main theorem of this paper:
\begin{thm}\label{thm:semisimple}
  Suppose $q$ is not a root of unity and
  $\{w_1,w_2,w_1+w_2,w_1-w_2\}\,\cap\,\Z=\emptyset$. Then the
algebra $b'_n$ is semisimple. If in addition, $\theta \ne \pm( -m \pm
w_1 \pm w_2)$ for any $m \in \Z$ then 
  symplectic blob algebra $b^x_n$ is semisimple.
\end{thm}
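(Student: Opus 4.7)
The plan is to deduce semisimplicity from non-vanishing of every cell module Gram determinant, via Theorem~\ref{thm:gram} together with the formula (\ref{eq:b}). Recall that a cellular algebra is semisimple if and only if each of its cell modules has non-zero Gram determinant, so it suffices to check this for every label in $\Lambda_n^+$.

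For $b_n'$, whose cell modules are the $W^{(n,m)}_{\varepsilon_1,\varepsilon_2}$ with $m>0$, I would first observe that $[w_1+1]$ and $[w_2+1]$ are non-zero (since $q$ is not a root of unity and $w_1,w_2\notin\Z$), so the inverse-power prefactors and the powers of $\delta_L,\delta_R$ in Theorem~\ref{thm:gram} are well-defined and non-zero. The remaining factors are quantum integers of the four shapes $[\tfrac{1}{2}(n-m-2k-1)]$, $[\varepsilon_1 w_1 + \tfrac{1}{2}(n-m-2k-1)]$, $[\varepsilon_2 w_2 + \tfrac{1}{2}(n-m-2k-1)]$, and $[\varepsilon_1 w_1 + \varepsilon_2 w_2 - \tfrac{1}{2}(n+m-2k-1)]$. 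After tracking the parity constraint imposed by $\Lambda_n^+$ on $(n,m,\varepsilon_1,\varepsilon_2)$ (which, as illustrated by the example following Theorem~\ref{thm:gram}, makes each shift integral), the arguments take the form (integer), ($\varepsilon_1 w_1 +$ integer), ($\varepsilon_2 w_2 +$ integer), and ($\varepsilon_1 w_1 + \varepsilon_2 w_2 +$ integer). Because $q$ is not a root of unity, $[x]=0$ forces $x=0$; the hypothesis $\{w_1,w_2,w_1+w_2,w_1-w_2\}\cap\Z=\emptyset$, combined with the fact that $k$ runs over values making $\tfrac{1}{2}(n-m-2k-1)$ a positive integer, then ensures that no quantum integer factor vanishes. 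Hence $\Gamma^{(n,m)}_{\varepsilon_1,\varepsilon_2}\ne 0$ for every cell module of $b_n'$, proving semisimplicity.

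The full algebra $b_n^x$ has one additional cell module, $W^n(b)$. Its Gram determinant contains the same quantum integer factors as above (already known to be non-zero) plus further factors arising from the dependence on $b$; by (\ref{eq:b}) these are quantum integers with arguments $(w_1\pm w_2\pm\theta+1)/2$ for $n$ even, or $(w_1-w_2\pm\theta)/2$ for $n$ odd. Each such factor vanishes exactly when $\theta$ equals an expression of the form $\pm(-m\pm w_1\pm w_2)$ with $m\in\Z$, which is precisely the set excluded by the additional hypothesis. So $W^n(b)$ also has non-vanishing Gram determinant and $b_n^x$ is semisimple.

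The main technical care is the parity bookkeeping in the middle step: one must verify that, with the parity restrictions built into $\Lambda_n^+$, the arguments of the $w_i$-dependent quantum integers in Theorem~\ref{thm:gram} really are integer shifts (not half-integer shifts) of $w_1$, $w_2$, or $w_1\pm w_2$. A parallel route would combine Corollary~\ref{th:cellblockH} with the master equations (\ref{eq:w1w2neg})--(\ref{eq:impossible}), already analysed just before the theorem, to conclude that each cell module lies in its own block, and then invoke the quasi-hereditary structure of $b_n^x$ in this regime to obtain that each cell module coincides with its simple head; this repackages essentially the same parity analysis into the master equations.
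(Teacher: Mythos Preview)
Your approach via Gram determinants is correct and gives a valid alternative to the paper's argument. For $b_n'$ the paper proceeds differently: it invokes Corollary~\ref{th:cellblockH}, observing that under the hypotheses the master equations \eqref{eq:w1w2neg}--\eqref{eq:impossible} have only the trivial solution, so the $Z_n$-eigenvalues on distinct cell modules are pairwise distinct and every cell module lies alone in its block; semisimplicity then follows. Your route instead applies Theorem~\ref{thm:gram} together with the cellular semisimplicity criterion directly. The parity check you flag is exactly the point: for every label in $\Lambda_n^+$ one has $n-m$ odd (this comes out of the dictionary in \S\ref{sec:dnlabel}), so the shifts $\tfrac12(n-m-2k-1)$ and $\tfrac12(n+m-2k-1)$ in Theorem~\ref{thm:gram} are genuine integers and none of the quantum-integer factors can vanish under the stated hypotheses. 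Your argument is arguably more self-contained (one uniform criterion), while the paper's block argument is what feeds directly into the non-semisimple analysis that follows --- which is why the paper prefers it. You in fact note this alternative yourself in your final paragraph.

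One inaccuracy to fix: your description of $\Gamma^n_b$ is not right. The Gram determinant of $W^n(b)$ does not contain ``the same quantum integer factors as above plus further factors from $b$''; it is a separate formula (the De~Gier--Nichols result quoted at the start of \S\ref{sec:wnb}), and \emph{all} of its nontrivial factors involve $\theta$, having the shape $[(c+\ee_1 w_1+\ee_2 w_2+\ee_3\theta)/2]$ for integers $c$ determined by $n$. Equation~\eqref{eq:b} only records the reparametrisation $b\leftrightarrow\theta$, not the determinant. The conclusion is unaffected --- these factors vanish precisely when $\theta=\pm(-m\pm w_1\pm w_2)$ for some $m\in\Z$, which the extra hypothesis excludes --- and this is exactly how the paper handles the second statement as well.
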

\begin{proof}
To prove the first statement, it suffices to show that the eigenvalues
of $Z_n$ are all distinct. Indeed, since none of $w_1,w_2,w_1+w_2$ or
$w_1-w_2$ are integral the only possible solution to equations
\eqref{eq:w1w2neg}--\eqref{eq:impossible} is the trivial one in
\eqref{eq:trivial}. Therefore each cell module is alone in its block
and the algebra is semisimple.

To prove the second, the only additional information needed is that
$\Wb$ is simple. This is guaranteed as for our chosen value of
$\theta$, the Gram determinant of $\Wb$ is non-zero by
\cite[Theorem 5.17]{degiernichols}.
\end{proof}

\newcommand{\qellru}{$q^{2\ell} =1$}

If now $q$ is a $2\ell$-th root of unity, then we must consider
equations \eqref{eq:trivial} and \eqref{eq:impossible}. 
Note that the left- and right-blob algebras are semisimple, so if
$\eta_1\neq\ee_1$ and $\eta_2\neq\ee_2$ then by restricting to either
algebra and considering the standard contents of Table
\ref{tab:filtration} we see that there can be no homomorphisms between
any modules satisfying \eqref{eq:impossible}. It therefore remains to
consider \eqref{eq:trivial}. Suppose without loss of generality that
$t<m$. 
Then $0\leq m-2\ell$ (with equality only if $\ee_1=\ee_2=1$), so by
Theorem \ref{thm:qhom}, we have a non-zero homomorphism
\begin{equation} \label{eq:ww1}
\W{n,m}{\ee_1,\ee_2}\longrightarrow \W{n,m-2\ell}{\ee_1,\ee_2}
\end{equation}
Thus $\W{n,m}{\ee_1,\ee_2}$ and
$\W{n,m-2\ell}{\ee_1,\ee_2}$ are in the same block. 
Similarly we have $\W{n,m-2\ell}{\ee_1,\ee_2}$ and
$\W{n,m-4\ell}{\ee_1,\ee_2}$ in the same block, and so on. 
By transitivity, we therefore see that $\W{n,m}{\ee_1,\ee_2}$ and
$\W{n,t}{\ee_1,\ee_2}$ are in  the same block.  

\begin{thm}\label{thm:qrootofunity}
Suppose \qellru\  
and $\{w_1,w_2,w_1+w_2,w_1-w_2\}\,\cap\,\Z=\emptyset$. 
Then two cell modules $\W{n,m}{\ee_1,\ee_2}$ and
$\W{n,t}{\eta_1,\eta_2}$ are in the same block if and only if
$\ee_1=\eta_1$, $\ee_2=\eta_2$ and $m\equiv t$ (mod $2\ell$). 
\end{thm}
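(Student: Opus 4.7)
The plan is to establish the biconditional by combining the $Z_n$-eigenvalue obstruction from Corollary~\ref{th:cellblockH} (for necessity) with the standard-module homomorphisms of Theorem~\ref{thm:qhom} (for sufficiency).

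For necessity, I would argue as follows. By Corollary~\ref{th:cellblockH}, same-block implies $\alpha^{(n,m)}_{\ee_1,\ee_2}=\alpha^{(n,t)}_{\eta_1,\eta_2}$, and hence under $q^{2\ell}=1$ one of the congruences (\ref{eq:w1w2neg})--(\ref{eq:impossible}) must hold. Under the hypothesis $\{w_1,w_2,w_1+w_2,w_1-w_2\}\cap\Z=\emptyset$, the right-hand sides of (\ref{eq:w1w2neg}), (\ref{eq:w1neg}), (\ref{eq:w2neg}), (\ref{eq:w1w2pos}), (\ref{eq:w1pos}), (\ref{eq:w2pos}) involve genuinely non-integral multiples of $w_1$ or $w_2$ (viewed modulo $2\ell$), while the left-hand sides are integers, so none of these can hold. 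Only (\ref{eq:trivial}) and (\ref{eq:impossible}) survive, and (\ref{eq:trivial}) is exactly the target conclusion $\ee_1=\eta_1$, $\ee_2=\eta_2$, $m\equiv t\pmod{2\ell}$.

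To dispose of (\ref{eq:impossible}), where $\ee_1\neq\eta_1$ and $\ee_2\neq\eta_2$, I restrict to the left blob subalgebra $b_n$. Since the Gram determinants of $b_n$'s standard modules are, up to units, products of quantum integers of the form $[a\pm w_1]$, the non-integrality of $w_1$ (i.e.~$Q_1\neq q^{n}$ for any $n\in\Z$, even at a root of unity) ensures that these are non-zero, so $b_n$ is semisimple. Table~\ref{tab:filtration} then shows that $W^{(n,m)}_{+,\ee_2}\big|_{b_n}$ is a direct sum of simples $W_{j}(n)$ with $j>0$, while $W^{(n,t)}_{-,\eta_2}\big|_{b_n}$ is a direct sum of simples $W_{-j}(n)$ with $j>0$. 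These composition-factor sets are disjoint. Any $\bnx$-composition factor common to the two cell modules would remain a common $b_n$-composition factor after restriction, so no such factor exists and the two modules lie in distinct $\bnx$-blocks. The argument with the right blob covers the symmetric case.

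For sufficiency, assume $\ee_1=\eta_1$, $\ee_2=\eta_2$ and $m\equiv t\pmod{2\ell}$. Without loss of generality $m>t$, so $m=t+2k\ell$ for some positive integer $k$. I would apply Theorem~\ref{thm:qhom} iteratively, checking the constraint $m'-2\ell\geq 0$ at each of the $k$ stages (with equality at the final step only when $t=0$ and $\ee_1=\ee_2=+1$, which is the permitted boundary case), to produce a chain of non-zero homomorphisms
\[
W^{(n,m)}_{\ee_1,\ee_2}\to W^{(n,m-2\ell)}_{\ee_1,\ee_2}\to\cdots\to W^{(n,t)}_{\ee_1,\ee_2}.
\]
Each link forces the two consecutive cell modules to share a block, so transitivity gives the conclusion.

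The main obstacle is the semisimplicity input used to rule out (\ref{eq:impossible}): one must check that the blob-algebra Gram determinants really do involve only quantum integers of the form $[a\pm w_1]$ (and no bare $[a]$ factors with $a\in\ell\Z$), so that $w_1\notin\Z$ alone suffices even when $q$ is a root of unity. Once this is granted, the rest of the argument is straightforward bookkeeping with the master equations and repeated application of the homomorphism theorem.
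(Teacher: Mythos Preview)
Your proposal follows essentially the same route as the paper: use the $Z_n$-eigenvalue to reduce to cases \eqref{eq:trivial} and \eqref{eq:impossible}, dispose of \eqref{eq:impossible} by restriction to the blob subalgebra and Table~\ref{tab:filtration}, and handle \eqref{eq:trivial} by chaining the homomorphisms of Theorem~\ref{thm:qhom}. The paper phrases the \eqref{eq:impossible} step as ``no homomorphisms'' (and then implicitly invokes the Appendix identifying blocks with connected components of the $\Hom$-quiver on standards), whereas you phrase it via composition factors; both work, though your sentence ``so no such factor exists and the two modules lie in distinct $\bnx$-blocks'' skips a beat: absence of a common composition factor between one pair does not by itself give distinct blocks. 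What you actually need (and what your restriction argument in fact yields) is that \emph{every} simple appearing in some $W^{(n,*)}_{+,*}$ restricts to positive-label blob simples only, hence can never appear in any $W^{(n,*)}_{-,*}$, so the two families are genuinely block-separated.

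On your flagged obstacle: the paper simply asserts that the left and right blob algebras are semisimple here, and this is indeed correct even at a root of unity. The blob-algebra Gram determinants (see \cite{marwood00}) are products of factors $[w_1+j]$ with $j\in\Z$ and contain no bare $[a]$ factors; one can check this by hand in low rank, and the point is that the relation $e_1e_0e_1=\kl e_1$ prevents $b_n/\langle e_0\rangle$ from being $TL_n$ when $\kl$ is a unit, so non-semisimplicity of $TL_n$ does not propagate upward. Since $[w_1+j]=0$ forces $Q_1^2=q^{-2j}$ and hence $w_1\in\Z$, the hypothesis $w_1\notin\Z$ makes all these factors nonzero, giving semisimplicity of $b_n$ (and symmetrically for $w_2$ and the right blob). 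With that input, your argument is complete.
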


\begin{figure}
\centering
  \begin{tikzpicture}[scale=0.3,>=latex] 
      \pgfmathsetmacro{\w}{1/2}
      \pgfmathsetmacro{\ww}{3/4}
      \draw (0,-8)--(0,8);
      \draw (-8,0)--(8,0);
      \foreach \m in {1,3,5,7,9,11,13}
      \foreach \e in {-1,1}
      \foreach \f in {-1,1}
      {       
        \draw (\e * \m - \e * \e * \w - \e * \f * \ww,\f * \m - \f * \e * \w - \f * \f * \ww)  node {$\bullet$};
      }
      \draw[white,fill=white] (-1 - \w - \ww, -1 - \w - \ww) circle (10pt);

      \draw[dashed,->] (7 - \w - \ww, 7 - \w - \ww) to[out=-90,in=0] (1 - \w - \ww, 1 - \w - \ww);
      \draw[dashed,->] (9 - \w - \ww, 9 - \w - \ww) to[out=-90,in=0] (3 - \w - \ww, 3 - \w - \ww);
      \draw[dashed,->] (11 - \w - \ww, 11 - \w - \ww) to[out=-90,in=0] (5 - \w - \ww, 5 - \w - \ww);
      \draw[dashed,->] (13 - \w - \ww, 13 - \w - \ww) to[out=-90,in=0] (7 - \w - \ww, 7 - \w - \ww);

      \draw[dashed,->] (-7 - \w + \ww, 7 + \w - \ww) to[out=-90,in=180] (-1 - \w + \ww, 1 + \w - \ww);
      \draw[dashed,->] (-9 - \w + \ww, 9 + \w - \ww) to[out=-90,in=180] (-3 - \w + \ww, 3 + \w - \ww);
      \draw[dashed,->] (-11 - \w + \ww, 11 + \w - \ww) to[out=-90,in=180] (-5 - \w + \ww, 5 + \w - \ww);
      \draw[dashed,->] (-13 - \w + \ww, 13 + \w - \ww) to[out=-90,in=180] (-7 - \w + \ww, 7 + \w - \ww);

      \draw[dashed,->] (7 - \w + \ww, -7 + \w - \ww) to[out=180,in=-90] (1 - \w + \ww, -1 + \w - \ww);
      \draw[dashed,->] (9 - \w + \ww, -9 + \w - \ww) to[out=180,in=-90] (3 - \w + \ww, -3 + \w - \ww);
      \draw[dashed,->] (11 - \w + \ww, -11 + \w - \ww) to[out=180,in=-90] (5 - \w + \ww, -5 + \w - \ww);
      \draw[dashed,->] (13 - \w + \ww, -13 + \w - \ww) to[out=180,in=-90] (7 - \w + \ww, -7 + \w - \ww);

      \draw[dashed,->] (-9 - \w - \ww, -9 - \w - \ww)
      to[out=0,in=-90] (-3 - \w - \ww, -3 - \w - \ww);
      \draw[dashed,->] (-11 - \w - \ww, -11 - \w - \ww)
      to[out=0,in=-90] (-5 - \w - \ww, -5 - \w - \ww);
      \draw[dashed,->] (-13 - \w - \ww, -13 - \w - \ww)
      to[out=0,in=-90] (-7 - \w - \ww, -7 - \w - \ww);

      \draw (1 - \w - \ww, 1 - \w - \ww) -- ++(12,12)
         node[anchor=south west] {};           %{$+,+$};
      \draw (-1 - \w + \ww, 1 + \w - \ww) -- ++(-12,12)
         node[anchor=south east] {}; %{$-,+$};
      \draw (1 - \w + \ww, -1 + \w - \ww) -- ++(12,-12)
         node[anchor=north west] {}; % {$+,-$};
      \draw (-3 - \w - \ww, -3 - \w - \ww) -- ++(-10,-10)
         node[anchor=north east] {}; % {$-,-$};
    \end{tikzpicture}
  \caption{
    Cell modules of $b_{14}'$ with $w_1=\half$ and
    $w_2=\frac{3}{4}$.
    Dashed lines indicate modules in the same block when $\ell=3$.}
\label{fig:cgl1paul}
\end{figure}
\begin{figure}
\centering
  \begin{tikzpicture}[scale=0.3,>=latex] 
      \pgfmathsetmacro{\w}{1/2}
      \pgfmathsetmacro{\ww}{3/4}
      \draw (0,-8)--(0,8);
      \draw (-8,0)--(8,0);
      \foreach \m in {1,3,5,7}
      \foreach \e in {-1,1}
      \foreach \f in {-1,1}
      {       
        \draw (\e * \m - \e * \e * \w - \e * \f * \ww,\f * \m - \f * \e * \w - \f * \f * \ww)  node {$\bullet$};
      }
      \draw[white,fill=white] (-1 - \w - \ww, -1 - \w - \ww) circle (10pt);

      \draw[dashed,->] (7 - \w - \ww, 7 - \w - \ww) to[out=-90,in=0] (1 - \w - \ww, 1 - \w - \ww);
      \draw[dashed,->] (-7 - \w + \ww, 7 + \w - \ww) to[out=-90,in=180] (-1 - \w + \ww, 1 + \w - \ww);
      \draw[dashed,->] (7 - \w + \ww, -7 + \w - \ww) to[out=180,in=-90] (1 - \w + \ww, -1 + \w - \ww);

      \draw (1 - \w - \ww, 1 - \w - \ww) -- ++(6,6) node[anchor=south west] {$+,+$};
      \draw (-1 - \w + \ww, 1 + \w - \ww) -- ++(-6,6) node[anchor=south east] {$-,+$};
      \draw (1 - \w + \ww, -1 + \w - \ww) -- ++(6,-6) node[anchor=north west] {$+,-$};
      \draw (-3 - \w - \ww, -3 - \w - \ww) -- ++(-4,-4) node[anchor=north east] {$-,-$};
    \end{tikzpicture}
\caption{Graphical depiction of the cell modules of $b_8'$ with
  $w_1=\half$ and $w_2=\frac{3}{4}$. The dashed lines indicate
  modules in the same block when $\ell=3$.}
\label{fig:cgl1}
\end{figure}

The combinatorial-geometric expression of linkage in this case is as
in Figure~\ref{fig:cgl1paul}. See Figure \ref{fig:cgl1} for the
truncation to $n=8$.

\subsection{Either $w_1$ or $w_2$ integral}\label{sec:w1orw2}

We will determine the blocks when precisely one of $w_1$ and $w_2$ is
integral. We begin with the case $w_1\in\Z,w_2\not\in\Z$, and first
assume that $q$ is not a root of unity. Now the only equations from
\eqref{eq:w1w2neg}--\eqref{eq:impossible} with non-trivial solutions
are \eqref{eq:w1neg} and \eqref{eq:w1pos}, and by fixing $m, \ee_1$
and $\ee_2$ we see that blocks have size at most two. Note that if
$w_2\in\half\Z$ then we still do not obtain extra solutions since
$m\pm t$ is always even.

Consider first the case \eqref{eq:w1pos}, where we have $\ee_1=\eta_1$
and $\ee_2\neq\eta_2$. We will show that although these two modules
have the same eigenvalue, they are not in the same block. As
right-blob modules, they have a filtration as in Table
\ref{tab:filtration}. However since the parameter $w_2$ is not
integral the right-blob algebra is semisimple, and thus it is not
possible to have a non-zero homomorphism between the modules. Since
the block has size at most two, we deduce that these modules are not
in the same block.

Now consider \eqref{eq:w1neg}. Here, we have $\ee_1\neq\eta_1$ and
$\ee_2=\eta_2$. By swapping labels if necessary we may assume that
$m>t$ (equality is not possible due to \cite[Proposition
3.4.1]{mgp3}). Since both $m$ and $t$ are non-negative integers we
must have $\ee_1=\mathrm{sgn}(w_1)$, thus
$m>t=m-2\ee_1w_1>0$. Therefore the conditions of Theorem
\ref{thm:w1hom} are satisfied and we have a homomorphism
$\W{n,m}{\ee_1,\ee_2}\longrightarrow \W{n,t}{-\ee_1,\ee_2}$. 

We will now consider the case when $q$ is a $2\ell$-th root of unity
and $w_1\in\Z$, $w_2\not\in\Z$. In this case, the only equations from
\eqref{eq:w1w2neg}--\eqref{eq:impossible} with solutions are
\eqref{eq:w1neg}, \eqref{eq:trivial}, \eqref{eq:w1pos} and
\eqref{eq:impossible}. We still do not obtain extra solutions if
$w_2\in\half\Z$ by parity considerations in the same way as above.

Begin by fixing the cell module with labels $m, \ee_1$ and $\ee_2$. By
restricting to the right-blob algebra as before, any other cell module
$\W{n,t}{\eta_1,\eta_2}$ in this block has $\eta_2=\ee_2$. We can
therefore rule out equations \eqref{eq:w1pos} and
\eqref{eq:impossible}. In the case of equation \eqref{eq:w1neg} we
again see that the conditions of Theorem \ref{thm:w1hom} are satisfied
(this time with $\ell\neq0$), and so these cell modules are in the
same block. So it remains to consider the case of
\eqref{eq:trivial}. We will begin by showing that
$\W{n,m}{\ee_1,\ee_2}$ and $\W{n,m+2\ell}{\ee_1,\ee_2}$ are in
the same block, and the general result will follow. Indeed, we choose
$r\in\Z$ such that $0<\ee_1w_1+r\ell<\ell$, then by Theorem
\ref{thm:w1hom} we have non-zero homomorphisms
$\W{n,m+2\ell}{\ee_1,\ee_2}\longrightarrow
\W{n,m+2\ell-2(\ee_1w_1+r\ell)}{-\ee_1,\ee_2}$ and
$\W{n,m+2\ell-2(\ee_1w_1+r\ell)}{-\ee_1,\ee_2}\longrightarrow
\W{n,m}{\ee_1,\ee_2}$. Therefore our original pair of cell modules
are in the same block.

The proof for $w_2\in\Z$, $w_1\not\in\Z$ is similar, except we must
consider cases \eqref{eq:w2neg} and \eqref{eq:w2pos}, and use Theorem
\ref{thm:w2hom} in place of Theorem \ref{thm:w1hom}. We therefore have
the following theorem:

\begin{thm}\label{thm:w1int}
  Suppose $q$ is a $2\ell$-th root of unity and
  $w_1\in\Z$, $w_2\not\in\Z$. Then two cell modules
  $\W{n,m}{\ee_1,\ee_2}$ and $\W{n,t}{\eta_1,\eta_2}$ are in the
  same block if and only if $\ee_2=\eta_2$ and
  $|m-\ee_1w_1-\ee_2w_2|\equiv|t-\eta_1w_1-\ee_2w_2|\pmod{2\ell}$.

If now $w_2\in\Z,w_1\not\in\Z$, then two cell modules
$\W{n,m}{\ee_1,\ee_2}$ and $\W{n,t}{\eta_1,\eta_2}$ are in the
same block if and only if $\ee_1=\eta_1$ and
$|m-\ee_1w_1-\ee_2w_2|\equiv|t-\eta_1w_1-\ee_2w_2|\pmod{2\ell}$.

If $q$ is not a root of unity, then replace the above two congruences modulo $2\ell$ by equalities.
\end{thm}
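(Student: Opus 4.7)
The plan is to combine the necessary condition from Corollary~\ref{th:cellblockH} (which pins down candidate linkages via the master equations \eqref{eq:w1w2neg}--\eqref{eq:impossible}) with explicit non-zero homomorphisms produced by Theorem~\ref{thm:w1hom}. I shall treat the case $w_1\in\Z$, $w_2\notin\Z$; the case $w_2\in\Z$, $w_1\notin\Z$ is symmetric, using Theorem~\ref{thm:w2hom} in place of Theorem~\ref{thm:w1hom}.

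\emph{Necessary side.} First I would enumerate which master equations can be satisfied. Since $w_2\notin\Z$ and since $m\pm t$ is always even (which also forces the coefficient of $w_2$ in any valid congruence to vanish in the borderline case $w_2\in\tfrac12\Z$), only \eqref{eq:w1neg}, \eqref{eq:trivial}, \eqref{eq:w1pos} and \eqref{eq:impossible} admit non-trivial solutions. To eliminate \eqref{eq:w1pos} and \eqref{eq:impossible}, I restrict to the right blob subalgebra. Because $w_2\notin\Z$ this algebra is semisimple, so any $b_n^x$-homomorphism between cell modules restricts to a direct sum of right blob morphisms and must therefore preserve the right blob standard content listed in Table~\ref{tab:filtration}. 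Inspection of that table shows this content is controlled (up to the sign convention on the propagating label) by $\ee_2$, forcing $\eta_2=\ee_2$ whenever two cells lie in the same block.

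\emph{Sufficient side.} It remains to realise the surviving linkages. For \eqref{eq:w1neg}, with $\ee_1\neq\eta_1$, $\ee_2=\eta_2$ and (after relabelling so $m>t$) $m>t=m-2\ee_1w_1>0$, Theorem~\ref{thm:w1hom} furnishes a non-zero map $W^{(n,m)}_{\ee_1,\ee_2}\to W^{(n,t)}_{-\ee_1,\ee_2}$ (taking $r=0$ when $q$ is not a root of unity, and any $r\in\Z$ with the required positivity otherwise), so these modules share a block. For \eqref{eq:trivial} with $\ell\ne 0$, I would show $W^{(n,m)}_{\ee_1,\ee_2}$ and $W^{(n,m+2\ell)}_{\ee_1,\ee_2}$ are linked by composing two applications of Theorem~\ref{thm:w1hom} through an intermediate module $W^{(n,\cdot)}_{-\ee_1,\ee_2}$: pick $r\in\Z$ with $0<\ee_1 w_1+r\ell<\ell$, obtaining $W^{(n,m+2\ell)}_{\ee_1,\ee_2}\to W^{(n,m+2\ell-2(\ee_1 w_1+r\ell))}_{-\ee_1,\ee_2}\to W^{(n,m)}_{\ee_1,\ee_2}$. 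Iterating and invoking transitivity covers an entire congruence class mod $2\ell$, so combined with the \eqref{eq:w1neg} linkages this sweeps out exactly the equivalence relation described in the theorem.

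\emph{Main obstacle.} The main technical difficulty is bookkeeping the strict dominance hypotheses in Theorem~\ref{thm:w1hom} (the inequality $m>m-2(\ee_1 w_1+r\ell)>0$), which will fail for small $m$ or $n$. This is handled as in \cite{mgp3}: I first globalise via Proposition~\ref{prop:glob} to $b_N^x$ for $N\gg n$, where enough ``room'' exists to instantiate every required chain of homomorphisms, and then localise back using exactness of $F,F'$ together with the observation that none of the cell modules appearing in the blocks under consideration is annihilated by $F$ or $F'$ in this range. The $[n]=0$ degeneracy of $Z_n$ is dealt with in the same way.
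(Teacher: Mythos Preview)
Your proposal is correct and follows essentially the same approach as the paper: use the master equations together with the right-blob restriction (semisimple since $w_2\notin\Z$) to force $\eta_2=\ee_2$, then realise the remaining linkages \eqref{eq:w1neg} and \eqref{eq:trivial} via Theorem~\ref{thm:w1hom}, chaining two such maps through an intermediate $W^{(n,\cdot)}_{-\ee_1,\ee_2}$ for the $m\equiv t\pmod{2\ell}$ case. One small point: the passage from ``no $b_n^x$-homomorphism with $\ee_2\neq\eta_2$'' to ``not in the same block'' needs either the appendix result that blocks coincide with $\Hom$-quiver components on standards, or the observation that every link in a chain of standard maps must itself preserve $\ee_2$---you should make this explicit, as the paper does (in the non-root case it also uses that blocks have size at most two).
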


Figure \ref{fig:w1orw2} shows two plots of the cell modules, when just
$w_1$ and just $w_2$ are integral respectively. The arrows indicate a
homomorphism between the corresponding modules.

\begin{figure}
  \centering
  (i)\begin{tikzpicture}[scale=0.25,>=latex,baseline=0]
    \pgfmathsetmacro{\w}{1}
    \pgfmathsetmacro{\ww}{3/4}
    \draw (0,-8)--(0,8);
    \draw (-8,0)--(8,0);
    \foreach \m in {1,3,5,7}
    \foreach \e in {-1,1}
    \foreach \f in {-1,1}
    {       
      \draw (\e * \m - \e * \e * \w - \e * \f * \ww,\f * \m - \f * \e * \w - \f * \f * \ww)  node {$\bullet$};
    }
    \draw[white,fill=white] (-1 - \w - \ww, -1 - \w - \ww) circle (10pt);
    \draw (1 - \w - \ww, 1 - \w - \ww) -- ++(6,6) node[anchor=south west] {$+,+$};
    \draw (-1 - \w + \ww, 1 + \w - \ww) -- ++(-6,6) node[anchor=south east] {$-,+$};
    \draw (1 - \w + \ww, -1 + \w - \ww) -- ++(6,-6) node[anchor=north west] {$+,-$};
    \draw (-3 - \w - \ww, -3 - \w - \ww) -- ++(-4,-4) node[anchor=north east] {$-,-$};

    \draw[dashed,->] (3 - \w - \ww, 3 - \w - \ww) -- (-3 + \w + \ww, 3 - \w - \ww);
    \draw[dashed,->] (5 - \w - \ww, 5 - \w - \ww) -- (-5 + \w + \ww, 5 - \w - \ww);
    \draw[dashed,->] (7 - \w - \ww, 7 - \w - \ww) -- (-7 + \w + \ww, 7 - \w - \ww);
    \draw[dashed,->] (5 - \w + \ww, -5 + \w - \ww) -- (-5 + \w - \ww, -5 + \w - \ww);
    \draw[dashed,->] (7 - \w + \ww, -7 + \w - \ww) -- (-7 + \w - \ww, -7 + \w - \ww);
  \end{tikzpicture}\hspace{1em}
  (ii)\begin{tikzpicture}[scale=0.25,>=latex,baseline=0] 
    \pgfmathsetmacro{\w}{-1/4}
    \pgfmathsetmacro{\ww}{1}
    \draw (0,-8)--(0,8);
    \draw (-8,0)--(8,0);
    \foreach \m in {0,2,4,6,8}
    \foreach \e in {-1,1}
    \foreach \f in {-1,1}
    {       
      \draw (\e * \m - \e * \e * \w - \e * \f * \ww,\f * \m - \f * \e * \w - \f * \f * \ww)  node {$\bullet$};
    }
    \fill[white] (0 -\w + \ww, 0 + \w - \ww) circle (10pt);
    \fill[white] (0 -\w - \ww, 0 + \w - \ww) circle (10pt);
    \draw (0 - \w - \ww, 0 - \w - \ww) -- ++(8,8) node[anchor=south west] {$+,+$};
    \draw (-2 - \w + \ww, 2 + \w - \ww) -- ++(-6,6) node[anchor=south east] {$-,+$};
    \draw (2 - \w + \ww, -2 + \w - \ww) -- ++(6,-6) node[anchor=north west] {$+,-$};
    \draw (-2 - \w - \ww, -2 - \w - \ww) -- ++(-6,-6) node[anchor=north east] {$-,-$};

    \draw[dashed,->] (4 - \w - \ww, 4 - \w - \ww) -- (4 - \w - \ww, -4 + \w + \ww);
    \draw[dashed,->] (6 - \w - \ww, 6 - \w - \ww) -- (6 - \w - \ww, -6 + \w + \ww);
    \draw[dashed,->] (8 - \w - \ww, 8 - \w - \ww) -- (8 - \w - \ww, -8 + \w + \ww);
    \draw[dashed,->] (-4 - \w + \ww, 4 + \w - \ww) -- (-4 - \w + \ww, -4 - \w + \ww);
    \draw[dashed,->] (-6 - \w + \ww, 6 + \w - \ww) -- (-6 - \w + \ww, -6 - \w + \ww);
    \draw[dashed,->] (-8 - \w + \ww, 8 + \w - \ww) -- (-8 - \w + \ww, -8 - \w + \ww);
  \end{tikzpicture}
  \caption{Graphical depiction of the cell modules of (i) $b_8'$ with $w_1=1$ and $w_2=\frac{3}{4}$; and (ii) $b_9'$ with $w_1=-\frac{1}{4}$ and $w_2=1$.}\label{fig:w1orw2}
\end{figure}

\subsection{Either $w_1+w_2$ or $w_1-w_2$ integral}\label{sec:w1+w2orw1-w2}

We first turn to the case $w_1+w_2\in\Z$ but
$w_1-w_2\not\in\Z$. Again, we begin by taking $q$ to not be a root of
unity. Here, we are looking to satisfy equations \eqref{eq:w1w2neg}
and \eqref{eq:w1w2pos} with $\ee_1=\ee_2$. Once more, we see that
blocks have size at most two. Now if we have a solution to equation
\eqref{eq:w1w2neg}, then for these modules to be in the same block we
must have a non-zero homomorphism
\[\W{n,m}{+,+}\longrightarrow \W{n,m-2w_1-2w_2}{-,-}.\]
However by restricting both modules to the left-blob algebra we see
from Table \ref{tab:filtration} that the two modules have different
standard contents. Thus, since $w_1\not\in\Z$, we deduce that there
can be no such homomorphism.

In the case of equation \eqref{eq:w1w2pos}, we can again assume that
$m>t$. Then since $m$ and $t$ are both non-negative integers we can
only have a solution if $\ee_1=\ee_2=\mathrm{sgn}(w_1+w_2)$. Therefore
we have $m>t=2(\ee_1w_1+\ee_2w_2)-m\geq0$ (with equality only if
$\ee_1=\ee_2=1$), and thus we can use Theorem \ref{thm:w1w2hom} to
show that the cell modules $\W{n,m}{+,+}$ and $\W{n,t}{+,+}$ are
in the same block.

If $q$ is a $2\ell$-th root of unity, then we must consider equations
\eqref{eq:w1w2neg}, \eqref{eq:trivial}, \eqref{eq:w1w2pos} and
\eqref{eq:impossible}. Again, since neither $w_1$ nor $w_2$ are
integers we can rule out \eqref{eq:w1w2neg} and \eqref{eq:impossible}
by restricting to either the left- or right-blob algebra. In the case
of \eqref{eq:w1w2pos}, we can show that there exists a non-zero
homomorphism between the cell modules in the same way as above. It
remains, therefore, to deal with \eqref{eq:trivial}. We will begin by
showing that $\W{n,m}{\ee_1,\ee_2}$ and
$\W{n,m+2\ell}{\ee_1,\ee_2}$ are in the same block, and the general
result will follow. Indeed, we choose $r\in\Z$ such that
$m+2\ell>2(\ee_1w_1+\ee_2w_2+r\ell)-(m+2\ell)>m$, then by Theorem
\ref{thm:w1w2hom} we have non-zero homomorphisms
$\W{n,m+2\ell}{\ee_1,\ee_2}\longrightarrow
\W{n,2(\ee_1w_1+\ee_2w_2+r\ell)-(m+2\ell)}{\ee_1,\ee_2}$ and
$\W{n,2(\ee_1w_1+\ee_2w_2+r\ell)-(m+2\ell)}{\ee_1,\ee_2}\longrightarrow
\W{n,m}{\ee_1,\ee_2}$. Therefore our original pair of cell modules
are in the same block.
\begin{thm}\label{thm:w1+w2}
  Suppose $q$ is a $2\ell$-th root of unity and
  $w_1+w_2\in\Z,w_1-w_2\not\in\Z$. Then two cell modules
  $W^{(n,m)}_{\ee_1,\ee_2}$ and $W^{(n,t)}_{\eta_1,\eta_2}$ are in the
  same block if and only if $\ee_1=\eta_1=\ee_2=\eta_2$ and
  $|m-\ee_1w_1-\ee_2w_2|\equiv|t-\eta_1w_1-\eta_2w_2|$ (mod $2\ell$).

If $q$ is not a root of unity then replace the above congruence modulo $2\ell$ by an equality.
\end{thm}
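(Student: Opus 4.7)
The plan is to combine three ingredients: the necessary eigenvalue condition from Corollary~\ref{th:cellblockH} (unpacked as the master equations~\eqref{eq:w1w2neg}--\eqref{eq:impossible}), the explicit standard-module homomorphisms of Theorems~\ref{thm:qhom}--\ref{thm:w1w2hom}, and restriction to the left and right blob subalgebras, which are semisimple here because $w_1, w_2 \notin \Z$. Under the standing hypothesis, $w_1 + w_2 \in \Z$ while $w_1 - w_2 \notin \Z$ forces $2w_1, 2w_2 \notin \Z$, immediately eliminating equations \eqref{eq:w1neg}--\eqref{eq:w2pos}; and restricting the remaining equations \eqref{eq:w1w2neg} and \eqref{eq:w1w2pos} to the subcase $\ee_1 = \ee_2$, since $\ee_1 w_1 + \ee_2 w_2$ is integral only then.

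I would first treat $q$ not a root of unity. Here \eqref{eq:trivial} collapses to $m = t$ and \eqref{eq:impossible} is vacuous, leaving \eqref{eq:w1w2neg} and \eqref{eq:w1w2pos}. For \eqref{eq:w1w2neg}, any linkage would require a non-zero map $W^{(n,m)}_{+,+} \to W^{(n,m-2(w_1+w_2))}_{-,-}$ (or its mirror); but as left-blob modules these have standard contents of opposite sign by Table~\ref{tab:filtration}, and since $b_n$ is semisimple when $w_1 \notin \Z$ no such morphism exists. For \eqref{eq:w1w2pos}, positivity of $m, t$ pins down $\ee_1 = \ee_2 = \mathrm{sgn}(w_1 + w_2)$, and then Theorem~\ref{thm:w1w2hom} (with $\ell = 0$) produces the required map directly.

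For $q$ a primitive $2\ell$-th root of unity, equations \eqref{eq:w1w2neg} and \eqref{eq:impossible} are excluded by the same restriction argument, and \eqref{eq:w1w2pos} is handled exactly as above. The new content is \eqref{eq:trivial}: by transitivity it suffices to link $W^{(n,m)}_{\ee_1,\ee_2}$ with $W^{(n,m+2\ell)}_{\ee_1,\ee_2}$. I would do this via a zig-zag through an intermediate module, choosing $r \in \Z$ so that $t := 2(\ee_1 w_1 + \ee_2 w_2 + r\ell) - (m+2\ell)$ lies strictly between $m$ and $m+2\ell$, and then applying Theorem~\ref{thm:w1w2hom} twice (with shifts $r$ and $r-1$) to obtain non-zero maps $W^{(n,m+2\ell)}_{\ee_1,\ee_2} \to W^{(n,t)}_{\ee_1,\ee_2} \to W^{(n,m)}_{\ee_1,\ee_2}$. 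The main delicacy will be verifying that such an $r$ exists: since $\ee_1 w_1 + \ee_2 w_2 \in \Z$ the relevant coset is $\ell$-spaced, so the open interval of length $\ell$ admits at most one permissible value; any boundary residue class would force a fallback to Theorem~\ref{thm:qhom}, which remains applicable since the hypothesis guarantees $w_1, w_2 \notin \Z$.
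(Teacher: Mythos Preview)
Your proposal follows the paper's own argument essentially step for step: the same reduction via the master equations \eqref{eq:w1w2neg}--\eqref{eq:impossible}, the same use of restriction to the (semisimple) left/right blob subalgebras to kill \eqref{eq:w1w2neg} and \eqref{eq:impossible}, the same appeal to Theorem~\ref{thm:w1w2hom} for \eqref{eq:w1w2pos}, and the same two-step zig-zag through an intermediate reflection to handle \eqref{eq:trivial} in the root-of-unity case. Your extra remark about the boundary residue class and the fallback to Theorem~\ref{thm:qhom} is a refinement the paper does not spell out; the paper simply asserts the existence of the required $r$. One small point worth tightening: when you exclude \eqref{eq:w1w2neg} in the generic case you jump straight to ``any linkage would require a non-zero map'', which is only justified once you have observed (as the paper does) that the block has size at most two, so that block-linkage forces a direct homomorphism.
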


The case $w_1-w_2\in\Z_{>0}$ but $w_1$, $w_2$, $w_1+w_2\not\in\Z$ is proved similarly.
\begin{thm}\label{thm:w1-w2}
  Suppose $q$ is a $2\ell$-th root of unity and
  $w_1-w_2\in\Z,w_1+w_2\not\in\Z$. Then two cell modules
  $\W{n,m}{\ee_1,\ee_2}$ and $\W{n,t}{\eta_1,\eta_2}$ are in the
  same block if and only if $\ee_1=\eta_1=-\ee_2=-\eta_2$ and
  $|m-\ee_1w_1-\ee_2w_2|\equiv|t-\eta_1w_1-\eta_2w_2|$ (mod $2\ell$).

If $q$ is not a root of unity then replace the above congruence modulo $2\ell$ by an equality.
\end{thm}

Figure \ref{fig:w1+w2orw1-w2} shows two plots of the cell modules,
when just $w_1+w_1$ and just $w_1-w_2$ are integral respectively. The
arrows indicate a homomorphism between the corresponding modules.

\begin{figure}
  \centering
  (i)\begin{tikzpicture}[scale=0.25,>=latex,baseline=0]
    \pgfmathsetmacro{\w}{1/4}
    \pgfmathsetmacro{\ww}{11/4}
    \draw (0,-8)--(0,8);
    \draw (-8,0)--(8,0);
    \foreach \m in {0,2,4,6,8}
    \foreach \e in {-1,1}
    \foreach \f in {-1,1}
    {       
      \draw (\e * \m - \e * \e * \w - \e * \f * \ww,\f * \m - \f * \e * \w - \f * \f * \ww)  node {$\bullet$};
    }
    \fill[white] (0 -\w + \ww, 0 + \w - \ww) circle (10pt);
    \fill[white] (0 -\w - \ww, 0 + \w - \ww) circle (10pt);
    \draw (0 - \w - \ww, 0 - \w - \ww) -- ++(8,8) node[anchor=south west] {$+,+$};
    \draw (-2 - \w + \ww, 2 + \w - \ww) -- ++(-6,6) node[anchor=south east] {$-,+$};
    \draw (2 - \w + \ww, -2 + \w - \ww) -- ++(6,-6) node[anchor=north west] {$+,-$};
    \draw (-2 - \w - \ww, -2 - \w - \ww) -- ++(-6,-6) node[anchor=north east] {$-,-$};

    \draw[dashed,->] (4 - \w - \ww, 4 - \w - \ww) to[out=180,in=90] (2 - \w - \ww, 2 - \w - \ww);
    \draw[dashed,->] (6 - \w - \ww, 6 - \w - \ww) to[out=180,in=90] (0 - \w - \ww, 0 - \w - \ww);
  \end{tikzpicture}\hspace{1em}
  (ii)\begin{tikzpicture}[scale=0.25,>=latex,baseline=0] 
    \pgfmathsetmacro{\w}{1/4}
    \pgfmathsetmacro{\ww}{-7/4}
    \draw (0,-8)--(0,8);
    \draw (-8,0)--(8,0);
    \foreach \m in {1,3,5,7}
    \foreach \e in {-1,1}
    \foreach \f in {-1,1}
    {       
      \draw (\e * \m - \e * \e * \w - \e * \f * \ww,\f * \m - \f * \e * \w - \f * \f * \ww)  node {$\bullet$};
    }
    \fill[white] (-1 - \w - \ww, -1 - \w -\ww) circle (10pt);
    \draw (1 - \w - \ww, 1 - \w - \ww) -- ++(6,6) node[anchor=south west] {$+,+$};
    \draw (-1 - \w + \ww, 1 + \w - \ww) -- ++(-6,6) node[anchor=south east] {$-,+$};
    \draw (1 - \w + \ww, -1 + \w - \ww) -- ++(6,-6) node[anchor=north west] {$+,-$};
    \draw (-3 - \w - \ww, -3 - \w - \ww) -- ++(-4,-4) node[anchor=north east] {$-,-$};
    
    \draw[dashed,->] (3 - \w + \ww, -3 + \w - \ww) to[out=90,in=0] (1 - \w + \ww, -1 + \w - \ww);
  \end{tikzpicture}
  \caption{Graphical depiction of the cell modules of (i) $b_9'$ with $w_1=\frac{1}{4}$ and $w_2=\frac{11}{4}$; and (ii) $b_8'$ with $w_1=\frac{1}{4}$ and $w_2=-\frac{7}{4}$.}\label{fig:w1+w2orw1-w2}
\end{figure}

\subsection{Both $w_1+w_2$ and $w_1-w_2$ integral}\label{sec:w1+w2andw1-w2}

If now we have $w_1+w_2$, $w_1-w_2\in\Z$ but $w_1$, $w_2\not\in\Z$, then we
must have $w_1$, $w_2\in\frac{1}{2}\Z\backslash\Z$. The labels $m$ and
$t$ for cell modules all have the same parity, in particular $m\pm t$
is always even, whereas both $2w_1$ and $2w_2$ are odd. Therefore
there can be no solutions to
\eqref{eq:w1neg},\eqref{eq:w2neg},\eqref{eq:w1pos} nor
\eqref{eq:w2pos}. Thus we simply combine Theorems \ref{thm:w1+w2} and
\ref{thm:w1-w2}.
\begin{thm}\label{thm:w1+w2andw1-w2}
  Suppose $q$ is a $2\ell$-th root of unity and
  $w_1+w_2\in\Z$, $w_1-w_2\in\Z$ but $w_1$, $w_2\not\in\Z$. Then two cell
  modules $\W{n,m}{\ee_1,\ee_2}$ and $\W{n,t}{\eta_1,\eta_2}$
  are in the same block if and only if
  $|m-\ee_1w_1-\ee_2w_2|\equiv|t-\eta_1w_1-\eta_2w_2|$ (mod $2\ell$)
  and either
  \begin{enumerate}
  \item $\ee_1=\eta_1=\ee_2=\eta_2$, or
  \item $\ee_1=\eta_1=-\ee_2=-\eta_2$.
  \end{enumerate}
  If $q$ is not a root of unity then replace the above congruence modulo $2\ell$ by an equality.
\end{thm}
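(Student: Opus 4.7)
The plan is to use the master equations from Corollary \ref{th:cellblockH} together with a parity observation to reduce the statement to the combined content of Theorems \ref{thm:w1+w2} and \ref{thm:w1-w2}. First I would note that since $w_1,w_2\in\frac{1}{2}\Z\setminus\Z$, each of $2\ee w_1$ and $2\ee w_2$ is odd for $\ee\in\{\pm1\}$, whereas $2(\ee_1 w_1+\ee_2 w_2)$ is an even integer (using that both $w_1+w_2$ and $w_1-w_2$ lie in $\Z$). On the other hand, inspection of Figure \ref{fig:cmwl}(b) shows that the labels $m,t$ of cell modules of $\bnx$ share a common parity (odd if $n$ is even, even if $n$ is odd), so $m\pm t$ is always even.

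Combining these parity facts rules out any solution to the master equations \eqref{eq:w1neg}, \eqref{eq:w2neg}, \eqref{eq:w1pos}, \eqref{eq:w2pos}, since each would force an even quantity to be congruent to an odd one modulo the even modulus $2\ell$ (or to equal it outright when $q$ is not a root of unity). Hence by Corollary \ref{th:cellblockH} any pair of cell modules in the same block must satisfy one of \eqref{eq:w1w2neg}, \eqref{eq:trivial}, \eqref{eq:w1w2pos}, \eqref{eq:impossible}, all of which require either $(\ee_1,\ee_2)=(\eta_1,\eta_2)$ or $(\ee_1,\ee_2)=(-\eta_1,-\eta_2)$.

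For the cross-sign branch $(\ee_1,\ee_2)=(-\eta_1,-\eta_2)$ (cases \eqref{eq:w1w2neg} and \eqref{eq:impossible}), I would rule out any non-zero homomorphism by restricting to the left blob algebra: since $w_1\notin\Z$, this subalgebra is semisimple, so Table \ref{tab:filtration} shows that the standard contents of $W^{(n,m)}_{\ee_1,\ee_2}$ and $W^{(n,t)}_{-\ee_1,-\ee_2}$ are disjoint, forcing them into distinct blocks. For the same-sign branch $(\ee_1,\ee_2)=(\eta_1,\eta_2)$ the block relation splits further: the subcase $\ee_1=\ee_2$ is exactly the hypothesis of Theorem \ref{thm:w1+w2} (using $w_1+w_2\in\Z$), while the subcase $\ee_1=-\ee_2$ is that of Theorem \ref{thm:w1-w2} (using $w_1-w_2\in\Z$), and in both the required chains of homomorphisms are assembled from Theorems \ref{thm:qhom} and \ref{thm:w1w2hom}. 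The only mild obstacle I anticipate will be checking that the parity argument survives working modulo $2\ell$ rather than as a strict equality, which reduces to the observation that $2\ell$ is itself even.
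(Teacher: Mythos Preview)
Your argument is correct and mirrors the paper's: it makes the same parity observation (that $w_1,w_2\in\tfrac12\Z\setminus\Z$ forces $2\ee w_i$ odd while $m\pm t$ is even, killing \eqref{eq:w1neg}, \eqref{eq:w2neg}, \eqref{eq:w1pos}, \eqref{eq:w2pos}) and then writes simply ``combine Theorems \ref{thm:w1+w2} and \ref{thm:w1-w2}''. Your treatment is actually more explicit than the paper's about eliminating the cross-sign branch via blob restriction; the only tightening needed is that ``no homomorphism'' does not by itself give ``distinct blocks'' here (blocks can exceed size two), so you should phrase it as: the restriction kills homomorphisms between \emph{any} pair with opposite $\ee_1$, hence $\ee_1$ is constant on connected components of the Hom quiver on standards, and then the surviving master equations force $\ee_2$ constant as well.
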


Figure \ref{fig:w1+w2andw1-w2} shows a plot the cell modules when both
$w_1+w_2$ and $w_1-w_2$ are integral, but not $w_1$ nor $w_2$. The
arrows indicate a homomorphism between the corresponding modules.

\begin{figure}
  \centering
  \begin{tikzpicture}[scale=0.3,>=latex,baseline=0]
    \pgfmathsetmacro{\w}{5/2}
    \pgfmathsetmacro{\ww}{-1/2}
    \draw (0,-8)--(0,8);
    \draw (-8,0)--(8,0);
    \foreach \m in {1,3,5,7}
    \foreach \e in {-1,1}
    \foreach \f in {-1,1}
    {       
      \draw (\e * \m - \e * \e * \w - \e * \f * \ww,\f * \m - \f * \e * \w - \f * \f * \ww)  node {$\bullet$};
    }
    \fill[white] (-1 - \w- \ww, -1 - \w - \ww) circle (10pt);
    \draw (1 - \w - \ww, 1 - \w - \ww) -- ++(6,6) node[anchor=south west] {$+,+$};
    \draw (-1 - \w + \ww, 1 + \w - \ww) -- ++(-6,6) node[anchor=south east] {$-,+$};
    \draw (1 - \w + \ww, -1 + \w - \ww) -- ++(6,-6) node[anchor=north west] {$+,-$};
    \draw (-3 - \w - \ww, -3 - \w - \ww) -- ++(-4,-4) node[anchor=north east] {$-,-$};

    \draw[dashed,->] (5 - \w + \ww, -5 + \w - \ww) to[out=90,in=0] (1 - \w + \ww, -1 + \w - \ww);
    \draw[dashed,->] (3 - \w - \ww, 3 - \w - \ww) to[out=-90,in=0] (1 - \w - \ww, 1 - \w - \ww);
  \end{tikzpicture}
\caption{Graphical depiction of the cell modules of $b_8'$ with $w_1=\frac{5}{2}$ and $w_2=-\half$.}\label{fig:w1+w2andw1-w2}
\end{figure}
\subsection{Both $w_1$ and $w_2$ integral}\label{sec:w1andw2}

Finally, we consider the case when $w_1$, $w_2\in\Z$ and $q$ not a
root of unity. As explained in the beginning of this section, we can
globalise appropriately so that we only consider cell modules
$\W{n,m}{\ee_1,\ee_2}$ with $m\ll N$ and $w_1,w_2>0$. We may also
assume that $w_1\leq w_2$, since we can swap blobs and flip diagrams
horizontally to make this the case. By fixing
$\ee_1$ and $\ee_2$ and considering the equations
\eqref{eq:w1w2neg}--\eqref{eq:impossible} we see that each block has
size at most 4 as  $q$ is not a root of unity. 
By repeated
globalisation we will determine the blocks containing modules of the
form $\W{N,m}{+,+}$ where $N\gg m$. By then repeated localising, we
see that we will in fact deal with each cell module in $b'_n$. Care
must be taken when localising, as we will encounter blocks of size 4
in the large $N$ limit which may break up into singleton blocks when
localising back to $n$.

Consider first the case $m<w_1+w_2$. Then we see by Theorem
\ref{thm:w1w2hom} that we have a homomorphism
\[\W{N,2w_1+2w_2-m}{+,+}\longrightarrow \W{N,m}{+,+}.\]
Now we are assuming that $0<w_1\leq w_2$. Thus
$2w_1+2w_2-m>w_1+w_2\geq2w_1$, therefore $2w_1+2w_2-m-2w_1=2w_2-m>0$
and we can use Theorem \ref{thm:w1hom} to obtain a homomorphism
\[\W{N,2w_1+2w_2-m}{+,+}\longrightarrow \W{N,2w_2-m}{-,+}.\]
When localising it is possible that the module
$\W{N,2w_1+2w_2-m}{+,+}$ will be annihilated, in which case the
remaining two modules may not be linked via homomorphisms. We now show
that this is the case:
\begin{lem}\label{lem:nohom}
  Suppose $q\in\C^\times$ is not a root of unity and
  $w_1$, $w_2\in\Z_{>0}$ be as above. Let $m<w_1+w_2$ be a non-negative
  integer. Then we have
  \[\Hom(\W{n,m}{+,+}, \W{n,2w_2-m}{-,+})=\Hom(\W{n,2w_2-m}{-,+},\W{n,m}{+,+})=0.\]
\end{lem}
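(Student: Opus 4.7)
The plan is as follows. A direct calculation using Theorem~\ref{thm:centralelement}, together with the antisymmetry $[-x]=-[x]$, gives $\alpha^{(n,m)}_{+,+} = \alpha^{(n,2w_2-m)}_{-,+}$, so $Z_n$ acts by the same scalar on both modules and cannot be used to separate them. A finer invariant is needed, and I will restrict both cell modules to the left blob subalgebra $b_n \subset \bnx$ (generated by $e_0, e_1, \ldots, e_{n-1}$) and invoke the known block structure of $b_n$ in the regime $q$ generic, $w_1 \in \Z_{>0}$ \cite{martsaleur,blobcgm}.

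By Table~\ref{tab:filtration}, the restriction $W^{(n,m)}_{+,+}|_{b_n}$ is filtered by positive-labelled $b_n$-standards $W_s(n)$ with $s \in \{m+1, m+3, \ldots, n\}$, while $W^{(n,2w_2-m)}_{-,+}|_{b_n}$ is filtered by negative-labelled standards $W_{-t}(n)$ with $t \in \{2w_2-m+1, 2w_2-m+3, \ldots, n\}$. Since any $\bnx$-homomorphism is in particular a $b_n$-homomorphism upon restriction, it is enough to prove that $\Hom_{b_n}$ vanishes between these restrictions.

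The blocks of $b_n$ at generic $q$ and integer $w_1$ link standards via the reflection $s \mapsto 2w_1 - s$. Hence a positive-labelled simple $L_s$ and a negative-labelled simple $L_{-t}$ lie in the same $b_n$-block only when $s - t = 2w_1$. In the setting of the lemma, the intermediate cell module $W^{(n, 2(w_1+w_2)-m)}_{+,+}$ has been annihilated by the localisation we just performed, which forces $n \leq 2(w_1+w_2) - m$; thus for any $s, t$ arising from the above filtrations we have
\[
s - t \leq n - (2w_2 - m + 1) \leq 2w_1 - 1 < 2w_1,
\]
ruling out the linkage. Therefore the restrictions to $b_n$ lie in disjoint unions of blocks, so $\Hom_{b_n}(W^{(n,m)}_{+,+}|_{b_n}, W^{(n,2w_2-m)}_{-,+}|_{b_n}) = 0$, and hence $\Hom_{\bnx}(W^{(n,m)}_{+,+}, W^{(n,2w_2-m)}_{-,+}) = 0$. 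The reverse direction $\Hom(W^{(n,2w_2-m)}_{-,+}, W^{(n,m)}_{+,+}) = 0$ follows by the identical argument, since the head-of-$N$ meeting socle-of-$M$ criterion produces exactly the same identity $s - t = 2w_1$.

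The main obstacle is the precise identification and citation of the $b_n$-block structure at generic $q$ and $w_1 \in \Z_{>0}$, which I take from \cite{martsaleur,blobcgm}. Once that input is in hand, the decisive step is the range constraint $n \leq 2(w_1+w_2) - m$ coming from the localisation context; this is exactly what furnishes the strict inequality $s - t < 2w_1$ needed to separate the two restricted modules into disjoint $b_n$-blocks.
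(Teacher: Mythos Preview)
Your restriction-to-$b_n$ strategy and the blob block input $u\leftrightarrow 2w_1-u$ match the paper's method, but there is a real gap. The constraint $n\leq 2(w_1+w_2)-m$ is \emph{not} a hypothesis of the lemma: the statement is for arbitrary $n$, and the paper proves it for arbitrary $n$. You import that bound from the surrounding discussion (``the localisation we just performed''), but no localisation has been performed inside your proof.

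For $n>2(w_1+w_2)-m$ your block-separation argument breaks down. Taking $t=2w_2-m+1$ and $s=t+2w_1=2w_1+2w_2-m+1\leq n$, both $W_s(n)$ and $W_{-t}(n)$ occur in the respective filtrations and satisfy $s-t=2w_1$, so the two restrictions are no longer block-disjoint in $b_n$. Hence the inequality $s-t\leq n-(2w_2-m+1)\leq 2w_1-1$ is exactly where the argument fails without the bound on $n$.

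The paper handles this by first applying the exact localisation functor $F$ (an even number of times, so no parameter change) to push $n$ down to $m{+}1$; a nonzero $\bnx$-hom survives under $F$ because the simple head $L^{(n,m)}_{+,+}$ is not among the simples $F$ kills until the very last step. At $n=m{+}1$ the source restricts to the single standard $W_{m+1}(m{+}1)$, and your comparison becomes immediate. For the reverse direction the paper applies $F$ an odd number of times (with the parameter shift $w_1\mapsto -w_1-1$) to reach $n=2w_2-m$. In short, your argument is essentially the endgame of the paper's proof; what is missing is the localisation step that makes the range constraint legitimate.
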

\begin{proof}
  If we have a non-zero homomorphism between the cell modules, then
  this must restrict to a homomorphism between cell modules for the
  left blob algebra. We will show that there can be no such
  homomorphism.

Suppose first that we have a non-zero homomorphism
$\W{n,m}{+,+}\longrightarrow \W{n,2w_2-m}{-,+}$. By applying the
localisation functor $F$ an even number of times (so that no parameter
change occurs), we may assume that $n=m+1$. Now restricting to the
left blob algebra, we see from Table \ref{tab:filtration} that
$\W{m+1,m}{+,+}$ has standard content $W_{m+1}(m+1)$ and
$\W{m+1,2w_2-m}{-,+}$ has standard content
$W_{-(m+1)}(m+1),W_{-(m-1)}(m+1),\ldots,W_{-(2w_2-m+1)}(m+1)$. As such,
we must have a left blob homomorphism from the trivial module to one
of the latter standard modules. However since $q$ is not a root of
unity, the module $W_{m+1}(m+1)$ is mapped only to
$W_{2w_1-m-1}(m+1)$. We then note that $2w_1-m-1>w_1-w_2-1$, whereas
$-(2w_2-m+1)<w_1-w_2-1$. Therefore there is no left-blob homomorphism
in the restriction, and thus no symplectic blob homomorphism.

If we now assume that we have a homomorphism
$\W{n,2w_2-m}{-,+}\longrightarrow \W{n,m}{+,+}$, then by
applying the localisation functor $F$ an odd number of times we may
assume that $n=2w_2-m$. Then by restricting to the left blob algebra
we see that $\W{2w_2-m,2w_2-m-1}{+,+}$ has standard content
$W_{2w_2-m}(2w_2-m)$, whereas $\W{2w_2-m,m+1}{-,+}$ has standard
content $W_{-(2w_2-m)}(2w_2-m),\dots,W_{-(m+2)}(2w_2-m)$. Again by
considering left blob homomorphisms with $q$ not a root of unity, and
taking the parameter change $w_1\mapsto -w_1-1$ into account, we see
that the trivial module is mapped only to
$2(-w_1-1)-2w_2+m=-2(w_1+2w_2)+m-2$. Note now that this is less than
$-2w_2+m$, which is the least label in the latter set of standard
contents. Thus there can be no left blob homomorphism, and hence no
symplectic blob homomorphism.
\end{proof}
So far we have found three distinct modules with the same eigenvalue,
all linked via homomorphisms.

If also $2w_1+2w_2-m>2w_2$ (so that $m<2w_1$), then we can use Theorem
\ref{thm:w2hom} to obtain a homomorphism
\[\W{N,2w_1+2w_2-m}{+,+}\longrightarrow \W{N,2w_1-m}{+,-}.\]
In a manner similar to Lemma \ref{lem:nohom}, we can show that there
are no homomorphisms between $\W{N,2w_1-m}{+,-}$ and
$\W{N,2w_2-m}{-,+}$ or $\W{N,m}{+,+}$. Since blocks have size at
most four, this block has the structure as in Figure \ref{fig:m<2w1},
where arrows indicate the existence of a homomorphism and lack of
arrows indicates the non-existence of a homomorphism.

\begin{figure}[ht]
  \centering
  \tikz{
  \node (A) {$\W{N,2w_1+2w_2-m}{+,+}$};
  \node[below left of = A,node distance=7em, yshift=2em] (B) {$\W{N,m}{+,+}$};
  \node[below right of = A, node distance=7em,yshift=2em] (C) {$\W{N,2w_2-m}{-,+}$};
  \node[below of = A, node distance=5em] (D) {$\W{N,2w_1-m}{+,-}$};
  \draw[->] (A)--(B);
  \draw[->] (A)--(C);
  \draw[->] (A)--(D);}
\caption{The block structure for $m<2w_1$.}\label{fig:m<2w1}
\end{figure}

Note that if $n\leq 2w_1+2w_2-m$, then the module
$\W{n,2w_1+2w_2-m}{+,+}$ does not exist. Since there are no
homomorphisms between the remaining three modules, the block breaks up
into singleton blocks.

If now $2w_1+2w_2-m=2w_2$ (so that $m=2w_1$), then we claim that there
are at most three possible solutions to equations
\eqref{eq:w1w2neg}--\eqref{eq:impossible}. In particular we can
satisfy neither \eqref{eq:w1neg} nor \eqref{eq:w1pos} and so the block
has size at most three, and we have found enough homomorphisms. Since
we have fixed $\ee_1=1$, equations \eqref{eq:w1neg} and
\eqref{eq:w1pos} reduce to $m\pm t=m$, so that $t=0$. However the only
module that exists when $t=0$ also must have $\eta_1=\eta_2=1$, which
is not valid when considering this pair of equations. We thus have the
block structure as in Figure \ref{fig:m=2w1}, with the same convention
for arrows (or lack thereof).

\begin{figure}[ht]
  \centering
  \tikz{
  \node (A) {$\W{N,2w_2}{+,+}$};
  \node[below left of = A,node distance=7em] (B) {$\W{N,2w_1}{+,+}$};
  \node[below right of = A, node distance=7em] (C) {$\W{N,2w_2-2w_1}{-,+}$};
    \draw[->] (A)--(B);
    \draw[->] (A)--(C);}
\caption{The block structure for $m=2w_1$.}\label{fig:m=2w1}
\end{figure}

Again we see that if $n\leq 2w_2$ then the module $\W{n,2w_2}{+,+}$
does not exist. Therefore the block once more breaks up into singleton
blocks.

Finally, if $2w_1+2w_2-m<2w_2$, then we have $m>2w_1$ and so we can
again use Theorem \ref{thm:w1hom} to obtain
\[\W{N,m}{+,+}\longrightarrow \W{N,m-2w_1}{-,+}.\]
We also see that the modules $\W{N,2w_2-m}{-,+}$ and
$\W{N,m-2w_1}{-,+}$ satisfy the conditions of Theorem
\ref{thm:w1w2hom}, and thus have a homomorphism
\[\W{N,2w_2-m}{-,+}\longrightarrow \W{N,m-2w_1}{-,+}.\]
Again, since blocks have size at most four, this block has the
structure as in Figure \ref{fig:m>2w1}, where again the arrows
indicate the existence of a homomorphism, the lack of arrows the
non-existence, and a dotted line indicates an unknown (which will not
matter when considering the block structure).

\begin{figure}[ht]
\centering
\tikz{
  \node (A) {$\W{N,2w_1+2w_2-m}{+,+}$};
  \node[below left of = A,node distance=7em] (B) {$\W{N,m}{+,+}$};
  \node[below right of = A, node distance=7em] (C) {$\W{N,2w_2-m}{-,+}$};
  \node[below left of = C, node distance=7em] (D) {$\W{N,m-2w_1}{-,+}$};
  \draw[->] (A)--(B);
  \draw[->] (A)--(C);
  \draw[->] (B)--(D);
  \draw[->] (C)--(D);
  \draw[dotted] (A)--(D);}
\caption{The block structure for $2w_1<m<w_1+w_2$}\label{fig:m>2w1}
\end{figure}

As in the previous two cases, if $n\leq2w_1+2w_2-m$ then the module
$\W{n,2w_1+2w_2-m}{+,+}$ no longer exists. However since we have
homomorphisms between the remaining modules, the block does not
decompose further. The same is true if $n\leq m$ or $n<2w_2-m$.

  We have now accounted for all modules $\W{n,m}{+,+}$ with $0\leq
  m\leq 2w_1+2w_2$ $(m\neq w_1+w_2)$, $\W{n,m}{-,+}$ with
  $0<m\leq2w_2$ $(m\neq w_2-w_1)$, and $\W{n,m}{+,-}$ with
  $0<m\leq2w_1$.

Suppose now $m=w_1+w_2$, again with $w_1\leq w_2$. In this case, we
show that there are at most two modules in the block and find a
homomorphism between them. Indeed, from equations
\eqref{eq:w1w2neg}--\eqref{eq:impossible} the only non-trivial
solution is $l=w_2-w_1$, $\ee_1\neq\eta_1$, $\ee_2=\eta_2$. But then
this satisfies the conditions of Theorem \ref{thm:w1hom}, and we have
a homomorphism
\[\W{N,w_1+w_2}{+,+}\longrightarrow \W{N,w_2-w_1}{-,+}.\]
This gives a block structure as in Figure \ref{fig:m=w1+w2}.

\begin{figure}[ht]
  \centering
  \tikz{
    \node (A) {$\W{N,w_1+w_2}{+,+}$};
    \node[below of = A, node distance=7em] (B) {$\W{N,w_2-w_1}{-,+}$};
    \draw[->] (A)--(B);}
  \caption{The block structure for $m=w_1+w_2$.}
  \label{fig:m=w1+w2}
\end{figure}

This means that now we have covered all modules $\W{n,m}{+,+}$ with
$0\leq m\leq 2w_1+2w_2$, $\W{n,m}{-,+}$ with $0<m\leq2w_2$, and
$\W{n,m}{+,-}$ with $0<m\leq2w_1$.

So it remains to consider the case $m>2w_1+2w_2$. Here we have both
$2w_1<m$ and $2w_2<m$, so both Theorems \ref{thm:w1hom} and
\ref{thm:w2hom} are satisfied and we have homomorphisms
\begin{align*}
  \W{N,m}{+,+}&\longrightarrow \W{N,m-2w_1}{-,+}\text{, and}\\
  \W{N,m}{+,+}&\longrightarrow \W{N,m-2w_2}{+,-}.
\end{align*}
But in this case we also have $m-2w_2>2w_1$ and $m-2w_1>2w_2$, so we
can again use Theorems \ref{thm:w1hom} and \ref{thm:w2hom} to obtain
homomorphisms
\begin{align*}
 \W{N,m-2w_2}{+,-}&\longrightarrow \W{N,m-2w_1-2w_2}{-,-}\text{, and}\\
 \W{N,m-2w_1}{-,+}&\longrightarrow \W{N,m-2w_1-2w_2}{-,-}.
\end{align*}
Note that this final case deals with all modules $\W{n,m}{+,+}$
with $m>2w_1+2w_2$, $\W{n,m}{-,+}$ with $m>2w_2$, $\W{m,n}{+,-}$
with $m>2w_1$ and $\W{n,m}{-,-}$ with $m>0$. In this case, the
block structure is shown in Figure \ref{fig:stable}.

\begin{figure}[ht]
  \centering
  \tikz{
    \node (A) {$\W{N,m}{+,+}$};
    \node[below left of = A,node distance=7em] (B) {$\W{N,m-2w_1}{-,+}$};
    \node[below right of = A, node distance=7em] (C) {$\W{N,m-2w_2}{+,-}$};
    \node[below left of = C, node distance=7em] (D) {$\W{N,m-2w_1-2w_2}{-,-}$};
    \draw[->] (A)--(B);
    \draw[->] (A)--(C);
    \draw[->] (B)--(D);
    \draw[->] (C)--(D);
    \draw[dotted] (A)--(D);
    \draw[dotted] (B)--(C);}
  \caption{The block structure for large $m$.}\label{fig:stable}
\end{figure}

The cases displayed in Figures \ref{fig:m<2w1}--\ref{fig:stable} above
exhaust the list of modules and we are able to give the final main
result.
\begin{thm}\label{thm:qnot1w1w2}
  Suppose $q$ is not a root of unity and $w_1$, $w_2\in\Z$. Let
  $\sigma_1=\mathrm{sgn}(w_1),\sigma_2=\mathrm{sgn}(w_2).$ Then for
  $n\geq2|w_1|+2|w_2|+\half(\sigma_1+\sigma_2)$, two cell modules
  $\W{n,m}{\ee_1,\ee_2}$ and $\W{n,l}{\eta_1,\eta_2}$ are in the
  same block if and only if
  $|m-\ee_1w_1-\ee_2w_2|=|l-\eta_1w_1-\eta_2w_2|$.
\end{thm}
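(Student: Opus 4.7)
My plan is to combine the necessary condition from the central element with an exhaustive, case-by-case production of homomorphisms between cell modules having matching $Z_n$-eigenvalue, performing the construction in a large-$n$ algebra obtained by globalisation and then localising back. This is the natural continuation of the scheme already executed in \S\ref{sec:noneint}--\S\ref{sec:w1+w2andw1-w2}, adapted to the present integrality pattern.

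\emph{Necessity.} By Corollary~\ref{th:cellblockH}, a common block requires $\alpha^{(n,m)}_{\ee_1,\ee_2}=\alpha^{(n,l)}_{\eta_1,\eta_2}$. With $q$ not a root of unity, setting $\ell=0$ in the master equations \eqref{eq:w1w2neg}--\eqref{eq:impossible} shows that this coincidence is equivalent to $m-\ee_1 w_1-\ee_2 w_2=\pm(l-\eta_1 w_1-\eta_2 w_2)$, which is the stated absolute-value identity (equation \eqref{eq:impossible} being vacuous and \eqref{eq:trivial} the identity).

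\emph{Sufficiency -- set-up.} Using Proposition~\ref{prop:glob} I globalise to $b^x_N$ with $N\gg n$. Applying $G$ or $G'$ if needed, which implement $w_i\mapsto -w_i-1$, and the left/right blob interchange, I may assume $0<w_1\le w_2$ and that the reference module is of the form $W^{(N,m)}_{+,+}$. The equal-eigenvalue equations then single out at most three potential partners: $W^{(N,2w_1+2w_2-m)}_{+,+}$, $W^{(N,|m-2w_1|)}_{-\sgn(m-2w_1),+}$, $W^{(N,|m-2w_2|)}_{+,-\sgn(m-2w_2)}$ and $W^{(N,|m-2w_1-2w_2|)}_{-,-}$, so blocks have size at most four.

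\emph{Sufficiency -- case analysis in $m$.} I verify by direct construction that every pair with matching eigenvalue lies in one block, split into the ranges already enumerated in \S\ref{sec:w1andw2}:
\begin{itemize}
\item $m<2w_1$: Theorem~\ref{thm:w1w2hom} yields $W^{(N,2w_1+2w_2-m)}_{+,+}\to W^{(N,m)}_{+,+}$ and Theorems~\ref{thm:w1hom}, \ref{thm:w2hom} give the two further arrows from the peak, producing Figure~\ref{fig:m<2w1}.
\item $m=2w_1<2w_2$: only three eigenvalue-matching modules exist; Figure~\ref{fig:m=2w1} is filled in by Theorems~\ref{thm:w1w2hom} and \ref{thm:w2hom}.
\item $2w_1<m<w_1+w_2$: Theorems~\ref{thm:w1hom}, \ref{thm:w2hom}, \ref{thm:w1w2hom} give a diamond, Figure~\ref{fig:m>2w1}.
\item $m=w_1+w_2$: the only non-trivial partner is $W^{(N,w_2-w_1)}_{-,+}$, linked by Theorem~\ref{thm:w1hom}, Figure~\ref{fig:m=w1+w2}.
\item $m>2w_1+2w_2$: the stable diamond of Figure~\ref{fig:stable}.
\end{itemize}
Lemma~\ref{lem:nohom} and its symmetric analogues, obtained by restriction to the left/right blob algebra together with the known decomposition numbers for the generic blob algebra of \cite{martsaleur}, rule out the ``diagonal'' homomorphisms that would be needed to connect modules bypassing the peak; this is what justifies the absence of arrows in each figure and hence the sharpness of the structure.

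\emph{Localisation.} Localising back from $b^x_N$ to $b^x_n$ via $F$ or $F'$ annihilates a module only when its label is too large. The numerical hypothesis $n\ge 2|w_1|+2|w_2|+\tfrac12(\sigma_1+\sigma_2)$ is precisely the threshold guaranteeing the peak module $W^{(n,2w_1+2w_2-m)}_{+,+}$ (or its symmetry-image) still exists; once it survives localisation, the three remaining modules remain linked to it, so no block splits.

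\emph{Main obstacle.} The hard step is Lemma~\ref{lem:nohom} and its variants: for small $n$ the peak module can vanish, and one must exclude any direct homomorphism between the four (or fewer) surviving partners. This excludes ``accidental'' linkages and underpins both the upper bound on block size and the sharpness of the hypothesis on $n$; establishing this requires the generic blob decomposition data combined with the standard content of Table~\ref{tab:filtration}.
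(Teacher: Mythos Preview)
Your proposal is correct and follows essentially the same approach as the paper: necessity via the central element, sufficiency by globalising to large $N$ with $0<w_1\le w_2$, case-by-case construction of homomorphisms from Theorems~\ref{thm:w1hom}--\ref{thm:w1w2hom} producing the diagrams of Figures~\ref{fig:m<2w1}--\ref{fig:stable}, and then localising. Two small points where the paper is more explicit than your write-up: (i) the range $w_1+w_2<m\le 2w_1+2w_2$ is not listed in your case split, though it is covered implicitly since such an $m$ is the peak $2w_1+2w_2-m'$ of one of your earlier cases; (ii) when you localise back, the paper verifies directly that the block invariant $|m-\ee_1w_1-\ee_2w_2|$ is preserved under the parameter shifts $w_i\mapsto -w_i-1$ carried by $F,F'$, via the identity $|m-\ee_1w_1-\ee_2w_2|=|(m+\ee_1)-(-\ee_1)(-w_1-1)-\ee_2w_2|$, whereas you only gesture at a ``symmetry-image''. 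Finally, your ``main obstacle'' paragraph slightly overstates the role of Lemma~\ref{lem:nohom}: for the present theorem (large $n$) the absence-of-arrow information is not needed, since necessity comes entirely from $Z_n$ and sufficiency only requires connectivity; that lemma is what drives the small-$n$ companion result, Theorem~\ref{thm:qnot1w1w2loc}.
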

\begin{proof}
  In the case $n\geq2|w_1|+2|w_2|+\half(\sigma_1+\sigma_2)$, none of
  the blocks described by Figures \ref{fig:m<2w1} and \ref{fig:m=2w1}
  break up into singleton blocks. Therefore in the globalised case
  with $w_1,w_2>0$, the Theorem follows. Notice that
  \begin{align*}
    |m-\ee_1w_1-\ee_2w_2|&=|(m+\ee_1)-(-\ee_1)(-w_1-1)-\ee_2w_2|\\
    &=|(m+\ee_2)-\ee_1w_1-(-\ee_2)(-w_2-1)|\\
    &=|(m+\ee_1+\ee_2)-(-\ee_1)(-w_1-1)-(-\ee_2)(-w_2-1)|,
  \end{align*}
and so the result holds after localising.
\end{proof}
In the case $n<2|w_1|+2|w_2|+\half(\sigma_1+\sigma_2)$, we do not have
a succinct characterisation of the blocks of $b'_n$. We therefore make
the following statement.
\begin{thm}\label{thm:qnot1w1w2loc}
Suppose $q$ is not a root of unity and $w_1,w_2\in\Z$. Let
$\sigma_1=\mathrm{sgn}(w_1),\sigma_2=\mathrm{sgn}(w_2).$ Then for
$n<2|w_1|+2|w_2|+\half(\sigma_1+\sigma_2)$, the blocks of $b'_n$ are
obtained by considering the blocks of the algebra $b'_N$ with $N\gg
n$, $w_1,w_2>0$ as in Figures \ref{fig:m<2w1}--\ref{fig:stable}, then
localising back to $b'_n$. The blocks are then given by removing any
annihilated modules and associated arrows from the appropriate figure
and taking the connected components of what remains.
\end{thm}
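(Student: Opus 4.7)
The plan is to reduce to the large-$n$ case of Theorem~\ref{thm:qnot1w1w2} by using the globalisation and localisation functors of Proposition~\ref{prop:glob}. The parameters $w_1, w_2$ are preserved by $G^2$ and $(G')^2$, so applying an even total number of globalisations leaves the algebra parameters (and the cell module labels $m, \ee_1, \ee_2$) unchanged while only increasing $n$. As in the proof of Theorem~\ref{thm:qnot1w1w2}, I may first reduce to the case $0 < w_1 \le w_2$: a single $G$ effects $w_1 \mapsto -w_1 - 1$ and a single $G'$ effects $w_2 \mapsto -w_2 - 1$, restoring positivity, while flipping diagrams horizontally swaps $w_1 \leftrightarrow w_2$.

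Given $n < 2w_1 + 2w_2 + \frac{1}{2}(\sigma_1 + \sigma_2)$, I would pick an even integer $N - n$ large enough that Theorem~\ref{thm:qnot1w1w2} applies to $b'_N$. Each cell module $W^{(n,m)}_{\ee_1,\ee_2}$ of $b'_n$ globalises under $G^{2k}(G')^{2k'}$ (with $2k + 2k' = N - n$) to $W^{(N,m)}_{\ee_1,\ee_2}$ of $b'_N$, whose block is one of those described by Figures~\ref{fig:m<2w1}--\ref{fig:stable}. Applying $F^{2k}(F')^{2k'}$ returns us to $b'_n$. Exactness of $F, F'$ ensures that every arrow in the relevant figure --- constructed via Theorems~\ref{thm:qhom}--\ref{thm:w1w2hom}, and so representing a non-zero homomorphism in $b'_N$ --- descends either to a non-zero homomorphism between the localised modules (when both endpoints survive) or to the zero map (when one or both endpoints are annihilated). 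A module $W^{(N,m)}_{\ee_1,\ee_2}$ is annihilated at some stage of the descent precisely when an intermediate application of $F$ or $F'$ meets the boundary condition of Proposition~\ref{prop:glob}, and this can be read off from the labels alone. Thus the surviving arrows link the surviving modules into common blocks of $b'_n$.

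For the converse --- that distinct connected components of the truncated figure give distinct blocks of $b'_n$ --- I would invoke Corollary~\ref{th:cellblockH} to confine blocks to a common $Z_n$-eigenvalue class, and then generalise Lemma~\ref{lem:nohom} to rule out homomorphisms between surviving modules in different components of that class. The argument restricts to the left or right blob algebra, which is semisimple since $q$ is not a root of unity, and compares standard contents via Table~\ref{tab:filtration} to obstruct any putative short-circuit homomorphism. Since the standard content and the decoration parities involved are determined purely by the labels $m, \ee_1, \ee_2$ of the cell modules, this analysis is essentially independent of $n$ and carries over to the small-$n$ regime without modification.

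The hard part will be the case analysis for this converse: when a ``hub'' module such as $W^{(N,2w_1+2w_2-m)}_{+,+}$ in Figure~\ref{fig:m<2w1} fails to survive the localisation, one must verify for every remaining pair that no direct homomorphism short-circuits the lost arrow. Each such check reduces --- by restriction to a semisimple blob algebra and comparison of standard contents --- to a finite verification paralleling Lemma~\ref{lem:nohom}, so the obstacle is essentially bookkeeping across the several configurations in Figures~\ref{fig:m<2w1}--\ref{fig:stable} rather than anything conceptually novel.
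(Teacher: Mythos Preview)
Your proposal is correct and follows essentially the same route as the paper. In the paper the theorem has no separate proof: it is a summary of the case analysis in \S\ref{sec:w1andw2}, where for each of Figures~\ref{fig:m<2w1}--\ref{fig:stable} the authors note explicitly which modules are annihilated upon localisation and, using Lemma~\ref{lem:nohom} and its analogues together with the $Z_n$-eigenvalue bound, whether the remaining modules stay linked or split into singletons --- exactly the globalise/localise-and-check-survivors argument you outline.
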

Turning now to the case when $q$ is a $2\ell$-th root of unity, we
will determine blocks in the large $N$ limit. Here, we will show that
any pair of cell modules satisfying equations
\eqref{eq:w1w2neg}--\eqref{eq:impossible} are in the same
block. Therefore in what follows, assume that
$\W{N,m}{\ee_1,\ee_2}$ and $\W{N,t}{\eta_1,\eta_2}$ are two cell
modules satisfying the current equation in question. This will
determine $\eta_1,\eta_2$ in terms of $\ee_1,\ee_2$, and also the
congruence class of $t$ modulo $2\ell$.

We deal with equations \eqref{eq:w1neg}, \eqref{eq:w2neg},
\eqref{eq:trivial} and \eqref{eq:w1w2pos} simultaneously. By applying
the arguments from Sections \ref{sec:w1orw2} and
\ref{sec:w1+w2orw1-w2}, we see that $\W{N,m}{\ee_1,\ee_2}$ and
$\W{N,t}{\eta_1,\eta_2}$ are in the same block.

Now consider equation \eqref{eq:w1w2neg}. In this case we choose
$r\in\Z$ such that $m-2(\ee_1w_1+r\ell)>0$ and consider the cell
module $\W{N,m-2(\ee_1w_1+r\ell)}{-\ee_1,\ee_2}$. We then see that
$\W{N,m-2(\ee_1w_1+r\ell)}{-\ee_1,\ee_2}$ and
$\W{N,m}{\ee_1,\ee_2}$ satisfy \eqref{eq:w1neg}, so are in the same
block by previous arguments. Moreover
$\W{N,m-2(\ee_1w_1+r\ell)}{-\ee_1,\ee_2}$ and
$\W{N,t}{\eta_1,\eta_2}$ satisfy \eqref{eq:w2neg} (due to our
assumptions on $\eta_1$,  $\eta_2$ and $t$), and are therefore in the
same block. By transitivity, we see that the original two modules are
in the same block.

If the two modules satisfy \eqref{eq:w1pos}, then then we will again
link these modules via a third. In particular we consider
$\W{N,2(\ee_1w_1+\ee_2w_2+r\ell)-m}{\ee_1,\ee_2}$, where $r\in\Z$
is chosen so that $2(\ee_1w_1+\ee_2w_2+r\ell)-m>0$. Then this module
and $\W{N,m}{\ee_1,\ee_2}$ satisfy \eqref{eq:w1w2pos} and are
therefore in the same block. Moreover,
$\W{N,2(\ee_1w_1+\ee_2w_2+r\ell)-m}{\ee_1,\ee_2}$ and
$\W{N,t}{\eta_1,\eta_2}$ satisfy \eqref{eq:w2neg} and are in the
same block. Therefore the original pair of modules are in the same
block.

If the two modules satisfy \eqref{eq:w2pos} then we use an argument
analogous to the previous paragraph.

Finally, if two modules satisfy \eqref{eq:impossible}, then we again
use a third module to show that the original two are in the same
block. In particular we choose $r\in\Z$ so that
$m-2(\ee_1w_1+r\ell)>0$, then we see that $\W{N,m}{\ee_1,\ee_2}$
and $\W{N,m-2(\ee_1w_1+r\ell)}{-\ee_1,\ee_2}$ are in the same block
(as they satisfy \eqref{eq:w1neg}), and also
$\W{N,m-2(\ee_1w_1+r\ell)}{-\ee_1,\ee_2}$ and
$\W{N,t}{\eta_1,\eta_2}$ are in the same block (as they satisfy
\eqref{eq:w1pos}).

We therefore arrive at the following theorem:

\begin{thm}\label{thm:qw1w2}
  Suppose $q$ is a $2\ell$-th root of unity and $w_1$,
  $w_2\in\Z$. Then for $N\gg\mathrm{max}\{m,t\}$, two cell modules
  $\W{N,m}{\ee_1,\ee_2}$ and $\W{N,t}{\eta_1,\eta_2}$ are in the
  same block if and only if
  $|m-\ee_1w_1-\ee_2w_2|\equiv|t-\eta_1w_1-\eta_2w_2|$ (mod $2\ell$).
\end{thm}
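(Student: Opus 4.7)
The proof splits into necessity and sufficiency. For necessity, I would invoke Corollary \ref{th:cellblockH}: being in the same block forces $\alpha^{(N,m)}_{\ee_1,\ee_2}=\alpha^{(N,t)}_{\eta_1,\eta_2}$, and the analysis performed earlier in this section shows that (under the hypothesis $[N]\neq0$, achieved by globalising first if necessary) this equality is equivalent to exactly one of the master equations \eqref{eq:w1w2neg}--\eqref{eq:impossible} holding. A direct case check verifies that each of these eight conditions can be rewritten as the single congruence $|m-\ee_1w_1-\ee_2w_2|\equiv|t-\eta_1w_1-\eta_2w_2|\pmod{2\ell}$, so the necessary direction is purely bookkeeping with $Z_n$-eigenvalues.

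For sufficiency, the plan is to show that any two modules linked by one of the master equations lie in a common block, and then conclude by transitivity. I would dispatch the cases in two groups. In the direct group are \eqref{eq:trivial}, \eqref{eq:w1neg}, \eqref{eq:w2neg}, \eqref{eq:w1w2pos}: here Theorems \ref{thm:qhom}, \ref{thm:w1hom}, \ref{thm:w2hom} and \ref{thm:w1w2hom} respectively produce a non-zero homomorphism between $W^{(N,m)}_{\ee_1,\ee_2}$ and $W^{(N,t)}_{\eta_1,\eta_2}$ outright (with iteration for \eqref{eq:trivial}).

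The indirect group is \eqref{eq:w1w2neg}, \eqref{eq:w1pos}, \eqref{eq:w2pos} and \eqref{eq:impossible}: here no single theorem applies, but an intermediate module bridges the gap. For instance, for \eqref{eq:w1w2neg} pick $r\in\Z$ so that $m-2(\ee_1w_1+r\ell)>0$ and interpose $W^{(N,m-2(\ee_1w_1+r\ell))}_{-\ee_1,\ee_2}$; it is linked to $W^{(N,m)}_{\ee_1,\ee_2}$ by \eqref{eq:w1neg} via Theorem \ref{thm:w1hom}, and to $W^{(N,t)}_{\eta_1,\eta_2}$ by \eqref{eq:w2neg} via Theorem \ref{thm:w2hom}. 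The other three cases follow the same template, composing two theorems from the direct group to cross the ``gap'' that no single theorem spans. Transitivity of block equivalence then closes the argument.

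The main obstacle is ensuring that the intermediate labels always exist and satisfy the hypotheses of the homomorphism theorems --- in particular the strict positivity and ordering inequalities like $m>m-2(\ee_1w_1+r\ell)>0$, together with the strict parity constraints in the hypotheses of Theorems \ref{thm:w1hom}--\ref{thm:w1w2hom}. This is precisely why one imposes $N\gg\max\{m,t\}$: the large-$N$ regime gives enough room in the label poset $\Lambda_N^+$ to place every intermediate module, and it also permits free use of the globalisation/localisation functors of Proposition \ref{prop:glob} (which involve parameter changes $w_i\mapsto -w_i-1$) without losing modules. Once the block structure is established at the large-$N$ limit, the congruence statement is manifestly invariant under the shifts $(m,\ee_i,w_i)\mapsto(m+\ee_i,-\ee_i,-w_i-1)$ induced by localising, so the formulation of the theorem transfers back to finite $N$ within its range of validity.
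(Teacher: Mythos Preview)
Your overall architecture matches the paper's argument closely: necessity via the $Z_n$-eigenvalue and the master equations, sufficiency by showing each of \eqref{eq:w1w2neg}--\eqref{eq:impossible} yields linkage, with the ``indirect'' cases handled by inserting an intermediate module. The specific bridge you give for \eqref{eq:w1w2neg} is exactly the one in the paper, and the paper likewise treats \eqref{eq:w1pos}, \eqref{eq:w2pos}, \eqref{eq:impossible} by two-step bridges through the direct group.

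There is one concrete gap. You place \eqref{eq:trivial} in the direct group and invoke Theorem~\ref{thm:qhom}, but that theorem has the standing hypothesis $w_1,w_2\notin\Z$, which is precisely violated here. So Theorem~\ref{thm:qhom} is unavailable, and your parenthetical ``(with iteration for \eqref{eq:trivial})'' does not save it. The paper handles \eqref{eq:trivial} instead by the same bridging technique you use elsewhere: choose $r$ with $0<\ee_1w_1+r\ell<\ell$ and apply Theorem~\ref{thm:w1hom} twice to pass
\[
W^{(N,m+2\ell)}_{\ee_1,\ee_2}\longrightarrow W^{(N,m+2\ell-2(\ee_1w_1+r\ell))}_{-\ee_1,\ee_2}\longrightarrow W^{(N,m)}_{\ee_1,\ee_2},
\]
so \eqref{eq:trivial} is really another indirect case. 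Once you make this correction, your proof aligns with the paper's.
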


Figure \ref{fig:w1andw2} shows a plots of the cell modules when both
$w_1$ and $w_2$ are integral. The arrows indicate a homomorphism
between the corresponding modules, and the dashed square indicate that
these modules are in the same block. Note that away from the extremes
of each arm, there is a uniform pattern of concentric squares.

\begin{figure}
  \centering
  \begin{tikzpicture}[scale=0.2,>=latex,baseline=0] 
    \pgfmathsetmacro{\w}{3}
    \pgfmathsetmacro{\ww}{1}
    \draw (0,-16)--(0,16);
    \draw (-16,0)--(16,0);
    \foreach \m in {0,2,4,6,8,10,12,14,16}
    \foreach \e in {-1,1}
    \foreach \f in {-1,1}
    {       
      \draw (\e * \m - \e * \e * \w - \e * \f * \ww,\f * \m - \f * \e * \w - \f * \f * \ww)  node {$\bullet$};
    }
    \fill[white] (0 -\w + \ww, 0 + \w - \ww) circle (15pt);
    \fill[white] (0 -\w - \ww, 0 + \w - \ww) circle (15pt); 
    \draw (0 - \w - \ww, 0 - \w - \ww) -- ++(16,16) node[anchor=south west] {$+,+$};
    \draw (-2 - \w + \ww, 2 + \w - \ww) -- ++(-14,14) node[anchor=south east] {$-,+$};
    \draw (2 - \w + \ww, -2 + \w - \ww) -- ++(14,-14) node[anchor=north west] {$+,-$};
    \draw (-2 - \w - \ww, -2 - \w - \ww) -- ++(-14,-14) node[anchor=north east] {$-,-$};

    \draw[dashed,->] (0,0) .. controls (-3,0) and (0,3) .. (0,0);
    \draw[dashed,->] (6 - \w - \ww, 6 - \w - \ww) -- (6 - \w - \ww,-6 + \w + \ww);
    \draw[dashed,->] (6 - \w - \ww, 6 - \w - \ww) to[out=-90,in=0] (-6 + \w + \ww, -6 + \w + \ww);
    \draw[dashed,->] (8 - \w - \ww, 8 - \w - \ww) -- (8 - \w - \ww,-8 + \w + \ww);
    \draw[dashed,->] (8 - \w - \ww, 8 - \w - \ww) -- (-8 + \w + \ww,8 - \w - \ww);
    \draw[dashed,->] (8 - \w - \ww, 8 - \w - \ww) to[out=180,in=90] (-8 + \w + \ww, -8 + \w + \ww);

    \draw[dashed] (10 - \w - \ww, 10 - \w - \ww) rectangle (-10 + \w + \ww,-10 + \w + \ww);
    \draw[dashed] (12 - \w - \ww, 12 - \w - \ww) rectangle (-12 + \w + \ww,-12 + \w + \ww);
    \draw[dashed] (14 - \w - \ww, 14 - \w - \ww) rectangle (-14 + \w + \ww,-14 + \w + \ww);
    \draw[dashed] (16 - \w - \ww, 16 - \w - \ww) rectangle (-16 + \w + \ww,-16 + \w + \ww);

    \draw[dashed,->] (-10 - \w - \ww, -10 - \w - \ww) -- (10 + \w + \ww, -10 - \w - \ww);
    \draw[dashed,->] (-10 - \w - \ww, -10 - \w - \ww) -- (-10 - \w - \ww, 10 + \w + \ww);

    \draw[dashed,->] (-12 - \w - \ww, -12 - \w - \ww) -- (-12 - \w - \ww, 12 + \w + \ww);
    \draw[dashed,->] (-14 - \w - \ww, -14 - \w - \ww) -- (-14 - \w - \ww, 14 + \w + \ww);
  \end{tikzpicture}
  \caption{Graphical depiction of the cell modules of $b_{13}'$ with $w_1=3$ and $w_2=1$.}\label{fig:w1andw2}
\end{figure}

\section{Linkage via the module $\Wb$}\label{sec:wnb}

In the above we have been working  with cell modules for 
$\bnp := \bnx /I_n(0)$.
This precisely excludes  
the $2^n$-dimensional module $\Wb$. 
We will now deal with linkage via this module
in $\bnx$ and thus complete our investigation into the block structure. 
We begin with the following theorem:

\begin{thm}[{\cite[Theorem 5.17]{degiernichols}}]
  The Gram determinant $\Gamma^{n}_b$ of $\Wb$ 
(with respect to the  
path basis, as defined in (\ref{eq:defdetgamma}))
is given by:
  \begin{itemize}
  \item for $n$ even:
    \[\Gamma^{n}_b=\alpha_n\prod_{m=0}^{(n-2)/2}\left(\prod_{\ee_1,\ee_2,\ee_3=\pm1}[(1+2m+\ee_1w_1+\ee_2w_2+\ee_3\theta)/2]\right)^{\sum_{i=1}^{(n-2m)/2}\binom{n}{(n-2m-2i)/2}};\]
  \item for $n$ odd:
\begin{multline*}
\Gamma^{n}_b=
\alpha_n\left(\prod_{\ee_2,\ee_3=\pm1}[(w_1+\ee_2w_2+\ee_3\theta)/2]\right)^{2^{n-1}}\\
\times
\prod_{m=1}^{(n-1)/2}
\left(\prod_{\ee_1,\ee_2,\ee_3=\pm1}[(2m+\ee_1w_1+\ee_2w_2+\ee_3\theta)/2]
\right)^{\sum_{i=1}^{(n-2m+1)/2}\binom{n}{(n-2m-2i+1)/2}},
\end{multline*}
  \end{itemize}
where $\alpha_n$ is given in both cases by
\[\alpha_n=([w_1][w_2+1])^{-2\sum_{m=0}^{n-1}\left(\sum_{i=1}^{m+1}\binom{n}{m-i+1}\right)},
\]
up to factors that are units under our standing assumptions. 
\end{thm}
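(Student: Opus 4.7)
The plan is to compute $\Gamma^{n}_b$ directly in the path basis, exploiting the fact established in the preceding proposition that the Gram matrix of $\Wb$ is diagonal in this basis with eigenvalues $\lambda_p$ given recursively by Proposition~\ref{prop:patheigenvalues}. Consequently
\[
\Gamma^{n}_b \;=\; \prod_{p \in \PP_n} \lambda_p ,
\]
and each $\lambda_p$ is itself a product of $f(h)$-factors, one for each full tile at base height $h$ in any chosen route $\PP(\po \to p)$, and $g(h)$-factors, one for each right-boundary half-tile at height $h$.

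First I would interchange the order of multiplication and write
\[
\Gamma^{n}_b \;=\; \prod_{h} f(h)^{M_f(h)}\, g(h)^{M_g(h)} ,
\]
where $M_f(h)$ (resp.\ $M_g(h)$) counts, over all $p \in \PP_n$, the total number of full (resp.\ right-boundary half) tiles at height $h$ appearing in the construction of $p$ from $\po$. These multiplicities are straightforward lattice-path counts: a path contributes a half-tile at boundary height $h$ precisely when its final height $h_n$ has $|h_n|\geq |h|$ on the appropriate side of $\po$, and this number is a partial sum of binomial coefficients of the form $\sum_i \binom{n}{(n-2m-2i)/2}$. The interior multiplicity $M_f(h)$ is evaluated by the analogous argument, counting pairs of positions at which the envelope between $p$ and $\po$ reaches height $h$.

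Next I would substitute the explicit forms of $f$ and $g$ obtained from the definitions of $r$ and $k$ together with $[-m]=-[m]$. The factor
\[
f(h) \;=\; \frac{[w_1-h+1]\,[w_1-h-1]}{[w_1-h]^{\,2}}
\]
contributes only ratios whose accumulated contribution telescopes across consecutive values of $h$, leaving a boundary remainder that gathers into the powers of $[w_1]$ inside $\alpha_n$. The factor $g(h)$ is, up to units absorbed into $\alpha_n$, proportional to
\[
\frac{1}{[w_1-h]^{\,2}[w_2+1]^{\,2}}\prod_{\ee,\ee'\in\{\pm 1\}}\!\!\bigl[\tfrac{1}{2}(\ee(w_1-h)-w_2+\ee'\theta)\bigr] ,
\]
and as $h$ runs over even (resp.\ odd) values in the $n$-even (resp.\ $n$-odd) case, these numerator brackets reproduce, after the substitution $h=-2m$ or $h=-2m+1$, exactly the eight-fold product $\prod_{\ee_1,\ee_2,\ee_3}[\tfrac{1}{2}(1+2m+\ee_1w_1+\ee_2w_2+\ee_3\theta)]$ (or the shifted analogue in the odd case) appearing in the statement.

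The main obstacle will be the precise combinatorial identification of $M_g(h)$ with the closed-form binomial sum $\sum_{i=1}^{(n-2m)/2}\binom{n}{(n-2m-2i)/2}$, together with the parity bookkeeping that distinguishes the $n$ even and $n$ odd cases and produces the exceptional exponent $2^{n-1}$ attached to the $m=0$ factor in the odd case (this arises because both boundary heights $h_n=\pm 1$ contribute). This count can be carried out either by a reflection-principle argument for paths with fixed final-height bound, or by induction on $n$ using the decomposition $\PP_n = \PP_{n-1}\uparrow \;\sqcup\; \PP_{n-1}\downarrow$ at the last step. Finally the collected denominator powers of $[w_1]$ and $[w_2+1]$ from both $f(h)$ and $g(h)$ must be shown to yield precisely $\alpha_n = ([w_1][w_2+1])^{-2\sum_{m}\sum_{i=1}^{m+1}\binom{n}{m-i+1}}$, which is a routine but delicate accounting of the same flavour as the Example worked in \S\ref{sec:gram}.
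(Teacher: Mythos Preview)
This theorem is not proved in the present paper at all: it is quoted verbatim from de~Gier--Nichols \cite[Theorem~5.17]{degiernichols} and used as input, with no proof given here. So there is no ``paper's own proof'' to compare your proposal against.

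That said, your outline is broadly the right strategy and mirrors what the paper does for the \emph{other} cell modules in Theorem~\ref{thm:gram}: use diagonality of the Gram matrix in the path basis (Proposition~\ref{prop:patheigenvalues}), write $\Gamma^n_b=\prod_p\lambda_p$, regroup as $\prod_h f(h)^{M_f(h)}g(h)^{M_g(h)}$, and then do the lattice-path counting for the exponents. One caution: your telescoping claim for the $f(h)$ contributions is too optimistic as stated. In the computation of Theorem~\ref{thm:gram} the $f$-factors do not simply telescope away into $\alpha_n$; rather the authors avoid computing them directly by exploiting a left/right symmetry (a second path basis built from the right blob) to pin down the $[w_1-a]$ part. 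For $\Wb$ you have no such shortcut available, so you would genuinely have to evaluate $\prod_h f(h)^{M_f(h)}$ and show the surviving factors are all units under the standing assumptions --- this is where the real work lies, and your sketch does not yet indicate how to control the cancellation.
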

Assuming $\alpha_n\neq0$, we therefore see that the module $\Wb$
is irreducible unless $\theta$ is congruent to
$\pm(-m+\ee_1w_1+\ee_2w_2)$ modulo $2\ell$ for some integer $m$. Since
this module has 
label larger than all the other cell modules in the poset ordering,
being irreducible implies that it is alone in its block. (This remains
true even in the non-quasi-hereditary case as this module has
dimension larger than all the other cell modules.)
We will say that if  $\theta\equiv\pm(-m+\ee_1w_1+\ee_2w_2)$ (mod
$2\ell$) then $\theta$ is \emph{critical}.
If $\theta$ is not critical, then the module $\Wb$ is always a
singleton block and the algebra $b_n^x$ has the same blocks as $b_n'$
with the added singleton block. 
Therefore
we henceforth suppose that $\theta$ is critical.

Suppose that we have two non-isomorphic submodules
$\W{n,m}{\epsilon_1, \epsilon_2}$,
$\W{n,t}{\eta_1, \eta_2}$ of $\Wb$. These each correspond to a
singular factor of the gram determinant, $\Gamma^{n}_b$.
The condition for two of the factors in the gram determinant to be
equal to zero is the same condition for the central element $Z_n$ to
act by the same constant on the modules  $\W{n,m}{\ee_1, \ee_2}$ and
$\W{n,t}{\eta_1, \eta_2}$ with the added condition
that $[\left(m-\ee_1w_1 -\ee_2w_2 + \theta\right)/2] =0$. This is not so
surprising as if the modules 
$\W{n,m}{\epsilon_1, \epsilon_2}$,
$\W{n,t}{\eta_1, \eta_2}$  appear in the indecomposable module
$\Wb$ they must be in the same block.  This does mean, that a
careful rereading of the proofs of the blocks for the algebra
$b'_n$ will give the blocks for the full algebra $b_n^x$.

We have several cases. In each case, the blocks for $b_n^x$ are the
same as for the algebra $b'_n$, except for the block that contains
$\Wb$. In most cases, the module now joins two blocks from $b'_n$
together, except for the case where the modules that have the same
action by the central element, already form a block. 

To state our result we will only specify the block that can change,
i.e. the one that contains the module $\Wb$. \emph{The blocks that
don't contain $\Wb$ remain the same as for $b'_n$.}

\begin{thm}\label{thm:blocksbnx}
The block of the symplectic blob algebra, $b_n^x$, containing
$\Wb$ when $\theta$ has
a critical value, $\theta = -m + \epsilon_1w_1 + \epsilon_2w_2$,  is as follows:
\begin{enumerate}
\item[(i)]
If $q$ is not a root of unity and none of $w_1$, $w_2$, $w_1 \pm w_2$
are integral, then the only non-singleton block is the one containing
$\Wb$ and then $\Wb$ and $\W{n,m}{\epsilon_1, \epsilon_2}$ are in the same block.   
\item[(ii)]
If $q$ is a primitive  $2\ell$-th root of unity and none of $w_1$, $w_2$, $w_1 \pm w_2$
are integral, 
then the module $\Wb$ is in the same block as all
$\W{n,t}{\eta_1, \eta_2}$  with $\ee_1=\eta_1$, $\ee_2=\eta_2$ and
$m\equiv t$ (mod $2\ell$) and all 
$\W{n,t'}{\eta'_1, \eta'_2}$  with $\ee_1=-\eta'_1$, $\ee_2=-\eta'_2$ and
$m\equiv -t'$ (mod $2\ell$).
\item[(iii)]
If
$w_1+w_2 \in \Z$  and
none of $w_1$, $w_2$, $w_1 - w_2$
are integral, then 
the module $\Wb$ is in the same block as all
$\W{n,t}{\eta_1, \eta_2}$  with $\ee_1=\eta_1=\ee_2=\eta_2$ and
$|m-\ee_1w_1-\ee_2w_2| \equiv |t-\eta_1w_1-\eta_2w_2|$ (mod $2\ell$),
and all 
$\W{n,t'}{\eta'_1, \eta'_2}$  with $\ee_1=-\eta'_1=\ee_2=-\eta'_2$ and
$|m-\ee_1w_1-\ee_2w_2| \equiv |t-\eta'_1w_1-\eta'_2w_2|$ (mod $2\ell$),
where the $q$ is a primitive $2\ell$-th root of unity,
or the congruence is an equality if $q$ is not an $2\ell$-th root of
unity. 
\item[(iv)]
If 
$w_1-w_2 \in \Z$  and
none of $w_1$, $w_2$, $w_1 + w_2$
are integral, then 
the module $\Wb$ is in the same block as all
$\W{n,t}{\eta_1, \eta_2}$  with $\ee_1=\eta_1=-\ee_2=-\eta_2$ and
$|m-\ee_1w_1-\ee_2w_2| \equiv |t-\eta_1w_1-\eta_2w_2|$ (mod $2\ell$),
and all 
$\W{n,t'}{\eta'_1, \eta'_2}$  with $\ee_1=-\eta'_1=-\ee_2=\eta'_2$ and
$|m-\ee_1w_1-\ee_2w_2| \equiv |t-\eta'_1w_1-\eta'_2w_2|$ (mod $2\ell$),
where the $q$ is a primitive $2\ell$-th root of unity,
or the congruence is an equality if $q$ is not an $2\ell$-th root of
unity. 
\item[(v)]
If $w_1+w_2$ and
$w_1-w_2 \in \Z$  and
none of $w_1$, $w_2$,
are integral, then 
the module $\Wb$ is in the same block as all
$\W{n,t}{\eta_1, \eta_2}$  with 
$|m-\ee_1w_1-\ee_2w_2| \equiv |t-\eta_1w_1-\eta_2w_2|$ (mod $2\ell$),
where the $q$ is a primitive $2\ell$-th root of unity,
or the congruence is an equality if $q$ is not an $2\ell$-th root of
unity. 
\item[(vi)]
If
$w_1\in \Z$  or $w_2 \in \Z$  (but not both) and
none of $w_2$, $w_1 \pm w_2$
are integral, then 
the module $\Wb$ is in the same block as all
$\W{n,t}{\eta_1, \eta_2}$  with 
$|m-\ee_1w_1-\ee_2w_2| \equiv |t-\eta_1w_1-\eta_2w_2|$ (mod $2\ell$),
where the $q$ is a primitive $2\ell$-th root of unity,
or the congruence is an equality if $q$ is not an $2\ell$-th root of
unity. 
\item[(vii)]
If both $w_1$, $w_2 \in \Z$
then the module $\Wb$ is in the same block as all
$\W{n,t}{\eta_1, \eta_2}$  with 
$|m-\ee_1w_1-\ee_2w_2| \equiv |t-\eta_1w_1-\eta_2w_2|$ (mod $2\ell$),
where the $q$ is a primitive $2\ell$-th root of unity,
or the congruence is an equality if $q$ is not an $2\ell$-th root of
unity. (This is the only case where the module $\Wb$ does not
join two blocks from $b'_n$.)
\end{enumerate}
\end{thm}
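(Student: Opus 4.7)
The plan is to parlay the $b_n'$-block results already established in Theorems~\ref{thm:semisimple}--\ref{thm:qw1w2} into a block description for $b_n^x$, the only new ingredient being how the module $W^{(n)}(b)$, excised in passing to $b_n' = b_n^x/I_n(0)$, is linked to the rest. Since $W^{(n)}(b)$ is the maximal element of the cell poset, the first observation is that when $\theta$ is not critical the quoted formula for $\Gamma^n_b$ is non-zero (granting $\alpha_n\neq 0$), hence $W^{(n)}(b)$ is simple and forms a singleton block; thus the blocks of $b_n^x$ coincide with those of $b_n'$ together with this extra singleton. This case is not covered by the statement but serves as the baseline.

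Now suppose $\theta = -m+\ee_1 w_1+\ee_2 w_2$ is critical. Proposition~\ref{prop:moduleV} and Theorem~\ref{thm:isomorphism} force $W^{(n)}(b)$ to contain a copy of $W^{(n,m)}_{\ee_1,\ee_2}$ as a submodule, so $W^{(n)}(b)$ and $W^{(n,m)}_{\ee_1,\ee_2}$ are in the same block. Any further cell module $W^{(n,t)}_{\eta_1,\eta_2}$ in this block must share the $Z_n$-eigenvalue $\alpha^{(n,m)}_{\ee_1,\ee_2}$, by Corollary~\ref{th:cellblockH}. The converse linkage is supplied by the Gram determinant: as flagged in the passage preceding the statement, vanishing of a factor of $\Gamma^n_b$ indexed by $(t,\eta_1,\eta_2)$ is governed by exactly the $Z_n$-matching equations \eqref{eq:w1w2neg}--\eqref{eq:impossible} augmented by the constraint $[(t-\eta_1 w_1-\eta_2 w_2\pm\theta)/2]=0$, and each such vanishing factor forces $W^{(n,t)}_{\eta_1,\eta_2}$ to occur as a composition factor of the indecomposable module $W^{(n)}(b)$. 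Consequently the block of $W^{(n)}(b)$ in $b_n^x$ is precisely the $Z_n$-eigenvalue class determined by $(m,\ee_1,\ee_2,\theta)$.

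It then remains to traverse the cases (i)--(vii) and compare this eigenvalue class against the $b_n'$-blocks already catalogued. The master equations typically split the class into two sub-classes: one of aligned-sign type (governed by $m-t$) and one of flipped-sign type (governed by $m+t$). In cases (i)--(vi) these two sub-classes are distinct $b_n'$-blocks, and $W^{(n)}(b)$ merges them into a single block of $b_n^x$, yielding the twinned descriptions given in the theorem. In case (vii), where $w_1,w_2\in\Z$, Theorem~\ref{thm:qw1w2} already showed the entire $Z_n$-class forms a single $b_n'$-block, so $W^{(n)}(b)$ simply joins this pre-existing block, which is precisely the exception flagged in the statement. The main obstacle is the bookkeeping: verifying that the singular factors of $\Gamma^n_b$ at critical $\theta$ exhaust the $Z_n$-class on the nose, and then matching each of the seven enumerated descriptions against the appropriate solution set of \eqref{eq:w1w2neg}--\eqref{eq:impossible} under the stated integrality hypothesis, paying particular attention to which of the sign-flipped cell modules actually exist for the given $n$.
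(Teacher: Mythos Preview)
Your overall strategy matches the paper's: use the $b_n'$-block results of Theorems~\ref{thm:semisimple}--\ref{thm:qw1w2}, note that $W^{(n,m)}_{\ee_1,\ee_2}$ embeds in $W^{(n)}(b)$ via Proposition~\ref{prop:moduleV}, and then determine which additional $b_n'$-blocks get merged. However, the step where you assert that ``each such vanishing factor forces $W^{(n,t)}_{\eta_1,\eta_2}$ to occur as a composition factor of the indecomposable module $W^{(n)}(b)$'' is looser than what the paper actually does and is not clearly justified. Vanishing of a factor of $\Gamma^n_b$ only tells you the contravariant form is degenerate; the composition-factor inference you want really comes from Proposition~\ref{prop:moduleV} itself: for the fixed critical $\theta$, any $(t,\eta_1,\eta_2)$ with $[(-t+\eta_1 w_1+\eta_2 w_2 \pm \theta)/2]=0$ yields an explicit \emph{submodule} $V^{(n,t)}_{\eta_1,\eta_2}\cong W^{(n,t)}_{\eta_1,\eta_2}$ of $W^{(n)}(b)$. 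That is the mechanism, and you should say so directly.

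The paper's proof is in fact more economical than yours. Rather than arguing that \emph{every} module in the $Z_n$-class is a composition factor of $W^{(n)}(b)$, it exhibits in each of cases (ii)--(vi) just \emph{one} further explicit submodule of $W^{(n)}(b)$ --- chosen to lie in the second $b_n'$-block that needs merging --- and then invokes transitivity via the relevant $b_n'$-block theorem. For instance in case (ii) it picks $a\in\Z$ with $0\le 2a\ell-m\le n$, checks that $[(2a\ell-m+\ee_1 w_1+\ee_2 w_2-\theta)/2]=0$, and concludes $W^{(n,2a\ell-m)}_{-\ee_1,-\ee_2}$ sits inside $W^{(n)}(b)$; Theorem~\ref{thm:qrootofunity} then does the rest. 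What you dismiss as ``bookkeeping'' is actually the substance of the argument: identifying, for each parameter regime, the correct second $(t,\eta_1,\eta_2)$ and verifying it satisfies the $\theta$ equation so that Proposition~\ref{prop:moduleV} applies. Your high-level picture is right, but the proof is genuinely case-by-case and the existence check (that the second submodule actually lives in the range $0<t\le n$) is not automatic.
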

\begin{proof}
(i). Clear as all $-m+\ee_1w_1+\ee_2w_2 -\theta$ are distinct in this
case and considering the Gram determinant of $\Wb$.

(ii). The module $\W{n,m}{\ee_1,\ee_2}$ embeds into $\Wb$, by
the assumption on $\theta$. Thus $\Wb$ is in the same block as
all $\W{n,t}{\eta_1,\eta_2}$ that are linked to
  $\W{n,m}{\ee_1,\ee_2}$. 
Pick $a \in \Z$ such that $0 \le 2a\ell-m \le n$. 
Then $[(2a\ell-m+\ee_1 w_1 + \ee_2w_2 - \theta)/2] =0$, so
$\W{n,2a\ell-m}{-\ee_1, -\ee_2}$ is a submodule of $\Wb$. Hence
$\Wb$ is in the same block as all 
all $\W{n,t)}{\eta'_1,\eta'_2}$ that are linked to
  $\W{n,2a\ell-m}{-\ee_1,-\ee_2}$. 

Thus using the proof of Theorem~\ref{thm:qrootofunity}, the block
containing $\Wb$ is the same as the solutions to
equations \eqref{eq:trivial} and \eqref{eq:impossible}.

(iii). As in (ii), $\Wb$ is in the same block as 
$\W{n,m}{\ee_1,\ee_2}$ and this gives the first condition.
Pick $a \in \Z$ such that $0 \le 2a\ell+m -2\ee_1(w_1+w_2) \le n$. 
(NB: $\ell$ is taken to be zero if $q$ is not a primitive $2\ell$-th root of
unity. Of course, then such an $a$ may not exist, but then there are
no further solutions to the equations that need to be considered.)
Consider $t=m-2\ee_1(w_1+w_2)$ and
the module $\W{n,t}{-\ee_1, -\ee_1}$. 
For this module 
$[(t+\ee_1 w_1 + \ee_2w_2 + \theta)/2] =0$, so $\W{n,t}{-\ee_1, -\ee_1}$
is a submodule of $\Wb$ and thus they are in the same block. 
This gives the second condition using Theorem \ref{thm:w1+w2}. 
These two conditions combined give all solutions to equations
\eqref{eq:w1w2neg}, \eqref{eq:trivial}, \eqref{eq:w1w2pos} and
\eqref{eq:impossible}, thus this is the whole block.

(iv). This is similar to (iii).

(v). This merges (iii) and (iv) and says that the block is determined
by the action of the central element. 

(vi). As in (ii), $\Wb$ is in the same block as 
$\W{n,m}{\ee_1,\ee_2}$. 
Pick $a \in \Z$ such that $0 \le 2a\ell-m +2\ee_1w_1 \le n$. 
(NB: $l$ is taken to be zero if $q$ is not a primitive $2\ell$-th root of
unity. Of course, then such an $a$ may not exist, but then there are
no further solutions to the equations that need to be considered.)
Consider $t= -m+2\ee_1w_1$ and
the module $\W{n,t}{\ee_1, -\ee_2}$. 
For this module 
$[(-t+\ee_1 w_1 - \ee_2w_2 + \theta)/2] =0$, so $\W{n,t}{\ee_1, -\ee_2}$
is a submodule of $\Wb$ and thus they are in the same block. 
This gives the condition as stated using Theorem \ref{thm:w1int},
which is the same as the condition for the central element to act by
zero.

(vii).
 Clear as the block is already determined by the action of the
central element.
\end{proof}

\appendix
\section{Reduction to Hom spaces}

It is well known that we may identify the blocks of a finite
dimensional algebra with the connected components of the $\Ext^1$
quiver between simple
modules.
Here we prove that finding these connected components is equivalent to
determining the connected components of the $\Hom$ quiver between
standards.
Thus in determining the blocks, it is only necessary to compute enough
homomorphisms between standard modules in order to find these
connected components.

(Here by the $\Hom$ quiver between standards, we mean take the quiver
whose vertices are labelled by standard modules, $\Delta(\mu)$ and
with the number of
arrows from $\lambda$ to $\mu$ equal to the dimension of
$\Hom(\Delta(\lambda), \Delta(\mu))$.)
 
\begin{prop} 
Let $A$ be a quasi-hereditary algebra with a simple preserving duality
and poset $(\Lambda, \le)$. For $\lambda \in \lambda$, let the
standard modules be
denoted by $\Delta(\lambda)$, costandards by $\nabla(\lambda)$, the
principal indecomposable modules by $P(\lambda)$, the irreducible head
of this module by
$L(\lambda)$ and the indecomposable injective hulls by $I(\lambda)$.

The blocks of $A$ may be identified with the connected components of
the $\Hom$ quiver between standards.
\end{prop}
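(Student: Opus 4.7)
The plan is to prove the two inclusions $\sim\subseteq\approx$ and $\approx\subseteq\sim$ separately, where $\sim$ denotes the connected-component relation in the $\Hom$-quiver between standards and $\approx$ denotes the block equivalence on $\Lambda$. One direction is essentially automatic: each $\Delta(\lambda)$ is indecomposable, because it has simple head $L(\lambda)$, so any non-zero map $\Delta(\lambda)\to\Delta(\mu)$ forces both modules, and therefore their heads $L(\lambda)$ and $L(\mu)$, into one block. This handles a single arrow, and connected components follow by transitivity, giving $\sim\subseteq\approx$.

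For the reverse inclusion I would reduce to a statement purely about composition factors of a single standard. Blocks are generated by the $\Ext^1$-linkage of simples, and if $\Ext^1(L(\lambda),L(\mu))\ne 0$ then $L(\mu)$ is a composition factor of $\mathrm{rad}\,P(\lambda)$; decomposing $P(\lambda)$ along its $\Delta$-filtration and combining BGG reciprocity $[P(\lambda):\Delta(\tau)]=[\nabla(\tau):L(\lambda)]$ with the simple preserving duality $[\nabla(\tau):L(\lambda)]=[\Delta(\tau):L(\lambda)]$ produces one $\tau$ such that both $L(\lambda)$ and $L(\mu)$ are composition factors of $\Delta(\tau)$. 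Iterating, the block equivalence is generated by the relation ``$L(\alpha)$ and $L(\beta)$ are common composition factors of some $\Delta(\tau)$''. So it suffices to prove: whenever $L(\mu)$ is a composition factor of $\Delta(\tau)$, we have $\mu\sim\tau$.

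To establish this I would induct on the socle depth of $L(\mu)$ inside $\Delta(\tau)$. The base case is when $L(\mu)\hookrightarrow\soc\Delta(\tau)$: then the composite $\Delta(\mu)\twoheadrightarrow L(\mu)\hookrightarrow\Delta(\tau)$ is a non-zero homomorphism, so $\mu\sim\tau$ directly. For the inductive step, lift a composition-series generator of the copy of $L(\mu)$ to obtain a cyclic submodule $Ax\subseteq\Delta(\tau)$; since cyclic modules in a basic algebra have simple head, the head of $Ax$ is forced to be $L(\mu)$, and the proper submodule $\mathrm{rad}(Ax)$ has strictly smaller socle depth for each of its composition factors. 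The inductive hypothesis links each of those factors to $\tau$, and one applies the simple preserving duality to transport the corresponding chain of $\Hom$s between standards into one that reaches $\Delta(\mu)$ via $\Hom(\Delta(\lambda),\Delta(\mu))\cong\Hom(\nabla(\mu),\nabla(\lambda))$.

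The main obstacle is precisely this inductive step: a composition factor $L(\mu)$ sitting deep inside $\Delta(\tau)$ need not admit a direct homomorphism $\Delta(\mu)\to\Delta(\tau)$, and the cyclic submodule $Ax$ produced in the induction is in general not a quotient of $\Delta(\mu)$ (its composition factors need not all lie below $\mu$ in the order). Simple preserving duality is what saves the argument, because it lets one swap the roles of $\Delta$ and $\nabla$, replace a troublesome inclusion into $\Delta(\tau)$ by a surjection from $\nabla(\tau)$, and so convert ``deep'' composition-factor data into actual Hom-arrows along which the induction can run. Once the reduced claim is in hand, the chain of standards $\mu\sim\tau$ combined with the composition-factor generation of $\approx$ yields $\approx\subseteq\sim$, completing the identification of blocks with connected components of the $\Hom$-quiver between standards.
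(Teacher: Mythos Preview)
Your overall strategy---reduce block linkage to the statement ``$[\Delta(\tau):L(\mu)]\neq 0 \Rightarrow \mu\sim\tau$'' and then prove the latter by some induction---is the same as the paper's. The easy direction and the reduction step are fine (though note that the paper's reduction is simpler: from $\Ext^1(L(\lambda),L(\mu))\neq 0$ and duality one may assume $\lambda\geq\mu$, and then the nonsplit extension is already a quotient of $\Delta(\lambda)$, so one may take $\tau=\lambda$).

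There is, however, a genuine gap in your inductive step. A minor point first: the claim that cyclic modules over a basic algebra have simple head is false in general; but you do not really need it, since taking $x\in\soc_d(\Delta(\tau))$ lifting the chosen copy of $L(\mu)$ gives $Ax\subseteq\soc_d$ with $Ax/(Ax\cap\soc_{d-1})\cong L(\mu)$, so $\mathrm{rad}(Ax)\subseteq\soc_{d-1}$ and the socle-depth induction is well founded.

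The real problem is the last sentence of the inductive step. You obtain $\nu\sim\tau$ for every composition factor $L(\nu)$ of $\mathrm{rad}(Ax)$, but to conclude $\mu\sim\tau$ you still need $\mu\sim\nu$ for some such $\nu$. The duality isomorphism $\Hom(\Delta(\lambda),\Delta(\mu))\cong\Hom(\nabla(\mu),\nabla(\lambda))$ only rewrites existing arrows; it does not manufacture an arrow between $\mu$ and any $\nu$. All you know is that $L(\mu)$ sits on top of some $L(\nu)$ inside $Ax$, i.e.\ $\Ext^1(L(\mu),L(\nu))\neq 0$, and invoking your reduction again would be circular (the new $\Delta(\sigma)$ produced need not be $\Delta(\tau)$, and socle depth in $\Delta(\sigma)$ is unrelated to socle depth in $\Delta(\tau)$).

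The paper closes this gap with a different inductive variable: a strictly descending chain in the poset $\Lambda$. If $\Hom(\Delta(\mu),\Delta(\tau))=0$, pick $\nu$ \emph{maximal} among the weights of composition factors of the image $M$ of $P(\mu)\to\Delta(\tau)$ with $\nu\not\leq\mu$. Maximality forces the cyclic submodule of $M$ with head $L(\nu)$ to be a quotient of $\Delta(\nu)$, so $\Hom(\Delta(\nu),\Delta(\tau))\neq 0$. On the other hand $L(\nu)$ is not a composition factor of $\Delta(\mu)$, so $\Delta(\nu)$ must occur in the $\Delta$-filtration of the kernel of $P(\mu)\twoheadrightarrow\Delta(\mu)$; BGG reciprocity and the simple-preserving duality then give $[\Delta(\nu):L(\mu)]\neq 0$. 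Since $\nu<\tau$ strictly, one may now replace $\tau$ by $\nu$ and iterate; finiteness of $\Lambda$ terminates the process with a chain of Hom-arrows from $\tau$ down to some weight admitting a direct map from $\Delta(\mu)$. This is where duality is actually used---to convert ``$\Delta(\nu)$ appears in $P(\mu)$'' into ``$L(\mu)$ appears in $\Delta(\nu)$''---not to transport chains of Hom-arrows.
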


\begin{proof} 
Now if $\Hom(\Delta(\lambda), \Delta(\mu))$ is non-zero then
$L(\lambda)$ must be a composition factor of $\Delta(\mu)$ as
$\Delta(\lambda)$ has simple head
$L(\lambda)$. Thus $L(\lambda)$ and and $L(\mu)$ are in the same
block. Thus the connected components of the $\Ext^1$ quiver on simples
are disjoint unions
of the connected components of the $\Hom$ quiver on standards.

We now prove the converse, that the connected components of the $\Hom$
quiver on standards are disjoint unions of the the $\Ext^1$ quiver on
simples.  Let
$\lambda$ and $\mu \in \Lambda$ with $\Ext^1(L(\lambda),
L(\mu))$. Without loss of generality we may assume that $\lambda \ge \mu$ as $A$ has a
simple preserving duality (and
hence $\Ext^1(L(\lambda),L(\mu)) = \Ext^1(L(\mu),L(\lambda))$).  Since
the extension of $L(\lambda)$ by $L(\mu)$ has simple socle and
$\lambda \ge \mu$,
this extension must be a quotient of $\Delta(\lambda)$ and in
particular $[\Delta(\lambda):L(\mu)] \ne 0$.  Thus $\Hom(P(\mu),
\Delta(\lambda))$ is
non-zero, as the dimension of $\Hom(P(\mu), \Delta(\lambda))$ is equal
to $[\Delta(\lambda):L(\mu)]$.

Now if $\Hom(\Delta(\mu), \Delta(\lambda))$ is non-zero then we are
done, so suppose that $\Hom(\Delta(\mu), \Delta(\lambda))=0$.  As
$\Hom(P(\mu),
\Delta(\lambda))$ is non-zero, there is a submodule of
$\Delta(\lambda)$, $M$, which is a quotient of $P(\mu)$ and hence has
simple head $L(\mu)$. As
$\Hom(\Delta(\mu), \Delta(\lambda))$ is zero, this quotient $M$ must
contain a composition factor, $L(\nu)$ say, for which $\nu \not \le
\mu$. Let $L(\nu)$
be a largest composition factor in $M$.

Now as $M$ is a submodule of $\Delta(\lambda)$ we must have that $\nu
< \lambda$. (It cannot be equal as then $M$ would be the whole of
$\Delta(\lambda)$
contradicting $\Hom(\Delta(\mu), \Delta(\lambda)) =0$.)  Now take the
largest submodule of $M$ with $L(\nu)$ as its simple head.  As $\nu$
was maximal, this
submodule has composition factors strictly less than $\nu$ and hence
is a quotient of $\Delta(\nu)$ and so $\Hom(\Delta(\nu),
\Delta(\lambda)) \ne 0$.

Also, as $M$ is not a quotient of $\Delta(\mu)$, it must contain a
proper submodule $N$ which is is a quotient of the kernel $Q$, of the
projection map from
$P(\mu)$ to $\Delta(\mu)$.  This proper submodule $N$ must contain the
$L(\nu)$ as $L(\nu)$ is not a composition factor of $\Delta(\mu)$. As
$\nu$ is
maximal, this $\nu$ must be the head of some $\Delta$ appearing in a
$\Delta$-filtration of $Q$. I.e. $\Delta(\nu)$ is a section of $Q$.
Thus $\Hom(P(\mu),
\nabla(\nu))$ is nonzero. Using the duality we then have
$\Hom(\Delta(\nu), I(\mu))\ne 0$, which implies that $L(\mu)$ is a
composition factor of
$\Delta(\mu)$ and thus that $\Hom(P(\mu), \Delta(\nu))$ is non-zero.

If $\Hom(\Delta(\mu), \Delta(\nu))$ is non-zero we may stop, otherwise
we repeat the argument until we have a chain of $\nu_i$'s with
$\lambda > \nu_1 >
\nu_2 >\dots > \nu_m $ and $\Hom(\Delta(\nu_i), \Delta(\nu_{i+1})) \ne
0$. Since $\Lambda$ is finite this chain must stop eventually with
$\Hom(\Delta(\nu_m), \Delta(\mu)) \ne 0$.  We may thus conclude that
$\lambda$ and $\mu$ are in the same connected component of the $\Hom$
quiver on
standards.
\end{proof}

\providecommand{\bysame}{\leavevmode\hbox to3em{\hrulefill}\thinspace}
\providecommand{\MR}{\relax\ifhmode\unskip\space\fi MR }
% \MRhref is called by the amsart/book/proc definition of \MR.
\providecommand{\MRhref}[2]{%
  \href{http://www.ams.org/mathscinet-getitem?mr=#1}{#2}
}
\providecommand{\href}[2]{#2}

\end{document}